\theoremstyle{plain}
\newtheorem{thm}{Theorem}[section]
\newtheorem{prop}[thm]{Proposition}
\newtheorem{lemma}[thm]{Lemma}
\newtheorem{cor}[thm]{Corollary}
\theoremstyle{definition}
\newtheorem{definition}[thm]{Definition}
\theoremstyle{remark}
\newtheorem{remark}[thm]{Remark}
\newtheorem{example}[thm]{Example}
\newtheorem*{ack}{Acknowledgements}
\newcommand{\End}{\mathrm{End}}
\newcommand{\Aut}{\mathrm{Aut}}
\newcommand{\Mat}{\mathrm{Mat}}
\newcommand{\eqdef}{\stackrel{{\rm def.}}{=}}
\DeclareFontFamily{U}{rsf}{}
\DeclareFontShape{U}{rsf}{m}{n}{<5> <6> rsfs5 <7> <8> <9> rsfs7 <10-> rsfs10}{}
\DeclareMathAlphabet\Scr{U}{rsf}{m}{n}
\def\C{\mathbb{C}}
\def\R{\mathbb{R}}
\def\dd{\mathrm{d}}
\def\vol{\mathrm{vol}}
\def\Stab{\mathrm{Stab}}
\def\Diff{\mathrm{Diff}}
\def\Conf{\mathrm{Conf}}
\def\Sol{\mathrm{Sol}}
\newcommand{\be}{\begin{equation*}}
\newcommand{\ee}{\end{equation*}}
\newcommand{\ben}{\begin{equation}}
\newcommand{\een}{\end{equation}}
\newcommand{\beqa}{\begin{eqnarray*}}
	\newcommand{\eeqa}{\end{eqnarray*}}
\newcommand{\beqan}{\begin{eqnarray}}
\newcommand{\eeqan}{\end{eqnarray}}
\newcommand{\nn}{\nonumber}
\newcommand{\Tr}{\mathrm{Tr}}
\def\cR{{\mathcal R}}
\def\cB{\Scr B}
\def\cH{\mathcal{H}}
\def\Cl{\mathrm{Cl}}
\def\Spin{\mathrm{Spin}}
\def\Spin{\mathrm{Spin}}
\def\SO{\mathrm{SO}}
\def\U{\mathrm{U}}
\def\cD{\mathcal{D}}
\def\cJ{\mathcal{J}}
\def\cE{\mathcal{E}}
\def\cI{\mathcal{I}}
\def\cN{\mathcal{N}}
\def\cG{\mathcal{G}}
\def\cT{\mathcal{T}}
\def\cF{\mathcal{F}}
\def\SU{\mathrm{SU}}
\def\Sp{\mathrm{Sp}}
\def\G_2{\mathrm{G_2}}
\def\cL{\mathcal{L}}
\def\cS{\mathcal{S}}
\def\cV{\mathcal{V}}
\newcommand{\Hom}{{\rm Hom}}
\newcommand{\Iso}{{\rm Iso}}
\def\Aut{\mathrm{Aut}}
\def\Re{\mathrm{Re}}
\def\Im{\mathrm{Im}}
\def\G{\mathrm{G}}
\def\R{\mathbb{R}}
\def\cM{\mathcal{M}}
\def\cL{\mathcal{L}}
\def\cH{\mathcal{H}}
\def\dd{\mathrm{d}}
\setlist[itemize]{leftmargin=*}
\newcolumntype{P}[1]{>{\centering\arraybackslash}p{#1}}
\begin{document}

\title[Geometric supergravity and electromagnetic duality]{Four-dimensional geometric supergravity and electromagnetic duality: a brief guide for mathematicians}


\author[C. I. Lazaroiu]{C. I. Lazaroiu} \address{Department of Theoretical Physics, Horia Hulubei National Institute for Physics and Nuclear Engineering, Bucharest, Romania}
\email{lcalin@theory.nipne.ro}

\author[C. S. Shahbazi]{C. S. Shahbazi} \address{Department of Mathematics, University of Hamburg, Germany}
\email{carlos.shahbazi@uni-hamburg.de}

\thanks{2010 MSC. Primary: 53C27. Secondary: 53C50.}
\keywords{Electromagnetic duality, supergravity, symplectic vector bundles, character varieties}

\begin{abstract} 
We give a gentle introduction to the global geometric formulation of the bosonic sector of four-dimensional supergravity on an oriented four-manifold $M$ of arbitrary topology, providing a geometric characterization of its U-duality group. The geometric formulation of four-dimensional supergravity is based on a choice of a vertically Riemannian submersion $\pi$ over $M$ equipped with a flat Ehresmann connection, which determines the non-linear \emph{section} sigma model of the theory, and a choice of flat symplectic vector bundle $\cS$ equipped with a positive complex polarization over the total space of $\pi$, which encodes the inverse gauge couplings and theta angles of the theory and determines its gauge sector. The classical fields of the theory consist of Lorentzian metrics on $M$, global sections of $\pi$ and two-forms valued in $\cS$ that satisfy an algebraic relation which defines the notion of \emph{twisted} self-duality in four Lorentzian dimensions. We use this geometric formulation to investigate the group of \emph{electromagnetic duality transformations} of supergravity, also known as the continuous classical U-duality group, which we characterize using a certain short exact sequence of automorphism groups of vector bundles. Moreover, we discuss the general structure of the Killing spinor equations of four-dimensional supergravity, providing several explicit examples and remarking on a few open mathematical problems. This presentation is aimed at mathematicians working in differential geometry.
\end{abstract}
 


\maketitle

\setcounter{tocdepth}{1} 
\tableofcontents


\section{Introduction}


Supergravity theories are supersymmetric theories of gravity which,
aside from their intrinsic phenomenological interest, are of fundamental importance in
high energy physics, since they describe the low-energy limit of string
and M-theory and their associated supersymmetric compactifications
\cite{BeckerBecker,Castellani:1991et,FreedmanProeyen,Grana:2005jc,Ortin}. In
addition to their role in theoretical physics, supergravity theories
have been the source of important developments and activity in
mathematics, especially in geometry and topology, see for instance
\cite{Freed,Friedrich:2001nh,Garcia-Fernandez:2016azr,Hitchin,Moore,YauProp}
as well as their references and citations. Indeed, the local formulation of
supergravity is well-known to involve important mathematical
structures and objects interacting in a delicate equilibrium dictated
by supersymmetry, such as K\"ahler-Hodge manifolds, Riemannian
manifolds of special holonomy, harmonic maps, exceptional Lie groups,
gerbes and Courant algebroids or gauge-theoretic moduli spaces, to
name only a few. This makes the mathematical study of supergravity
into a rich and quite formidable endeavour. Supergravity theories can
be defined in various dimensions and signatures and can be deformed
through various mechanisms while preserving their supersymmetric
structure (see \cite{Trigiante:2016mnt,Gallerati:2016oyo} and
references therein for details on the deformation of such theories
through \emph{gauging}). In this short review, we will consider
exclusively four-dimensional \emph{ungauged} supergravity theories in
Lorentzian signature, where the term ungauged indicates that we will not
consider any gauging of the theory. Such theories are particularly
relevant for several reasons, both from the physical and mathematical
point of view, among which we can mention the following:

\begin{itemize}
\item Four-dimensional supergravity theories describe the effective
dynamics of the massless modes of string and M-theory
compactifications to four-dimensions, which is the observed physical
dimension of spacetime and therefore yields the adequate set-up for
phenomenological applications \cite{BeckerBecker,Grimm:2005fa}.
	
\item Supergravity theories in four-dimensions enjoy a type of duality
called \emph{electromagnetic U-duality}, which is inherited from
ordinary electromagnetic duality in four Lorentzian dimensions and
has deep connections with string theory U-duality groups
\cite{Hull:1994ys}. Furthermore, electromagnetic duality gives rise to
interesting mathematical structures of gauge-theoretic type \cite{LazaroiuShahbazi}.

\item Four-dimensional supergravity theories involve rich non-linear
sigma models with Riemannian target spaces of special type \cite{FreedmanProeyen,Ortin,Huebscher:2006mr}, whose
moduli spaces of solutions can be expected to enjoy interesting
applications in the differential topology of Riemannian three and four
manifolds. 

\item The dimensional reduction of $\cN=2$ four-dimensional
supergravity to three-dimensions is the origin of the celebrated c-map
in quaternionic-K\"ahler and projective special K\"ahler geometry
\cite{FerraraSabharwal,deWit:2001brd,Hitchin}, see also
\cite{WitProeyen} for a related construction called the \emph{the
r-map} in the literature. In particular, $\cN=2$ supergravity has a deep
connection with Quaternionic-K\"ahler manifolds whose mathematical investigation has been already initiated in several pioneering works, see \cite{Alekseevsky:2013nua,Alekseevsky:2012fu,Cortes:2011aj,Hitchin,Macia:2014oca} and their citations.  
\end{itemize}

\noindent
In contrast to higher-dimensional supergravities, which receive
increasing mathematical attention and whose geometric formulation and
structure is being actively investigated
\cite{Babalic:2016mbw,Baraglia,Braunack-Mayer:2018uyy,Fiorenza:2017jqx,Garcia-Fernandez:2016azr,Grady:2019man,Sati:2010ss},
the mathematics community has paid little attention to low-dimensional
supergravity, in particular four-dimensional supergravity. This might be due in part to the
inaccessibility of the relevant physics literature to
mathematicians. Nonetheless, four-dimensional supergravity is an
extremely rich subject from a mathematical standpoint, and there
exists indeed a \emph{plethora} of such theories involving interesting modern mathematical structures and leading to novel mathematical problems, most of which have not emerged into the mathematical community. The distinction
between higher and low dimensional supergravity is akin to that
between higher and low-dimensional differential topology, the latter
yielding a remarkably rich and subtle theory \cite{Donaldson}. Given
their importance, the local structure and properties of four
dimensional supergravities have been extensively studied in the physics
literature, in a long-term effort that evolved myriad of
ramifications. We refer the reader to
\cite{Andrianopoli:1996cm,Andrianopoli:1996ve,Aschieri:2008ns,Ceresole:1995ca,Ceresole:1995jg,Cremmer:1982en,Cremmer:1982wb,deWit:1984rvr,deWit:1984wbb,Fre:1995dw,LopesCardoso:2019mlj} for more details and references. Despite all this work, the global
geometric formulation and proper mathematical theory of
four-dimensional supergravity are poorly understood and remain open
for investigation and exploration.

It would be desirable to develop the complete mathematical foundations
of all four-dimensional supergravities, including their bosonic and
fermionic sectors. In our opinion, this may be currently out of
reach. Fortunately, most applications of supergravity to differential
geometry and topology only require the mathematical theory of the
bosonic sector together and the Killing spinor equations, which fully
capture the geometry and topology of supersymmetric solutions and
associated moduli spaces. Solutions of the equations of motion of a
supergravity theory which satisfy the supergravity Killing spinor
equations are called \emph{supersymmetric solutions} and have been
intensively studied in the physics \cite{Gran:2018ijr} and mathematics
literature \cite{Friedrich:2001nh}, the latter focusing almost
entirely on higher-dimensional Riemannian signature. The global
geometrization of bosonic supergravity together with its associated
Killing spinor equations on oriented manifolds of arbitrary topology
was named \emph{geometric supergravity} in
\cite{Lazaroiu:2016iav,Lazaroiu:2017qyr,Cortes:2018lan}, which
initiated a long-term program devoted to systematically developing the
mathematical foundations of four-dimensional (ungauged) geometric
supergravity. The first step in this program concerns the bosonic
sector, paying special attention to its Dirac quantization,
electromagnetic U-duality group and various reductions to
three-dimensional Riemannian manifolds and Riemann surfaces. We note
that the mathematical theory of geometric supergravity is far from
finished, and \cite{Lazaroiu:2016iav,Lazaroiu:2017qyr,Cortes:2018lan}
constitute only a first few steps towards its completion.

In this short review we will discuss some of the results of
\cite{Lazaroiu:2016iav,Lazaroiu:2017qyr,Cortes:2018lan} concerning the
global mathematical formulation and symplectic duality structure of
geometric supergravity. Roughly speaking, the generic bosonic sector
of four-dimensional supergravity consists of three sub-sectors, namely:

\begin{itemize}
	\item The gravitational sector, which corresponds to the
Einstein-Hilbert term of the local Lagrangian.
	
	\item The scalar sector, which corresponds locally to a non-linear
sigma model coupled to gravity.
	
	\item The gauge sector, which corresponds locally to a theory
of an arbitrary number of \emph{abelian gauge fields} coupled to the
scalars fields of the scalar sector.
\end{itemize}

\noindent Therefore, four-dimensional supergravity can be though of as
the unification, using supersymmetry as a guiding principle, of three
cornerstones of differential geometry, namely the theory of Einstein
metrics, the theory of harmonic maps and Yang-Mills theory.

The Killing spinor equations of four-dimensional supergravity are
first order differential equations involving the bosonic fields of the
theory and a \emph{supersymmetry parameter}, which is mathematically
described as a section of an appropriate bundle of Clifford modules
over the underlying Lorentzian manifold. Supergravity Killing spinor
equations generalize, through the principle of
supersymmetry, well-known spinorial equations studied intensively in
the literature, such as Hermite-Yang-Mills equations, instanton
equations, the Seiberg-Witten equations, generalized Killing spinor
equations or the pseudoholomorphicity equations. The study of
supergravity Killing spinor equations makes contact with modern areas
of mathematics under current development and brings supergravity into
\emph{mathematical gauge theory}, a field of mathematics whose tools
and methods are specially well-adapted to the study of supersymmetric
solutions and their moduli spaces. We hope that the development of the
mathematical theory of four-dimensional supergravity can clarify this
relation and bring new problems and perspectives into mathematical gauge theory.

An important remark is in order: we do not discuss the Dirac
quantization of geometric supergravity in this report, since it is yet
to be fully developed and it is work in progress
\cite{LazaroiuShahbazi}. As shown in Op. Cit., implementing Dirac
quantization is a fundamental step in order to properly understand the
geometric structure of four-dimensional supergravity as well as the
global structure of its solutions and associated moduli spaces.

The outline of this manuscript is as follows. In Section
\ref{sec:localsugra} we review the well-known local formulation of
four-dimensional bosonic supergravity, giving a rigorous seemingly novel
description of its electromagnetic U-duality group. In Section
\ref{sec:geometricsugra} we explain the global geometric formulation
of bosonic four-dimensional supergravity together with the necessary
geometric background. In Section \ref{sec:globalautgroup} we describe
the global electromagnetic U-duality group of geometric supergravity,
characterizing it in terms of a certain short exact sequence and
discussing some examples. Finally, in Section \ref{sec:KSE} we briefly
discuss the Killing spinor equations of four-dimensional supergravity
and present some explicit examples, mentioning along the way some open
mathematical problems.


\section{Local bosonic supergravity}
\label{sec:localsugra}


In this section we review the local formulation of the generic bosonic
sector of four-dimensional supergravity, paying special attention to the electromagnetic
\emph{U-duality} group of the theory, which consists of 
electromagnetic duality transformations of the abelian gauge fields
coupled to scalars and gravity. The local formulation of the bosonic
sector of four-dimensional supergravity was considered in detail in
references
\cite{Andrianopoli:1996cm,Andrianopoli:1996ve,Andrianopoli:1996wf,Gaillard:1981rj},
where the duality transformations of the local theory were
investigated. The reader is referred to
\cite{Aschieri:2008ns,Ceresole:1995ca,Fre:1995dw,LopesCardoso:2019mlj}
for comprehensive reviews and exhaustive lists of references.

Let $U$ be a contractible non-empty oriented and relatively compact open subset 
of $\mathbb{R}^4$ with coordinates $\left\{ x^a\right\}$, where $a =1,
\hdots , 4$. Fixing non-negative integers $n_s,n_v$, the configuration
space of the local bosonic sector of extended
four-dimensional supergravity with $n_s$ scalar fields and $n_v$
abelian gauge fields is defined as the set of triples $(g, \phi, A)$
consisting of:

\

\begin{itemize}
\item A Lorentzian metric $g$ defined on $U$.

\

\item An $\mathbb{R}^{n_s}\,$-valued function $\phi\colon U\to
\mathbb{R}^{n_s}$ defined on $U$. We denote the components of $\phi$ by
$\phi^i\colon U \to \mathbb{R}$, with $i = 1, \hdots , n_{s}$ and
fix an oriented open subset $V\subset \mathbb{R}^{n_s}$
containing $\phi(U)$. The real functions $\left\{
\phi^i\right\}$ are the (locally-defined)  \emph{scalar fields} of the theory. We will refer to such functions $\phi\colon U\to \mathbb{R}^{n_s}$ as \emph{scalar maps}.

\

\item An $\mathbb{R}^{n_v}\,$-valued one-form $A\in
\Omega^1(U,\mathbb{R}^{n_v})$. When necessary, we will denote the components
of $A$ by $A^{\Lambda} \in \Omega^1(U)$, with $\Lambda = 1 , \hdots ,
n_v$, which correspond to the local $\U(1)$ \emph{gauge fields} of the
theory. We denote by:
\begin{equation*} F \eqdef \dd A \in \Omega^2(U,\mathbb{R}^{n_v})\, ,
\end{equation*}

\noindent the \emph{field strength} associated to $A$, whose
components will be denoted by $F^{\Lambda} = \dd A^{\Lambda} \in
\Omega^2(U)$.
\end{itemize}

\

\noindent
The {\em local} bosonic sector of extended four-dimensional supergravity is defined through the following action functional:
\begin{equation}
\label{eq:localsugra} \mathrm{S}_l[g_U,\phi, A] \eqdef \int_U\left\{-
\mathrm{R}_{g_U} + \cG_{ij}(\phi) \partial_a \phi^i\partial^a \phi^j +
\mathcal{R}_{\Lambda \Sigma}(\phi) F^{\Lambda}_{ab}\ast F^{\Sigma\,
ab} + \mathcal{I}_{\Lambda \Sigma}(\phi) F^{\Lambda}_{ab} F^{\Sigma\,
ab}\right\} \nu_{g_U} \, ,
\end{equation}
 
\noindent where:

\

\begin{itemize}
	\item $\nu_{g_U}$ is the Lorentzian volume form associated to
$g$ and the given orientation on $U$.
	
	\
	
	\item $\cG \in \Gamma(T^{\ast}V\odot T^{\ast}V)$ is a
Riemannian metric on $V$. We denote by:
	\begin{equation*} \cG(\phi) \eqdef \cG\circ \phi \colon U \to
\mathrm{Sym}(n_s,\mathbb{R})\, ,
	\end{equation*}
	
	\noindent the composition of $\cG$ with $\phi$ and by
$\cG_{ij}(\phi)$ the components of $\cG(\phi)$ in the Cartesian
coordinates of $V\subset \mathbb{R}^{n_s}$.
	
	\
	 
	\item $\mathcal{R},\mathcal{I}\colon V\to
\mathrm{Sym}(n_v,\mathbb{R})$ are smooth functions on $V$ valued in
the vector space of $n_v\times n_v$ square symmetric matrices with
real entries. We denote by:
	\begin{equation*} \cR(\phi) \eqdef \cR\circ \phi\colon U \to
\mathrm{Sym}(n_v,\mathbb{R})\, , \qquad \cI(\phi) \eqdef \cI\circ
\phi\colon U \to \mathrm{Sym}(n_v,\mathbb{R})\, ,
	\end{equation*}
	
	\noindent the compositions of $\cR$ and $\cI$ with $\phi$ and
        by $\mathcal{I}_{\Lambda \Sigma}(\phi)$, $\mathcal{R}_{\Lambda
\Sigma}(\phi)$ the entries of the corresponding symmetric matrices. Furthermore, $\cI$
is required to be positive definite, a condition which is imposed in
order to have a consistent kinetic term for the gauge fields
$A^{\Lambda}$.
\end{itemize}

\

\noindent
Therefore, the local bosonic sector of supergravity on the oriented
open sets $(U,V)$ is uniquely determined by a choice of Riemannian
metric $\cG$ on $V$ and matrix-valued functions $\cI$ and $\cR$ as
described above. In some cases a scalar potential can occur in
\eqref{eq:localsugra}, but we have set it to zero for simplicity. The
functional $\mathrm{S}_l$ can be naturally written as a sum of three
pieces:
\begin{equation*}
\mathrm{S}_l = \mathrm{S}^{e}_l + \mathrm{S}^{s}_l + \mathrm{S}^{v}_l\, ,  
\end{equation*}

\noindent
where: 
\begin{equation*}
\mathrm{S}^e_l[g] \eqdef - \int_U \mathrm{R}_{g} \nu_{g} \, ,
\end{equation*}

\noindent
is the Einstein-Hilbert action on $U$,
\begin{equation*}
\mathrm{S}^s_l[g,\phi] \eqdef \int_U\,\cG_{ij}(\phi) \partial_a \phi^i\partial^a \phi^j\, \nu_{g} \, ,
\end{equation*}

\noindent
is a local non-linear sigma model with target space metric $\cG$, and
\begin{equation*}
\mathrm{S}^v_l[g,\phi, A] \eqdef \int_U\left\{ \mathcal{R}_{\Lambda \Sigma}(\phi) F^{\Lambda}_{ab}\ast F^{\Sigma\, ab} + \mathcal{I}_{\Lambda \Sigma}(\phi) F^{\Lambda}_{ab} F^{\Sigma\, ab}\right\} \nu_{g} \, ,
\end{equation*}

\noindent is a local Abelian Yang-Mills theory coupled to the scalars
$\left\{\phi^i\right\}_{i = 1, \hdots , n_s}$.

\begin{remark} In standard supergravity terminology,
$\mathrm{S}^{e}_l$ defines the \emph{gravity sector} of the theory,
$\mathrm{S}^{s}_l$ defines the \emph{scalar sector} of the theory and
$\mathrm{S}^{v}_l$ defines the \emph{gauge sector} of the theory.
\end{remark}

\noindent The matrix $\cI$ generalizes the inverse of the squared
coupling constant appearing in ordinary four-dimensional gauge
theories, whereas $\cR$ generalizes the \emph{theta angle} of quantum
chromodynamics. All together, the generic bosonic sector of extended
supergravity couples Einstein-Hilbert's action to a non-linear sigma
model with Riemannian target space $(V,\cG)$ and to a given number of
\emph{abelian gauge fields}. In supergravity terminology, the
Riemannian manifold $(V,\cG)$ is called the \emph{scalar manifold} of
the theory and $\cG$ its \emph{scalar metric}.

\begin{definition} We define a {\bf local electromagnetic structure}
on $V$ to be a pair $(\cR,\cI)$, where both $\cR$ and $\cI$ are symmetric
$n_v\times n _v$ matrix-valued functions on $V$ with $\cI$
positive-definite. We will denote by $\mathfrak{E}_V$ the set of all
electromagnetic structures on $V$.  We define a {\bf local
scalar-electromagnetic structure} on $V$ to be a triple
$(\cG,\cR,\cI)$, where $\cG$ is a Riemannian metric on $V$ and
$(\cR,\cI)$ is an electromagnetic structure. We will refer to the
local supergravity with scalar metric $\cG$ and gauge couplings
$(\cR,\cI)$ simply as the local supergravity associated to
$(\cG,\cR,\cI)$.
\end{definition}

\noindent Supersymmetry constrains the local isometry type of the
Riemannian manifold $(V,\cG)$ that can be considered as the target
space of the non-linear sigma model of a given supergravity
theory. Depending on the amount $\cN$ of supersymmetry preserved, the
local isometry type of $(V,\cG)$ is given as follows \cite{Andrianopoli:1996ve}:

\begin{table}[H]
	\centering
	\begin{tabular}{|P{5cm}|P{7cm}|P{2.5cm}|}
		\hline
		Number of supersymmetries & Isometry type of $(V,\cG)$ & Dimension \\ \hline
		$\cN= 1$ & $\cM_{\mathrm{KH}}$ & $2n_c$  \\ \hline
		$\cN= 2$ & $\cM_{\mathrm{PSK}}\times \cM_{\mathrm{QK}}$ & $2n_v + 4n_H$  \\ \hline
		$\cN= 3$ & $\mathrm{SU}(3,n)/\mathrm{S}(\U(3)\times \U(n))$ & $6n_v$  \\ \hline
		$\cN= 4$ & $\mathrm{SU}(1,1)/\U(1)\times\mathrm{SO}(6,n)/\mathrm{S}(\mathrm{O}(6)\times \mathrm{O}(n))$ & $6n_v + 2$  \\ \hline
		$\cN= 5$ & $\mathrm{SU}(1,5)/\mathrm{S}(\U(1)\times \U(5))$ & 10  \\ \hline
		$\cN= 6$ & $\mathrm{SO}^{\ast}(12)/(\U(1)\times \SU(6))$ & 30  \\ \hline
		$\cN= 8$ & $\mathrm{E}_{7(7)}/(\mathrm{S}\U(8)/\mathbb{Z}_2)$ & 70  \\ 
		\hline
	\end{tabular}
  \newline\newline
\caption{Isometry type of the scalar manifolds of four-dimensional
supergravity, depending on the amount $\cN$ of supersymmetry of the
theory. The symbol $n_c$ denotes the number of chiral multiplets,
$n_v$ denotes the number of vector multiplets and $n_H$ denotes the
number of hypermultiplets.}\label{tableSusyScalar}
\end{table}

\begin{remark} The case $\cN=7$ does not appear in the previous list
because $\cN=7$ supergravity can be shown to always admit an additional
supersymmetry which automatically makes it into $\cN=8$ supergravity \cite{Castellani:1991et}.
\end{remark}

\noindent The symbol $\cM_{\mathrm{KH}}$ denotes a K\"ahler-Hodge
manifold, or more precisely a complex manifold equipped with a chiral
triple \cite{Cortes:2018lan}, whereas $\cM_{\mathrm{PSK}}$ and
$\cM_{\mathrm{QK}}$ respectively denote a projective special K\"ahler
manifold and a Quaternionic-K\"ahler manifold. For $\cN>2$, the scalar
manifolds appearing in the previous table are all simply connected and
non-compact symmetric manifolds equipped with a certain Riemannian
metric. All of them are diffeomorphic to $\mathbb{R}^k$ for an
appropriate $k$. In the $\cN= 8$ case, $\mathrm{E}_{7(7)}$ denotes the
maximally non-compact real form of the complex exceptional Lie group
$\mathrm{E}_7$ and $\mathrm{S}\U(8)/\mathbb{Z}_2\subset \mathrm{E}_{7(7)}$
is its maximal compact subgroup.


\subsection{Equations of motion}
\label{sec:eqsmotion}


Let $\mathrm{G}^g_{ab} \eqdef \mathrm{R}^g_{ab} - \frac{1}{2}g_{ab} R^g$ denote the Einstein tensor associated to $g$. The equations of motion that follow from the action functional \eqref{eq:localsugra} for a given local scalar-electromagnetic structure $(\cG,\cR,\cI)$ are the following:

\

\begin{itemize}
	\item The \emph{Einstein equations}:
\begin{equation}
\label{eq:Einsteinlocal}
\mathrm{G}^g_{ab}= \cG_{ij}(\phi) \partial_a \phi^i \partial_b \phi^j - \frac{1}{2} g_{ab} \cG_{ij}(\phi) \partial_c \phi^i \partial^c \phi^j + 2 \cI_{\Lambda \Sigma}(\phi) F^{\Lambda}_{ac} F^{\Sigma c}_b - \frac{1}{2} g_{ab} \cI_{\Lambda \Sigma}(\phi)  F^{\Lambda}_{cd} F^{\Sigma cd}\, .
\end{equation}

\item The \emph{scalar equations}:
\begin{equation}
\label{eq:Scalarlocal}
\nabla_a^g (\cG_{ik}(\phi)  \partial^a \phi^i) = \frac{1}{2} \partial_k \cG_{ij}(\phi) \partial_a \phi^i \partial^a \phi^j + \frac{1}{2} \partial_k \cR_{\Lambda \Sigma}(\phi) F^{\Lambda}_{ab} \ast F^{\Sigma ab} + \frac{1}{2} \partial_k \cI_{\Lambda \Sigma}(\phi) F^{\Lambda}_{ab} F^{\Sigma ab}\, .
\end{equation}

\item The \emph{Maxwell equations}:
\begin{equation}
\label{eq:Maxwelllocal}
\nabla_a^g (\cR_{\Lambda \Sigma}(\phi) \ast F^{\Sigma ab} + \cI_{\Lambda \Sigma}(\phi) F^{\Sigma ab}) = 0\, .
\end{equation}

\end{itemize}

\

\noindent
The variables of the supergravity equations consist on Lorentzian metrics $g$ on $U$, $n_s$ scalars $\left\{\phi^i\right\}$ and $n_v$ closed two-forms $\left\{ F^{\Lambda} \right\}$. Conditions $\dd F^{\Lambda} = 0$, $\Lambda = 1,\hdots , n_v$, are known as the \emph{Bianchi identities}, and ensure that $F = \dd A$ for a vector valued one-form $A$ on $U$ . It can be easily seen that the Maxwell equations are equivalent to:
\begin{equation*}
\dd (\cR_{\Lambda \Sigma}(\phi) F^{\Sigma}) = \dd (\cI_{\Lambda \Sigma}(\phi) \ast F^{\Sigma})\, .
\end{equation*}

\noindent
Define now the two-forms:
\begin{equation*}
G_{\Lambda}(\phi) \eqdef \cR_{\Lambda \Sigma}(\phi) F^{\Sigma} - \cI_{\Lambda \Sigma}(\phi) \ast F^{\Sigma} \in \Omega^2(U)\, , \qquad \Lambda = 1,\hdots , n\, .
\end{equation*}

\noindent For ease of notation, we will sometimes drop the explicit
dependence of $G_{\Lambda}(\phi)$ on $\phi$. Then, the Bianchi
identities and Maxwell equations \eqref{eq:Maxwelllocal} are given by:
\begin{equation*}
\dd F^{\Sigma} = 0\, , \qquad \dd G_{\Lambda} = 0\, , \qquad \Lambda = 1,\hdots , n
\end{equation*}

\noindent
which in turn can be equivalently written simply as:
\begin{equation*}
\dd\cV(\phi) = 0\, ,
\end{equation*}

\noindent
where $\cV(\phi)\in \Omega^2(U,\mathbb{R}^{2n})$ denotes the following vector of two-forms:
\begin{equation*}
\cV(\phi) 
=   
\begin{pmatrix} 
F  \\
G(\phi)  
\end{pmatrix}\in \Omega^2(U,\mathbb{R}^{2n_v}) \, .
\end{equation*}

\noindent
The following important lemma follows by direct computation.

\begin{lemma}
\label{lemma:twistedselfdual}
Let $(\cR,\cI)$ be a local electromagnetic structure. A vector-valued two-form $\cV\in \Omega^2(U,\mathbb{R}^{2n_v})$ can be written as:
\begin{equation*}
\cV
=   
\begin{pmatrix} 
F  \\
G(\phi)  
\end{pmatrix} \, ,
\end{equation*}

\noindent
for $F\in \Omega^2(U,\mathbb{R}^{n_v})$, where $\phi\colon U\to \mathbb{R}^{n_s}$ and $G(\phi) = \cR(\phi) F - \cI(\phi) \ast F$, if and only if:
\begin{equation}
\label{eq:twistedselfdualitylocal}
\ast\cV = - \cJ(\phi)(\cV)\, , 
\end{equation}

\noindent
where $\cJ\colon V\to \mathrm{Gl}(2n_v,\mathbb{R})$ is the matrix-valued map defined as follows
\begin{equation*}
\cJ=  
\begin{pmatrix} 
 - \cI^{-1} \cR & \cI^{-1} \\
-\cI  - \cR\cI^{-1}\cR & \cR \cI^{-1} 
\end{pmatrix} \colon V\to \mathrm{Gl}(2n_v,\mathbb{R}) \, ,
\end{equation*}

\noindent
and $\cJ(\phi) \eqdef \cJ\circ\phi\colon U \to \mathrm{Gl}(2n_v,\mathbb{R})$. In particular, we have $\cJ^2 = -1$.
\end{lemma}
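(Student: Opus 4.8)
The plan is to reduce the statement to linear algebra over the commutative ring $\cC^\infty(U,\R)$, using a single geometric input: in four Lorentzian dimensions the Hodge star squares to $-\id$ on two-forms, $\ast\ast = -\id$ on $\Omega^2(U)$. Since the entries of $\cR(\phi)$ and $\cI(\phi)$ are functions on $U$, they commute with $\ast$, so $\ast$ may be carried through all matrix multiplications as if it were a scalar obeying $\ast^2 = -1$. I would write the two-form in block form as $\cV = \begin{pmatrix}\alpha\\ \beta\end{pmatrix}$ with $\alpha,\beta\in\Omega^2(U,\R^{n_v})$, and multiply out the block matrix $\cJ(\phi)$ so that the equation $\ast\cV = -\cJ(\phi)(\cV)$ splits into its first and second row equations.

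First I would analyse the first row equation, which reads $\ast\alpha = \cI^{-1}\cR\,\alpha - \cI^{-1}\beta$. Left-multiplying by $\cI$, which is invertible because it is positive-definite, rearranges it to $\beta = \cR\,\alpha - \cI\ast\alpha$. Setting $F\eqdef\alpha$, this is exactly the condition $\beta = G(\phi) = \cR(\phi)F - \cI(\phi)\ast F$. Thus the first row equation holds if and only if $\cV$ has the asserted form, and this single equivalence already delivers both implications of the lemma, provided the second row equation contributes nothing new.

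The crux is therefore to show that the second row equation, $\ast\beta = (\cI + \cR\cI^{-1}\cR)\alpha - \cR\cI^{-1}\beta$, is an automatic consequence of $\beta = \cR\alpha - \cI\ast\alpha$. Substituting this expression for $\beta$ and using $\ast\ast\alpha = -\alpha$, the left-hand side becomes $\cR\ast\alpha + \cI\alpha$, while on the right-hand side the two $\cR\cI^{-1}\cR$ contributions cancel and the remainder collapses to the same $\cI\alpha + \cR\ast\alpha$. This cancellation is the only place where the Lorentzian sign $\ast\ast = -\id$ is genuinely used, and it is the only step needing care; I would display it explicitly. Together with the previous paragraph this proves the equivalence in both directions.

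Finally, the assertion $\cJ^2 = -1$ is purely algebraic and independent of the forms: it follows by squaring the block matrix $\cJ$ and simplifying each of the four resulting blocks, using only the invertibility of $\cI$. The two diagonal blocks reduce to $-\id$ after the $\cI^{-1}\cR\cI^{-1}\cR$-type terms cancel, and the two off-diagonal blocks vanish identically. I expect no obstacle here beyond routine bookkeeping; note that, in contrast to the symplectic interpretation of $\cJ$, the identity $\cJ^2 = -1$ does not even require $\cR$ and $\cI$ to be symmetric.
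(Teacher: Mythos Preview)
Your proof is correct and is precisely the ``direct computation'' the paper alludes to; the paper does not spell out any argument beyond stating that the lemma follows by direct computation, so your block-by-block verification (first row equivalent to $\beta=\cR\alpha-\cI\ast\alpha$, second row then automatic via $\ast\ast=-1$, and $\cJ^2=-1$ by matrix squaring) is exactly what is intended.
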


\begin{remark} The matrix-valued map $\cJ\colon V\to
\mathrm{Gl}(2n_v,\mathbb{R})$ can be understood as a fiber-wise complex
structure on the trivial vector bundle of rank $2n_v$ over $V$.
\end{remark}

\noindent
Equation \eqref{eq:twistedselfdualitylocal} is known in the literature
as the \emph{twisted self-duality condition} for the field strength
$\cV\in \Omega^2(U,\mathbb{R}^{2n_v})$. The following proposition
gives the geometric interpretation of condition
\eqref{eq:twistedselfdualitylocal}, which in turn unveils the global geometric
interpretation of the twisted self-duality condition, as we will see
in Section \ref{sec:geometricsugra}. For future reference, we define
the \emph{standard symplectic form} of $\mathbb{R}^{2n}$ to have the
matrix representation:
\begin{equation}
\omega =  
\begin{pmatrix} 
0 & -\mathrm{Id}\\
\mathrm{Id} & 0
\end{pmatrix}  
\end{equation}

\noindent
in the canonical basis of $\mathbb{R}^{2n_v}$. More precisely, if we denote the
canonical basis of $\mathbb{R}^{2n_v}$ by $\cE = (e_1 , \hdots ,
e_{n_v} , f_1,\hdots , f_{n_v})$, we have:
\begin{equation}
\label{eq:omegastandard}
\omega \eqdef \sum_a f_a^{\ast} \wedge e_a^{\ast}\, , \qquad a = 1, \hdots , n_v\, ,
\end{equation}

\noindent
where $\cE^{\ast} = (e_1^{\ast} , \hdots , e^{\ast}_{n_v} ,
f^{\ast}_1,\hdots , f^{\ast}_{n_v})$ is the basis dual to $\cE = (e_1
, \hdots , e_{n_v} , f_1,\hdots , f_{n_v})$.

\begin{prop}
\label{prop:cNTaming}
Let $\omega$ be the standard symplectic form on $\mathbb{R}^{2n}$. A matrix-valued map $\cJ\colon V \to \Aut(\mathbb{R}^{2n})$ can be written as:
\begin{equation}
\label{eq:Jlocaltaming}
\cJ= 
\begin{pmatrix} 
- \cI^{-1} \cR & \cI^{-1} \\
- \cI - \cR\cI^{-1}\cR & \cR \cI^{-1} 
\end{pmatrix}  \colon V \to \Aut(\mathbb{R}^{2n})
\end{equation}

\noindent
for a local electromagnetic structure $(\cR,\cI)$ if and only if $\cJ\vert_p$ is a compatible taming of $\omega$ for every $p\in V$. 
\end{prop}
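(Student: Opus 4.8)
The statement is pointwise in $p\in V$, so I would fix $p$, abbreviate $R\eqdef\cR(p)$, $I\eqdef\cI(p)$, and regard $\omega$ and $\cJ\vert_p$ as $2n_v\times 2n_v$ matrices acting on column vectors $v=(x,y)^{\mathrm{t}}$ with $x,y\in\R^{n_v}$, so that $\omega(v,w)=v^{\mathrm{t}}\omega w$. Recall that $\cJ\vert_p$ being a \emph{compatible taming} of $\omega$ means: (T1) $\cJ^2=-\mathrm{Id}$; (T2) $\cJ^{\mathrm{t}}\omega\cJ=\omega$, i.e. $\cJ\in\Sp(2n_v,\R)$; and (T3) the bilinear form $g_{\cJ}(v,w)\eqdef\omega(v,\cJ w)=v^{\mathrm{t}}\omega\cJ w$ is positive-definite. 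The one reformulation I would set up first is that, \emph{under} (T1), condition (T2) is equivalent to the matrix $\omega\cJ$ being symmetric: $g_{\cJ}$ is symmetric iff $\omega\cJ=-\cJ^{\mathrm{t}}\omega$, and multiplying this on the right by $\cJ$ and using $\cJ^2=-\mathrm{Id}$ turns it into $\cJ^{\mathrm{t}}\omega\cJ=\omega$ (and conversely). Thus a compatible taming is exactly a matrix with $\cJ^2=-\mathrm{Id}$ such that $\omega\cJ$ is symmetric and positive-definite.

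For the forward direction I would assume $(R,I)$ is a local electromagnetic structure ($R=R^{\mathrm{t}}$, $I=I^{\mathrm{t}}>0$) and define $\cJ$ by \eqref{eq:Jlocaltaming}. Condition (T1) is precisely the identity $\cJ^2=-\mathrm{Id}$ already recorded in Lemma~\ref{lemma:twistedselfdual} (a one-line block multiplication reproves it). A direct block product gives
\[
\omega\cJ=\begin{pmatrix} I+RI^{-1}R & -RI^{-1}\\ -I^{-1}R & I^{-1}\end{pmatrix},
\]
which is manifestly symmetric since $R$ and $I$ are, yielding (T2) via the reformulation above. For (T3) I would complete the square: with $v=(x,y)^{\mathrm{t}}$ one gets $g_{\cJ}(v,v)=x^{\mathrm{t}}Ix+(Rx-y)^{\mathrm{t}}I^{-1}(Rx-y)$, so positivity of $I$ forces both summands to be nonnegative and to vanish simultaneously only when $x=0$ and $Rx-y=0$, i.e. $v=0$.

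The converse is the crux. Suppose $\cJ$ is a compatible taming; by the reformulation, $G\eqdef\omega\cJ$ is symmetric positive-definite, so in $n_v\times n_v$ blocks $G=\left(\begin{smallmatrix}P&Q\\Q^{\mathrm{t}}&S\end{smallmatrix}\right)$ with $P=P^{\mathrm{t}}$, $S=S^{\mathrm{t}}$ both positive-definite (principal submatrices of $G$), and $\cJ=\omega^{-1}G=-\omega G=\left(\begin{smallmatrix}Q^{\mathrm{t}}&S\\-P&-Q\end{smallmatrix}\right)$. I then impose (T1): expanding $\cJ^2=-\mathrm{Id}$ in blocks produces, among its four equations, the off-diagonal relation $SQ=Q^{\mathrm{t}}S$ and the diagonal relation $PS=\mathrm{Id}+Q^2$. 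I \emph{define} $I\eqdef S^{-1}$ and $R\eqdef-QS^{-1}$; then $I$ is symmetric positive-definite, and the relation $SQ=Q^{\mathrm{t}}S$ gives $R^{\mathrm{t}}=-S^{-1}Q^{\mathrm{t}}=-S^{-1}(SQS^{-1})=-QS^{-1}=R$, so $R$ is symmetric and $(R,I)$ is a genuine local electromagnetic structure. Substituting $I=S^{-1}$, $R=-QS^{-1}$ back into the block expression for $G$ and using $PS=\mathrm{Id}+Q^2$ to match the top-left block reproduces exactly the matrix displayed in the forward step, so $\cJ=-\omega G$ coincides with \eqref{eq:Jlocaltaming}; the two remaining block equations of $\cJ^2=-\mathrm{Id}$ are then automatically consistent. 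Since $I=S^{-1}$ and $R=-QS^{-1}$ are rational in the entries of $\cJ$, the matrices $\cR,\cI$ depend smoothly on $p$ and are uniquely determined.

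The main obstacle is this converse, and within it the point that the structural conditions (T1)--(T3) do more than produce the candidates $I=S^{-1}$, $R=-QS^{-1}$: they must \emph{force} $R$ to be symmetric — this is exactly where the constraint $SQ=Q^{\mathrm{t}}S$ coming from $\cJ^2=-\mathrm{Id}$ is indispensable — and must force the top-left block $P$ into the precise shape $I+RI^{-1}R$, which is the content of the companion constraint $PS=\mathrm{Id}+Q^2$. Everything else reduces to routine block algebra together with the positivity of $I$.
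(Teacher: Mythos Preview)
Your argument is correct, but it takes a different route from the paper's. The paper proves the converse by exploiting the complex structure directly: it observes that $(f_1,\dots,f_{n_v})$ is a $\C$-basis of $(\R^{2n_v},\cJ)$ (this uses the taming to see that the Lagrangian $\mathrm{span}(f_a)$ meets its $\cJ$-image trivially), writes $e_a=\sum_b\cN_{ab}\,f_b$ for a unique complex matrix $\cN$, and then reads off the symmetry of $\cN$ and the positive-definiteness of $\Im\cN$ from the identities $\omega(e_a,f_b)=-\delta_{ab}$ and $\omega(e_a,e_b)=0$; finally it sets $\cR=-\Re\cN$, $\cI=\Im\cN$ and computes $\cJ$ on the basis. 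Your approach, by contrast, stays entirely in real block algebra: you parametrize $\cJ$ via the Gram matrix $G=\omega\cJ$, extract $I=S^{-1}$ and $R=-QS^{-1}$ from its blocks, and use the block equations of $\cJ^2=-\mathrm{Id}$ to force the required symmetry and the top-left block identity. Your method is more elementary and self-contained; the paper's method has the structural payoff of introducing the period matrix $\cN$, which is the central object of Section~\ref{sec:periodmatrix} and underlies the bijection $\mu$ of Lemma~\ref{lemma:equivarianceelectromagnetic}. Both arguments are clean; yours avoids any appeal to the complex-basis fact, while the paper's makes the link to $\mathbb{SH}(n_v)$ manifest.
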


\begin{remark}
  We recall that a complex structure $J$ on $\mathbb{R}^{2n_v}$ is said to be a {\em compatible taming} of $\omega$ if:
\begin{equation*}
\omega(J \xi_1, J \xi_2 ) = \omega(\xi_1,\xi_2)\, , \qquad \forall\,\, \xi_1 , \xi_2 \in \mathbb{R}^{2n_v}\, ,
\end{equation*}

\noindent
and:
\begin{equation*}
\omega(\xi,J\xi) > 0\, , \qquad \forall\,\, \xi\in \mathbb{R}^{2n_v}\backslash\left\{0\right\}\, .
\end{equation*}

\noindent In the following we shall always consider $\mathbb{R}^{2n_v}$
to be endowed with the symplectic form $\omega$ as introduced above.
\end{remark}

\begin{proof} If $\cJ$ is taken as in equation \eqref{eq:Jlocaltaming}
for a certain local electromagnetic structure $(\cR,\cI)$ then a
direct computation shows that it is a compatible taming of
$\omega$. Conversely, assume that $\cJ$ is a complex structure
on $\mathbb{R}^{2n_v}$ taming $\omega$ at every point
in $V$ (we omit the evaluation at a point for ease of notation). Let
$\cE = (e_1 , \hdots , e_{n_v} , f_1,\hdots , f_{n_v})$ the canonical
basis of $\mathbb{R}^{2n}$, which is symplectic with respect to
$\omega$. The vectors $\cE_f = ( f_1,\hdots , f_{n_v})$ form a basis
of the complex vector space $(\mathbb{R}^{2n},\cJ)$, and there exists
a unique map $\cN\colon V\to \Mat(n,\mathbb{C})$ valued in the complex
$n\times n$ square matrices and satisfying:
\begin{equation}
\label{eq:localperiod}
e_{a} = \sum_{b} \cN_{ab}\, f_{b}\, , \qquad a , b = 1, \hdots , n_v\, .
\end{equation}

\noindent
Then:
\begin{equation*}
\omega(e_{a} , f_{b}) = \sum_{c} \omega(\cN_{ac}\, f_{c} , f_{b}) = \sum_{c} \Im(\cN)_{ac}\omega(J f_{c} , f_{a}) = - \delta_{ab}\, ,
\end{equation*}

\noindent
which implies that $\Im(\cN)$ is a symmetric and positive definite $n\times n$ real matrix. Moreover, using the previous equation and the compatibility of $\cJ$ with $\omega$, we compute:
\begin{equation*}
0 = \omega(e_a , e_b) = \mathrm{Re}(\cN)_{ab} +\mathrm{Im}(\cN)_{ac} \omega(J(e_b),f_c) = \mathrm{Re}(\cN)_{ab} - \mathrm{Re}(\cN)_{ba}\, , 
\end{equation*}

\noindent
whence $\cN\colon V\to \Mat(n_v,\mathbb{C})$ is valued in the symmetric complex square matrices of positive-definite imaginary part. Hence, setting:
\begin{equation*}
\cR\eqdef - \mathrm{Re}(\cN)\, , \qquad \cI\eqdef \mathrm{Im}(\cN)\, ,
\end{equation*}

\noindent
we obtain a well-defined local electromagnetic structure $(\cR,\cI)$. Using again equation \eqref{eq:localperiod} to compute the action of $\cJ$ on the basis $\cE$ we obtain:
\begin{equation*}
\cJ(e_a) = -\cR_{ab} \cI^{-1}_{bc}\, e_c - \cR_{ab}\cI^{-1}_{bc}\cR_{cd}\, f_d  - \cI_{ad}\, f_d\, , \qquad \cJ(f_a) = \cI^{-1}_{ab}\, e_b + \cI^{-1}_{ab}\cR_{bc}\,f_c\, ,
\end{equation*}

\noindent
which is equivalent to \eqref{eq:Jlocaltaming}.  
\end{proof}

\begin{remark} The map $\cN\colon V\to \Mat(n_v,\mathbb{C})$
constructed in the proof of the previous proposition is called the
\emph{period matrix} in the literature and can be used to obtain a
convenient local formulation of bosonic supergravity, as we will
explain in Section \ref{sec:periodmatrix}.
\end{remark}

\noindent
For future convenience we introduce the following definition.

\begin{definition} A {\bf taming map} $\cJ$ on $V$ is a smooth map
$\cJ\colon V \to \Aut(\mathbb{R}^{2n_v})$ such that $\cJ\vert_p$ is a complex structure on $\mathbb{R}^{2n_v}$ which compatibly tames $\omega$,
where the latter is the standard symplectic structure on
$\mathbb{R}^{2n_v}$ as defined in \eqref{eq:omegastandard} in terms of
the canonical basis of $\mathbb{R}^{2n_v}$. We will denote the space
of all taming maps by $\mathfrak{J}_V(\mathbb{R}^{2n_v},\omega)$.
\end{definition}
 
\begin{prop}
\label{prop:1to1electromagnetic} There is a one-to-one correspondence
between taming maps $\cJ\colon V\to \Aut(\mathbb{R}^{2n_v})$ and local
electromagnetic structures $(\cR,\cI)$, i.e. there exists a canonical
bijection:
\begin{equation*}
\gamma\colon \mathfrak{E}_V \to \mathfrak{J}_V(\mathbb{R}^{2n_v},\omega)\, ,\qquad (\cR,\cI) \mapsto  \cJ= 
\begin{pmatrix} 
- \cI^{-1} \cR & \cI^{-1} \\
- \cI - \cR\cI^{-1}\cR & \cR \cI^{-1} 
\end{pmatrix} \, .  
\end{equation*}
\end{prop}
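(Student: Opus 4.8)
The plan is to prove the bijectivity of $\gamma$ by leaning on Proposition \ref{prop:cNTaming}, which already supplies the only nontrivial analytic content, and then to add the short injectivity argument together with a smoothness check for the inverse. First I would verify that $\gamma$ is well-defined as a map $\mathfrak{E}_V \to \mathfrak{J}_V(\mathbb{R}^{2n_v},\omega)$: given $(\cR,\cI)\in\mathfrak{E}_V$, positive-definiteness of $\cI$ guarantees that $\cI^{-1}$ exists and depends smoothly on $p\in V$, so all four $n_v\times n_v$ blocks appearing in \eqref{eq:Jlocaltaming} are smooth and $\gamma(\cR,\cI)\colon V\to\Aut(\mathbb{R}^{2n_v})$ is a smooth map; the forward implication of Proposition \ref{prop:cNTaming} then shows that $\gamma(\cR,\cI)\vert_p$ is a compatible taming of $\omega$ at every $p$, so indeed $\gamma(\cR,\cI)\in\mathfrak{J}_V(\mathbb{R}^{2n_v},\omega)$.

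Surjectivity and injectivity I would handle simultaneously by reading $(\cR,\cI)$ directly off the blocks of $\cJ$, which also produces the inverse explicitly. In the decomposition of $\mathbb{R}^{2n_v}$ induced by the canonical basis $\cE$, denote the upper-left, upper-right, lower-left and lower-right $n_v\times n_v$ blocks of $\cJ$ by $A,B,C,D$. Comparison with \eqref{eq:Jlocaltaming} gives $B=\cI^{-1}$ and $A=-\cI^{-1}\cR$, whence $\cI=B^{-1}$ and $\cR=-B^{-1}A$. This determines $(\cR,\cI)$ uniquely from $\cJ$, so $\gamma$ is injective: if $\gamma(\cR,\cI)=\gamma(\cR',\cI')$ then the upper-right blocks coincide, forcing $\cI=\cI'$, and then the upper-left blocks force $\cR=\cR'$. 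For surjectivity, the converse implication of Proposition \ref{prop:cNTaming} shows that any taming map $\cJ$ has pointwise the form \eqref{eq:Jlocaltaming} for the pair $(\cR,\cI)=(-\mathrm{Re}(\cN),\mathrm{Im}(\cN))$ built from the period matrix $\cN$ of \eqref{eq:localperiod}; combined with the block formulas this identifies the candidate inverse as $\gamma^{-1}(\cJ)=(-B^{-1}A,\,B^{-1})$.

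The proof is essentially formal once Proposition \ref{prop:cNTaming} is granted, and the only genuine issue to watch is the passage from the pointwise statement to a global one, i.e. that the pair $(\cR,\cI)$ recovered from a smooth taming map $\cJ$ is itself smooth in $p$ and satisfies $\cI>0$. I expect this to be the main (though mild) obstacle, and I would settle it using the explicit block expressions $\cI=B^{-1}$, $\cR=-B^{-1}A$ rather than the period-matrix construction: smoothness of $\cJ$ gives smoothness of $A,B$, invertibility of $B$ is guaranteed by the taming condition (equivalently by Proposition \ref{prop:cNTaming}), and $\cI=B^{-1}$ is positive-definite because $\mathrm{Im}(\cN)$ is, so $(\cR,\cI)\in\mathfrak{E}_V$. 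Finally I would note that $\gamma$ is canonical precisely because its definition refers only to the fixed standard symplectic form $\omega$ and the canonical basis $\cE$ of $\mathbb{R}^{2n_v}$, involving no auxiliary choices, which completes the proof.
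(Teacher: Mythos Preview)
Your proposal is correct and follows essentially the same route as the paper: both reduce the bijection to Proposition \ref{prop:cNTaming}, with the inverse described via the period matrix $\cN$ (equivalently, via the block formulas $\cI=B^{-1}$, $\cR=-B^{-1}A$). You add a bit more care than the paper does---the explicit injectivity argument from the blocks and the smoothness check for $\gamma^{-1}$---but these are elaborations of the same idea rather than a different approach.
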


\begin{proof}
The statement follows directly from the proof of proposition \ref{prop:cNTaming}. The inverse:
\begin{equation*}
\gamma^{-1} \colon \mathfrak{J}_V(\mathbb{R}^{2n_v},\omega) \to \mathfrak{E}_V\, ,
\end{equation*}

\noindent
maps a taming map $\cJ\colon \in \mathfrak{J}_V(\mathbb{R}^{2n_v},\omega)$ to the electromagnetic structure $\gamma^{-1}(\cJ) = (\cR,\cI)$ given by:
\begin{equation*}
\cR \eqdef -\mathrm{Re}(\cN)\, , \qquad \cI \eqdef  \mathrm{Im}(\cN)\, ,
\end{equation*}

\noindent where $\cN$ is the complex matrix uniquely defined by $e_a =
\sum_b \cN_{ab} f_b$ in terms of the canonical symplectic basis $\cE =
(e_1 , \hdots , e_{n_v} , f_1,\hdots , f_{n_v})$ of
$(\mathbb{R}^{2n_v},\omega)$.
\end{proof}
\noindent By the previous proposition, an electromagnetic structure
can be equivalently described in terms of a taming map $\cJ$ and a
local scalar-electromagnetic structure can be denoted simply by pair
$(\cG,\cJ)$. This description of local electromagnetic structures is
particularly convenient for the global geometric formulation of
bosonic supergravity, as we discuss in section
\ref{sec:geometricsugra}.

The gauge fields $\left\{ A_{\Lambda}\right\}$ integrating $\left\{
G_{\Lambda}\right\}$ are usually referred to as the \emph{electric
	gauge fields}, whereas the one-forms $\left\{ A^{\Lambda}\right\}$
integrating $\left\{ F^{\Lambda}\right\}$ are usually referred as
their dual \emph{magnetic gauge fields}.

\begin{definition}
A vector-valued one-form $A\in \Omega^1(U,\mathbb{R}^{2n_v})$ is said to be {\bf twisted selfdual} with respect to the Lorentzian metric $g$, the scalar map $\phi\colon U\to \mathbb{R}^{n_s}$ and a taming map $\cJ$ if:
\begin{equation*}
\ast_g\cV = - \cJ(\phi)(\cV)\, , 
\end{equation*}
	
\noindent
where $\cV = \dd A$ and $\ast_g$ is the Hodge dual associated to $g$ and the given orientation on $U$. 
\end{definition}

\noindent
The global geometric interpretation of the local gauge fields
$(A^{\Lambda},A_{\Lambda})$ and their role in the formulation of
supergravity is investigated in \cite{LazaroiuShahbazi} through the
implementation of Dirac quantization. Since we will not consider Dirac
quantization in this review, we will consider instead the
\emph{classical} configuration space of local bosonic supergravity.

\begin{definition} The {\bf classical configuration space} $\Conf_U(\cG,\cJ)$ of
the local bosonic supergravity on $U$ associated to $(\cG,\cJ)$, where
$\cG$ is a Riemannian metric on $V$ and $\cJ$ is a taming map on $V$,
is defined as the following set:
\begin{equation*}
\Conf_U(\cG,\cJ) \eqdef \left\{ (g,\phi, \cV)\,\,\vert\,\, \ast\cV = - \cJ(\phi)(\cV)\, , \,\, g\in \mathrm{Lor}(U)\, , \,\, \phi \in C^{\infty}(U,V)\, ,\,\,  \cV \in \Omega^2(U,\mathbb{R}^{2n_v})\right\}\, .
\end{equation*}
 
\noindent
The {\bf solution space} $\Sol_U(\cG,\cJ)$ of the local bosonic supergravity associated to $(\cG,\cJ)$ is the subset of $\Conf_U(\cG,\cJ)$ whose elements satisfy the equations of motion of bosonic supergravity.
\end{definition}

\begin{definition} Let $(\cG,\cR,\cI)$ be a scalar-electromagnetic
structure. The {\bf scalar energy momentum tensor} associated to
$(\cG,\cR,\cI)$ is the following map:
\begin{eqnarray*}
\cT(\cG)\colon \Conf_U(\cG,\cR,\cI) \to \Gamma(T^{\ast}U\odot T^{\ast}U)\, ,\\
(g,\phi,\cV) \mapsto \cG_{ij}(\phi) \dd\phi^i \odot \dd \phi^j - \frac{1}{2} g \cG_{ij}(\phi) \partial_c \phi^i \partial^c \phi^j\, .
\end{eqnarray*}	

\noindent
The {\bf gauge energy momentum tensor} associated to $(\cG,\cR,\cI)$ is the following map:
\begin{eqnarray*}
\cT(\cR,\cI) = \cT(\cJ)\colon \Conf_U(\cG,\cR,\cI) \to \Gamma(T^{\ast}U\odot T^{\ast}U)\, , \\ 
(g,\phi,\cV) \mapsto  2 \cI_{\Lambda \Sigma}(\phi) F^{\Lambda}_{ac} F^{\Sigma c}_b \dd x^a \odot \dd x^b - \frac{1}{2} g \cI_{\Lambda \Sigma}(\phi)  F^{\Lambda}_{cd} F^{\Sigma cd}\, .
\end{eqnarray*}	

\noindent
The sum:
\begin{equation*}
\cT(\cG,\cR,\cI) \eqdef \cT(\cG) + \cT(\cR,\cI)\colon\Conf_U(\cG,\cR,\cI) \to \Gamma(T^{\ast}U\odot T^{\ast}U)\, ,
\end{equation*} 

\noindent
is the {\bf energy momentum tensor} of the local supergravity associated to $(\cG,\cR,\cI)$.
\end{definition}

\begin{remark}
The Einstein equations of the local bosonic supergravity associated to $(\cG,\cJ)$ can be written in terms of the energy momentum tensor simply as:
\begin{equation*}
\G^g = \cT(\cG,\cJ)(g,\phi,\cV)\, ,
\end{equation*}

\noindent
for $(g,\phi,\cV)\in \Conf_U(\cG,\cJ)$.
\end{remark}

\noindent
The gauge energy momentum tensor admits a convenient formulation in terms of the taming associated to $(\cR,\cI)$.

\begin{lemma}
\label{lemma:TGequivariant}
The following formula holds:
\begin{equation*}
\cT(\cJ)(g,\phi,\cV) =  \omega(\cV_{ac} , \cJ \cV_{b}^{\,\, c})\, \dd x^a\odot \dd x^b\, ,
\end{equation*}

\noindent
for every $(g,\phi,\cV)\in \Conf_U(\cG,\cR,\cI)$, where $\cJ = \gamma(\cR,\cI)$.
\end{lemma}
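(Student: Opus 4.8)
The plan is to reduce the statement to the twisted self-duality relation together with one purely algebraic identity for the Hodge operator on two-forms in four Lorentzian dimensions. Since $(g,\phi,\cV)\in\Conf_U(\cG,\cR,\cI)$ satisfies $\ast\cV = -\cJ(\phi)(\cV)$, I would first replace $\cJ(\phi)(\cV)$ by $-\ast\cV$, so that
\[
\omega(\cV_{ac},\cJ\,\cV_{b}{}^{c}) = -\,\omega(\cV_{ac},(\ast\cV)_{b}{}^{c}).
\]
Decomposing $\cV=(F,G)$ along the canonical symplectic basis $\cE$ of $\mathbb{R}^{2n_v}$, with $G_\Lambda = \cR_{\Lambda\Sigma}F^\Sigma - \cI_{\Lambda\Sigma}\ast F^\Sigma$ as in Lemma \ref{lemma:twistedselfdual}, and using that the only nonvanishing basis pairings are $\omega(e_\Lambda,f_\Sigma)=-\delta_{\Lambda\Sigma}$ (and its antisymmetric counterpart), the diagonal contributions drop out and one is left with
\[
-\,\omega(\cV_{ac},(\ast\cV)_{b}{}^{c}) = F^\Lambda_{ac}(\ast G_\Lambda)_{b}{}^{c} - G_{\Lambda,ac}(\ast F^\Lambda)_{b}{}^{c}.
\]

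Next I would eliminate $G$ in favour of $F$. Applying $\ast$ to the expression for $G_\Lambda$ and using $\ast\ast=-\mathrm{id}$ on two-forms in four Lorentzian dimensions (which is forced by $\cJ^2=-1$ together with twisted self-duality) gives $\ast G_\Lambda = \cR_{\Lambda\Sigma}\ast F^\Sigma + \cI_{\Lambda\Sigma}F^\Sigma$. Substituting both $G_\Lambda$ and $\ast G_\Lambda$, the terms proportional to $\cR$ assemble into $\cR_{\Lambda\Sigma}F^\Lambda_{ac}(\ast F^\Sigma)_{b}{}^{c} - \cR_{\Lambda\Sigma}F^\Sigma_{ac}(\ast F^\Lambda)_{b}{}^{c}$, which vanishes after relabelling the summation indices and invoking the symmetry of $\cR$; this is the algebraic reflection of the fact that the theta-angle matrix $\cR$ is topological and does not enter the energy-momentum tensor. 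What survives is
\[
-\,\omega(\cV_{ac},(\ast\cV)_{b}{}^{c}) = \cI_{\Lambda\Sigma}\big[F^\Lambda_{ac}F^\Sigma_{b}{}^{c} + (\ast F^\Sigma)_{ac}(\ast F^\Lambda)_{b}{}^{c}\big].
\]

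The final and most delicate step is to convert the $(\ast F)(\ast F)$ contraction into ordinary $FF$ contractions, which I would do with the four-dimensional identity
\[
(\ast F)_{ac}(\ast H)_{b}{}^{c} + (\ast H)_{ac}(\ast F)_{b}{}^{c} = F_{ac}H_{b}{}^{c} + H_{ac}F_{b}{}^{c} - g_{ab}\,F_{cd}H^{cd},
\]
valid for any two two-forms $F,H$; it follows from the contraction formula for two volume forms in Lorentzian signature, and can be checked quickly by tracing both sides (each yields $-2F_{cd}H^{cd}$, using $(\ast F)_{cd}(\ast H)^{cd}=-F_{cd}H^{cd}$). Since $\cI_{\Lambda\Sigma}$ is symmetric and the final tensor is symmetrized in $(a,b)$ through $\dd x^a\odot\dd x^b$, contracting this identity with $\cI_{\Lambda\Sigma}$ yields $\cI_{\Lambda\Sigma}(\ast F^\Sigma)_{ac}(\ast F^\Lambda)_{b}{}^{c} = \cI_{\Lambda\Sigma}F^\Lambda_{ac}F^\Sigma_{b}{}^{c} - \tfrac12 g_{ab}\cI_{\Lambda\Sigma}F^\Lambda_{cd}F^{\Sigma cd}$, so that
\[
-\,\omega(\cV_{ac},(\ast\cV)_{b}{}^{c}) = 2\,\cI_{\Lambda\Sigma}F^\Lambda_{ac}F^\Sigma_{b}{}^{c} - \tfrac12 g_{ab}\,\cI_{\Lambda\Sigma}F^\Lambda_{cd}F^{\Sigma cd}.
\]
Multiplying by $\dd x^a\odot\dd x^b$ and recognising $g_{ab}\dd x^a\odot\dd x^b=g$ reproduces verbatim the definition of $\cT(\cR,\cI)=\cT(\cJ)$. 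I expect the only real obstacle to be bookkeeping the Lorentzian signs, especially $\ast\ast=-\mathrm{id}$ and $(\ast F)_{cd}(\ast H)^{cd}=-F_{cd}H^{cd}$, and fixing the numerical coefficients in the Hodge identity; the conceptual content is merely the cancellation of the $\cR$-terms and the duality-invariance of the Maxwell stress tensor encoded in that identity.
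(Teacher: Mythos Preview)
Your proof is correct and follows essentially the same route as the paper, with one cosmetic difference: the paper expands $\omega(\cV_{ac},\cJ\,\cV_b{}^{c})$ directly using the explicit block form of $\cJ$ in terms of $(\cR,\cI)$ and then substitutes $G=\cR F-\cI\ast F$, whereas you first invoke twisted self-duality to replace $\cJ\,\cV$ by $-\ast\cV$ and only then expand in components. Both computations land on the same intermediate expression $\cI_{\Lambda\Sigma}\big[F^\Lambda_{ac}F^{\Sigma}{}_b{}^{c}+(\ast F^\Lambda)_{ac}(\ast F^\Sigma)_b{}^{c}\big]$ and close with the same four-dimensional Hodge identity (the paper states it as $\ast F^t_{ac}\cI\ast F_b{}^{c}=F^t_{ac}\cI F_b{}^{c}-\tfrac12 g_{ab}F^t_{cd}\cI F^{cd}$). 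Your variant has the mild advantage that the cancellation of the $\cR$-terms is isolated cleanly and interpreted as the topological nature of the theta term, while the paper's direct expansion hides this inside the matrix algebra; conversely, the paper's route does not require the configuration-space constraint $\ast\cV=-\cJ(\phi)\cV$ until the very end, so it actually establishes the identity for arbitrary $\cV$ of the form $(F,G(\phi))$ rather than only for twisted self-dual ones.
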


\begin{proof}
Write $\cV = (F,G(\phi))^t$. We compute:
\begin{equation*}
\omega(\cV_{ac} , \cJ \cV_{b}^{\,\, c}) = F^t_{ac}  (\cI + \cR \cI^{-1} \cR) F_{b}^{\,\, c}    + G^t_{ac}  \cI^{-1} G_{b}^{\,\, c} - 2 F^t_{ac}  \cR \cI^{-1} G_{b}^{\,\, c} = F^t_{ac} \cI F_{b}^{\,\, c}  + \ast F^t_{ac} \cI \ast F_{b}^{\,\, c}  \, .
\end{equation*}

\noindent
Using the relation:
\begin{equation*}
\ast F^t_{ac} \cI \ast F_{b}^{\,\, c} = F^t_{ac} \cI F_{b}^{\,\, c} - \frac{1}{2} F^t_{cd} \cI F^{cd} g_{ab}\, ,
\end{equation*}

\noindent
we conclude.
\end{proof}

\noindent Since the Maxwell equations reduce simply to the condition
$\dd\cV= 0$, every solution $\cV$ is locally integrable and thus we
can write:
\begin{equation}
\cV =
\begin{pmatrix} 
F^{\Lambda} \\
G_{\Lambda}
\end{pmatrix} 
= 
\begin{pmatrix} 
\dd A^{\Lambda}  \\
\dd A_{\Lambda}
\end{pmatrix}  \, , \qquad \Lambda = 1, \hdots , n_v\, .
\end{equation}


\subsection{Duality transformations of the local equations}
\label{sec:globalsymlocal}


A precise understanding of the group of duality transformations of the
local equations is crucial to construct the global geometric
formulation of bosonic extended supergravity. By 
\emph{duality transformations} we refer here to symmetries of the local
supergravity equations which do not involve diffeomorphisms of $U$,
that is, symmetries that \emph{cover} the identity on $U$. We are
especially interested in global symmetries of the equations of motion
that may not preserve the action functional
\eqref{eq:localsugra}. These symmetries extend to supergravity the
well-known electromagnetic duality transformations occurring in
standard electromagnetism \cite{Deser:1976iy,Deser:1981fr} and are a
key ingredient of bosonic supergravity in four Lorentzian dimensions
and its connection to string theory. Denote by
$\Sp(2n_v,\mathbb{R})\subset \Aut(\mathbb{R}^{2n_v})$ the group of
automorphisms preserving the standard symplectic form $\omega$ and by
$\Diff(V)$ the group of diffeomorphisms of $V$ preserving its fixed
orientation. In order to characterize the duality transformations of
local bosonic supergravity we recall first that the group
$\Diff(V)\times \Sp(2n_v , \mathbb{R})$ has a natural action
$\mathbb{A}$ on $\mathrm{Lor}(U)\times C^{\infty}(U,V)\times
\Omega^2(U,\mathbb{R}^{2n_v})$ given by:
\begin{eqnarray}
\label{eq:Aaction}
& \mathbb{A}\colon \Diff(V)\times \Sp(2n_v , \mathbb{R})\times  \mathrm{Lor}(U)\times C^{\infty}(U,V)\times \Omega^2(U,\mathbb{R}^{2n_v})\to \mathrm{Lor}(U)\times C^{\infty}(U,V)\times \Omega^2(U,\mathbb{R}^{2n_v})\, ,\nn \\
& (f,\mathfrak{A},g,\phi,\cV)\mapsto (g,f\circ \phi, \mathfrak{A}\,\cV)\, . 
\end{eqnarray}

\noindent
For every $(f,\mathfrak{A})\in \Diff(V)\times \Sp(2n_v , \mathbb{R})$ we define:
\begin{equation*}
\mathbb{A}_{f,\mathfrak{A}} \colon  \mathrm{Lor}(U)\times C^{\infty}(U,V)\times \Omega^2(U,\mathbb{R}^{2n_v})\to \mathrm{Lor}(U)\times C^{\infty}(U,V)\times \Omega^2(U,\mathbb{R}^{2n_v})\, , \,\, (g,\phi,\cV)\mapsto (g,f\circ \phi, \mathfrak{A}\,\cV)\, .
\end{equation*}

\noindent This action does not preserve the configuration space
$\Conf_U(\cG,\cJ)$ of a given local supergravity associated to the
local electromagnetic structure $(\cG,\cJ)$. There is however an
important subgroup of $\Diff(V)\times \Sp(2n_v , \mathbb{R})$, the
so-called \emph{U-duality group}, which does preserve both the
configuration and solution spaces of the given supergravity. In order to
characterize it we recall first the natural left-action of the group
$\Diff(V)\times \Sp(2n_v,\mathbb{R})$ on the set of taming maps, given
by:
\begin{equation*}
\cJ\mapsto \mathfrak{A}\, (\cJ\circ f^{-1})\, \mathfrak{A}^{-1}\, ,
\end{equation*}
	
\noindent
for every $(f,\mathfrak{A})\in \Diff(V)\times \Sp(2n_v,\mathbb{R})$ and every taming map $\cJ\colon V\to \mathrm{Aut}(\mathbb{R}^{2n_v})$. Given $(f,\mathfrak{A})\in \Diff(V)\times \Sp(2n_v,\mathbb{R})$ and a taming map $\cJ\colon V\to \mathrm{Aut}(\mathbb{R}^{2n_v})$, in the following we define:
\begin{equation*}
\cJ^f_\mathfrak{A} \eqdef \mathfrak{A}\, (\cJ\circ f^{-1})\, \mathfrak{A}^{-1}\, .
\end{equation*}

\begin{lemma}
\label{lemma:equivariancecT}
The total energy momentum tensor of local bosonic supergravity satisfies:
\begin{equation*}
\cT(f_{\ast}\cG,\cJ^f_{\mathfrak{A}})(\hat{g},\hat{\phi},\hat{\cV}) = \cT(\cG,\cJ)(g,\phi,\cV)\, ,
\end{equation*}

\noindent
where $(\hat{g},\hat{\phi},\hat{\cV}) = (g,f\circ \phi , \mathfrak{A}\cV)$ for $(f,\mathfrak{A})\in \Diff(V)\times \Sp(2n_v,\mathbb{R})$.  
\end{lemma}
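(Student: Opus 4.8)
The plan is to exploit the decomposition $\cT(\cG,\cJ) = \cT(\cG) + \cT(\cJ)$ into its scalar and gauge pieces and to establish the invariance of each summand separately. The crucial structural observation, which makes this splitting effective, is that under the action \eqref{eq:Aaction} the Lorentzian metric $g$ on $U$ is left untouched: $\hat g = g$. Consequently the Hodge operator $\ast = \ast_g$ and every index contraction performed with $g$ (in particular the raising of $c$ in $\cV_b^{\,\,c}$ and the trace term of $\cT(\cG)$) are literally the same on both sides of the claimed identity, so that any lemma phrased in terms of $g$-dependent operations applies verbatim before and after the transformation.

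First I would treat the scalar energy momentum tensor, which does not depend on $\cV$; hence the replacement $\cV\mapsto\mathfrak{A}\cV$ is irrelevant and only $\phi\mapsto f\circ\phi$, $\cG\mapsto f_\ast\cG$ enter. Writing $\hat\phi = f\circ\phi$ and using the chain rule $\partial_a\hat\phi^i = (\partial_k f^i)(\phi)\,\partial_a\phi^k$, the defining property $f^\ast(f_\ast\cG) = \cG$ of the pushforward metric (equivalently, that $f$ is an isometry $(V,\cG)\to(V,f_\ast\cG)$) yields $(f_\ast\cG)_{ij}(\hat\phi)\,\partial_a\hat\phi^i\,\partial_b\hat\phi^j = \cG_{kl}(\phi)\,\partial_a\phi^k\,\partial_b\phi^l$. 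Since $g$ is unchanged, the same identity holds for the trace term, and therefore $\cT(f_\ast\cG)(\hat g,\hat\phi,\hat\cV) = \cT(\cG)(g,\phi,\cV)$.

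For the gauge energy momentum tensor I would invoke Lemma \ref{lemma:TGequivariant}, whose argument must lie in the relevant configuration space, so a preliminary check is that $(\hat g,\hat\phi,\hat\cV)\in\Conf_U(f_\ast\cG,\cJ^f_{\mathfrak{A}})$. Here the key cancellation is $\cJ^f_{\mathfrak{A}}(\hat\phi) = \mathfrak{A}\,(\cJ\circ f^{-1})(f\circ\phi)\,\mathfrak{A}^{-1} = \mathfrak{A}\,\cJ(\phi)\,\mathfrak{A}^{-1}$; combined with $\ast_{\hat g}\hat\cV = \ast_g(\mathfrak{A}\cV) = \mathfrak{A}\,\ast_g\cV = -\mathfrak{A}\,\cJ(\phi)\cV$ this gives $\ast_{\hat g}\hat\cV = -\cJ^f_{\mathfrak{A}}(\hat\phi)\hat\cV$, so the transformed triple is twisted self-dual for the transformed taming. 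Applying Lemma \ref{lemma:TGequivariant} on both sides then reduces the claim to comparing $\omega(\hat\cV_{ac},\cJ^f_{\mathfrak{A}}(\hat\phi)\hat\cV_b^{\,\,c})$ with $\omega(\cV_{ac},\cJ(\phi)\cV_b^{\,\,c})$. Substituting $\hat\cV = \mathfrak{A}\cV$ and $\cJ^f_{\mathfrak{A}}(\hat\phi) = \mathfrak{A}\,\cJ(\phi)\,\mathfrak{A}^{-1}$, the inner pair of factors $\mathfrak{A}^{-1}\mathfrak{A}$ cancels, leaving $\omega(\mathfrak{A}\cV_{ac},\mathfrak{A}\,\cJ(\phi)\cV_b^{\,\,c})$, which equals $\omega(\cV_{ac},\cJ(\phi)\cV_b^{\,\,c})$ precisely because $\mathfrak{A}\in\Sp(2n_v,\mathbb{R})$ preserves $\omega$.

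The main obstacle is bookkeeping rather than genuine analytic difficulty: one must track exactly where each metric-dependent operation lives, since Lemma \ref{lemma:TGequivariant} can be applied identically on both sides only because $\ast$ and the index raising in $\cV_b^{\,\,c}$ are taken with the \emph{same} metric $g=\hat g$. Once the two algebraic facts are isolated—the isometry identity $f^\ast(f_\ast\cG)=\cG$ for the scalar part, and the conjugation $\cJ^f_{\mathfrak{A}}(f\circ\phi)=\mathfrak{A}\,\cJ(\phi)\,\mathfrak{A}^{-1}$ together with $\mathfrak{A}^\ast\omega=\omega$ for the gauge part—both summands transform trivially and the result follows by adding them.
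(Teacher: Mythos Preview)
Your proposal is correct and follows essentially the same route as the paper: split $\cT$ into its scalar and gauge pieces, handle the scalar piece via the chain rule and the identity $f^\ast(f_\ast\cG)=\cG$, and handle the gauge piece by invoking Lemma~\ref{lemma:TGequivariant} together with the symplectic invariance of $\omega$ under $\mathfrak{A}$. Your version is in fact slightly more careful than the paper's, since you explicitly verify that $(\hat g,\hat\phi,\hat\cV)\in\Conf_U(f_\ast\cG,\cJ^f_{\mathfrak{A}})$ before applying Lemma~\ref{lemma:TGequivariant}, a point the paper only establishes later in the proof of Theorem~\ref{thm:equivsolutions}.
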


\begin{proof}
We compute:
\begin{equation*}
\hat{\cG}_{ij}(\hat{\phi}) \partial_a \hat{\phi}^i \partial_b \hat{\phi}^j = \hat{\cG}_{ij}(f\circ\phi) \partial_a (f\circ\phi)^i \partial_b (f\circ\phi)^j = \hat{\cG}_{ij}(f\circ\phi) \partial_k f^i \partial_l f^j\, \partial_a \phi^k \partial_b \phi^l = \cG_{ij}(\phi)\partial_a \phi^k \partial_b \phi^l\, ,
\end{equation*} 

\noindent
where we have used that:
\begin{equation*}
\hat{\cG}_{ij} = (f_{\ast}\cG)_{ij} = \cG_{kl}\circ f^{-1}\, \partial_i (f^{-1})^k \partial_j (f^{-1})^l\, .
\end{equation*}

\noindent
This proves the statement for the scalar energy momentum tensor, that is:
\begin{equation*}
\cT(f_{\ast}\cG)(\hat{g},\hat{\phi},\hat{\cV}) = \cT(\cG)(g,\phi,\cV)\, .
\end{equation*}

\noindent
The statement for the gauge energy momentum tensor follows directly from Lemma \ref{lemma:TGequivariant}, which implies:
\begin{equation*}
\cT(\cJ^f_\mathfrak{A})(\hat{g},\hat{\phi},\hat{\cV}) = \cT(\cG)(g,\phi,\cV)\, ,
\end{equation*}

\noindent
and hence we conclude.
\end{proof}


 
\begin{thm}
\label{thm:equivsolutions}
For every $(f,\mathfrak{A})\in \Diff(V)\times \Sp(2n_v , \mathbb{R})$, the map $\mathbb{A}_{f,\mathfrak{A}}$ induces by restriction a bijection:
\begin{equation*}
\mathbb{A}_{f,\mathfrak{A}}\colon \Conf_U(\cG,\cJ)\to \Conf_U(f_{\ast}\cG,\cJ^f_A)\, , 
\end{equation*}

\noindent
such that it further restricts to a bijection of the corresponding spaces of solutions:
\begin{equation*}
\mathbb{A}_{f,\mathfrak{A}}\colon \Sol_U(\cG,\cJ)\to \Sol_U(f_{\ast}\cG,\cJ^f_{\mathfrak{A}})\, ,
\end{equation*}

\noindent
where $f_{\ast}\cG$ is the push-forward of $\cG$ by $f\colon V\to V$ and $\cJ^{f}_{\mathfrak{A}}  \eqdef \mathfrak{A}\,(\cJ\circ f^{-1})\mathfrak{A}^{-1}$
\end{thm}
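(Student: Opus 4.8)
The plan is to establish the statement in two steps: first that $\mathbb{A}_{f,\mathfrak{A}}$ restricts to a bijection of configuration spaces, and then that it preserves the equations of motion. Throughout I use that $\mathbb{A}$ is a left action of $\Diff(V)\times\Sp(2n_v,\mathbb{R})$, so that on the ambient space $\mathrm{Lor}(U)\times C^{\infty}(U,V)\times\Omega^2(U,\mathbb{R}^{2n_v})$ the map $\mathbb{A}_{f,\mathfrak{A}}$ is a bijection with inverse $\mathbb{A}_{f^{-1},\mathfrak{A}^{-1}}$; I also note $\mathbb{A}_{f,\mathfrak{A}}=\mathbb{A}_{f,\id}\circ\mathbb{A}_{\id,\mathfrak{A}}$, which lets me treat the symplectic and the diffeomorphism factors separately.

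For the configuration spaces, let $(g,\phi,\cV)\in\Conf_U(\cG,\cJ)$, so $\ast\cV=-\cJ(\phi)(\cV)$, and put $(\hat g,\hat\phi,\hat\cV)=(g,f\circ\phi,\mathfrak{A}\cV)$. The ambient conditions are clearly preserved, since $f\in\Diff(V)$ and $\mathfrak{A}\in\Aut(\mathbb{R}^{2n_v})$. As $\mathfrak{A}$ is a constant automorphism it commutes with the Hodge operator on the form factor, so $\ast\hat\cV=\mathfrak{A}\,\ast\cV=-\mathfrak{A}\,\cJ(\phi)(\cV)$; on the other hand, evaluating $\cJ^f_{\mathfrak{A}}=\mathfrak{A}\,(\cJ\circ f^{-1})\,\mathfrak{A}^{-1}$ at $\hat\phi=f\circ\phi$ yields $\cJ^f_{\mathfrak{A}}(\hat\phi)=\mathfrak{A}\,\cJ(\phi)\,\mathfrak{A}^{-1}$, whence $-\cJ^f_{\mathfrak{A}}(\hat\phi)(\hat\cV)=-\mathfrak{A}\,\cJ(\phi)\,\mathfrak{A}^{-1}\mathfrak{A}\cV=\ast\hat\cV$. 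Thus $(\hat g,\hat\phi,\hat\cV)\in\Conf_U(f_{\ast}\cG,\cJ^f_{\mathfrak{A}})$. Running the same argument with $(f^{-1},\mathfrak{A}^{-1})$ and using $(f^{-1})_{\ast}(f_{\ast}\cG)=\cG$ together with $(\cJ^f_{\mathfrak{A}})^{f^{-1}}_{\mathfrak{A}^{-1}}=\cJ$ shows that $\mathbb{A}_{f^{-1},\mathfrak{A}^{-1}}$ sends $\Conf_U(f_{\ast}\cG,\cJ^f_{\mathfrak{A}})$ back into $\Conf_U(\cG,\cJ)$; being mutually inverse on the ambient space, the two maps furnish the desired bijection.

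Next I would check that each of the three families of equations of motion is preserved. The Bianchi identities and Maxwell equations are equivalent to $\dd\cV=0$, and this is immediate because $\dd\hat\cV=\dd(\mathfrak{A}\cV)=\mathfrak{A}\,\dd\cV$, the matrix $\mathfrak{A}$ being constant. For the Einstein equations I use their energy-momentum form $\G^g=\cT(\cG,\cJ)(g,\phi,\cV)$: since $\hat g=g$ the left-hand side is unchanged, while Lemma \ref{lemma:equivariancecT} gives $\cT(f_{\ast}\cG,\cJ^f_{\mathfrak{A}})(\hat g,\hat\phi,\hat\cV)=\cT(\cG,\cJ)(g,\phi,\cV)$, so the transformed Einstein equation holds iff the original one does.

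The remaining and most delicate point is the scalar equation \eqref{eq:Scalarlocal}, and I expect this to be the main obstacle. The key step is to rewrite its gauge source $\tfrac{1}{2}\partial_k\cR_{\Lambda\Sigma}F^{\Lambda}_{ab}\ast F^{\Sigma ab}+\tfrac{1}{2}\partial_k\cI_{\Lambda\Sigma}F^{\Lambda}_{ab}F^{\Sigma ab}$ in manifestly $\Sp(2n_v,\mathbb{R})$-covariant form as a pairing $\omega(\cV_{ab},(\partial_k\cJ)\cV^{ab})$ of $\cV$ against the derivative $\partial_k\cJ$ of the taming $\cJ=\gamma(\cR,\cI)$, in the spirit of Lemma \ref{lemma:TGequivariant}; this uses the twisted self-duality $\ast\cV=-\cJ(\phi)(\cV)$ and the explicit form of $\cJ$, and is the one genuinely computational ingredient. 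Granting it, I finish structurally, treating the two factors of $\mathbb{A}_{f,\mathfrak{A}}$ in turn. Under $\mathbb{A}_{\id,\mathfrak{A}}$ the left-hand side $\nabla^g_a(\cG_{ik}(\phi)\partial^a\phi^i)-\tfrac{1}{2}\partial_k\cG_{ij}(\phi)\partial_a\phi^i\partial^a\phi^j$ is untouched, while the source is invariant because $\cV\mapsto\mathfrak{A}\cV$, $\partial_k\cJ\mapsto\mathfrak{A}(\partial_k\cJ)\mathfrak{A}^{-1}$ and $\mathfrak{A}^{t}\omega\mathfrak{A}=\omega$ give $\omega(\mathfrak{A}\cV_{ab},\mathfrak{A}(\partial_k\cJ)\mathfrak{A}^{-1}\mathfrak{A}\cV^{ab})=\omega(\cV_{ab},(\partial_k\cJ)\cV^{ab})$. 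Under $\mathbb{A}_{f,\id}$ the map $f\colon(V,\cG)\to(V,f_{\ast}\cG)$ is an isometry, so the left-hand side is the tension field of $\phi$ and transforms exactly as the components of a one-form on $V$, matching the pullback behaviour of the covariant source; hence the equation is covariant. Combining the three families, $\mathbb{A}_{f,\mathfrak{A}}$ restricts to a bijection $\Sol_U(\cG,\cJ)\to\Sol_U(f_{\ast}\cG,\cJ^f_{\mathfrak{A}})$, with inverse $\mathbb{A}_{f^{-1},\mathfrak{A}^{-1}}$ as before.
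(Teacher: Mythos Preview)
Your proof is correct and follows essentially the same approach as the paper: both use Lemma~\ref{lemma:equivariancecT} for the Einstein equations, the constancy of $\mathfrak{A}$ for the Maxwell equations, and the rewriting of the gauge source of the scalar equation as (a multiple of) $\omega(\cV,\partial_k\cJ(\phi)\,\cV)$ combined with the naturality of the tension field under isometries. Your factorization $\mathbb{A}_{f,\mathfrak{A}}=\mathbb{A}_{f,\id}\circ\mathbb{A}_{\id,\mathfrak{A}}$ is a mild organizational simplification over the paper's direct treatment of the combined transformation, but the key computational ingredient you flag as ``granted'' is exactly the ``tedious calculation'' the paper also leaves unexpanded.
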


\begin{remark}
If we consider a pair $(f , \mathfrak{U})\in \Diff(V)\times \Aut(\mathbb{R}^{2n_v})$, with $\mathfrak{A}$ not necessarily preserving $\omega$, then $\cJ^f_A$ is not guaranteed to be a taming map for the fixed standard symplectic structure $\omega$, a condition which is necessary for $\cJ^f_{\mathfrak{A}}$ to define a local electromagnetic structure. The group $\Diff(V)\times \Aut(\mathbb{R}^{2n_v})$ was discussed in \cite{Hull:1995gk} as the group of \emph{pseudo-dualities} of four-dimensional supergravity.
\end{remark}

\begin{proof}
We compute:
\begin{equation*}
\cJ(\phi)\,\cV = -\ast_g \cV \,\, \Leftrightarrow \,\, (\cJ\circ f^{-1})(f\circ \phi)\, \mathfrak{A}^{-1} \mathfrak{A} \cV = -\mathfrak{A}^{-1}\mathfrak{A} \ast_g  \cV \,\, \Leftrightarrow\,\, \mathfrak{A} \,(\cJ\circ f^{-1})(f\circ \phi)\,\mathfrak{A}^{-1} \mathfrak{A}\cV = -\ast_g (\mathfrak{A}\cV) \, ,
\end{equation*}

\noindent
which is equivalent to $\cJ^f_{\mathfrak{A}}(f\circ \phi)\, \mathfrak{A}\cV = - \ast_g (\mathfrak{A}\cV)$, whence:
\begin{equation*}
\mathbb{A}_{f,\mathfrak{A}}(g,\phi,\cV)\in \Conf_U(f_{\ast}\cG , \cJ^f_A)\, , \qquad \forall \,\,(g,\phi,\cV)\in\Conf_U(\cG,\cJ)\, .
\end{equation*} 

\noindent
The fact that $\mathbb{A}_{f,\mathfrak{A}}$ is a bijection is now clear. In order to prove that $\mathbb{A}_{f,\mathfrak{A}}$ preserves the corresponding spaces of solutions consider $(\hat{g}, \hat{\phi},\hat{\cV})\in \Conf_U(f_{\ast}\cG,\cJ^f_{\mathfrak{A}})$ such that:
\begin{equation*}
(\hat{g}, \hat{\phi},\hat{\cV}) = (g, f\circ \phi, \mathfrak{A} \cV)\, ,
\end{equation*} 

\noindent
for $(f,\mathfrak{A})\in \Diff(V,\cG)\times \Sp(2n_v,\mathbb{R})$ and $(g,\phi,\cV)\in \Sol_U(\cG,\cJ)$. Write now:
\begin{equation*}
\mathfrak{A} =  
\begin{pmatrix} 
a & b \\
c & d 
\end{pmatrix} \, ,
\end{equation*}

\noindent
in terms of $n_v\times n_v$ blocks and:
\begin{equation*}
\hat{\cV}
=   
\begin{pmatrix} 
\hat{F}  \\
\hat{G}(\hat{\phi})
\end{pmatrix} \, , \qquad 
\cV
=   
\begin{pmatrix} 
F  \\
G(\phi)
\end{pmatrix} \, .
\end{equation*}

\noindent
In particular $\hat{F} = a\, F + b\, G(\phi)$. The fact that the solution spaces of the Einstein equations are preserved by $(f,\mathfrak{A})$ follows directly from Lemma \ref{lemma:equivariancecT}, since it implies:
\begin{equation*}
\G^g = \cT(f_{\ast}\cG,\cJ^f_{\mathfrak{A}})(\hat{g},\hat{\phi},\hat{\cV}) = \cT(\cG,\cJ)(g,\phi,\cV)\, ,
\end{equation*}

\noindent
We consider now the scalar equations \eqref{eq:Scalarlocal}, which we evaluate on $(\hat{g}, \hat{\phi},\hat{\cV}) $ and rewrite for convenience as follows:
\begin{equation*}
\nabla^{g,\hat{\cG}(\hat{\phi})}_a \partial^a \hat{\phi}^i + \frac{\hat{\cG}^{ik}(\hat{\phi})}{2} (\partial_k \hat{\cR}_{\Lambda \Sigma}(\hat{\phi}) \hat{F}^{\Lambda}_{ab} \ast \hat{F}^{\Sigma ab} +   \partial_k \hat{\cI}_{\Lambda \Sigma}(\hat{\phi}) \hat{F}^{\Lambda}_{ab} \hat{F}^{\Sigma ab}) = 0\, ,
\end{equation*}

\noindent
where $\nabla^{g,\hat{\cG}(\hat{\phi})}$ is the product connection of the Levi-Civita connection of $g$ and the Levi-Civita connection of $\hat{\cG}(\hat{\phi})$. Using the equivariance properties of the Levi-Civita connection under metric pull-back we obtain:
\begin{equation*}
\nabla^{g,\hat{\cG}(\hat{\phi})}_a \partial^a \hat{\phi}^i   =  \partial_k f^i\,\nabla^{g,\cG(\phi)}_a \partial^a \phi^k\, .
\end{equation*}
 
\noindent 
On the other hand, a tedious calculation shows that:
\begin{eqnarray*}
&\frac{\hat{\cG}^{ik}(\hat{\phi})}{2} (\partial_k \hat{\cR}_{\Lambda \Sigma}(\hat{\phi}) \hat{F}^{\Lambda}_{ab} \ast \hat{F}^{\Sigma ab} +   \partial_k \hat{\cI}_{\Lambda \Sigma}(\hat{\phi}) \hat{F}^{\Lambda}_{ab} \hat{F}^{\Sigma ab})\\ 
& 
= \frac{\hat{\cG}^{ik}(\hat{\phi})}{2}  \omega(\hat{\cV}, \partial_k\hat{\cJ}(\hat{\phi}) \hat{\cV}) =
\partial_l f^i \frac{\cG^{lk}(\phi)}{4} \omega(\cV,
\partial_k\cJ(\phi) \cV) \, ,
\end{eqnarray*}

\noindent whence $(f,\cG)$ maps solutions of the scalar equations to
solutions of the scalar equations. Finally, the solution spaces of the
Maxwell equations \eqref{eq:Maxwelllocal} are also clearly preserved
since:
\begin{equation*}
\dd\hat{\cV} = \dd(\mathfrak{A}\cV) = \mathfrak{A}\dd\cV \, ,
\end{equation*}

\noindent
Therefore, $(\hat{g},\hat{\phi},\hat{\cV})\in \Sol_U(\cG,\cJ)$ if and only if $(g,\phi,\cV)\in \Sol_U(\cG,\cJ)$.
\end{proof}
 
\noindent The previous theorem can be used to characterize which
elements $(f,\mathfrak{U})\in \Diff(V)\times \Sp(2n_v,\mathbb{R})$
define through the action $\mathbb{A}$ symmetries of a the local
supergravity associated to a given local scalar-electromagnetic
structure $(\cG,\cJ)$. Denote in the following by $\Iso(V,\cG)$ the isometry group
of $\cG$.

\begin{cor}
\label{cor:equivarianceccJlocal}
Let $(f,\mathfrak{A})\in \Diff(V)\times \Sp(2n_v , \mathbb{R})$ such that $f\in \Iso(V,\cG)$ and:
\begin{equation}
\label{eq:localequivariancecJ}
\cJ^f_{\mathfrak{A}} = \mathfrak{A}\, (\cJ\circ f^{-1})\, \mathfrak{A}^{-1} = \cJ\, .
\end{equation}
	
\noindent Then $\mathbb{A}_{f,\mathfrak{A}}\colon \Conf_U(\cG,\cJ)\to
\Conf_U(\cG,\cJ)$ is a bijection of the configuration space of the
supergravity defined by the scalar-electromagnetic structure
$(\cG,\cJ)$. In particular, $\mathbb{A}_{f,\mathfrak{A}}$ preserves
the solution space $\Sol_U(\cG,\cJ)$, that is, it maps solutions to
solutions.
\end{cor}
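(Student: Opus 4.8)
The plan is to deduce this statement directly from Theorem \ref{thm:equivsolutions}, whose conclusion already provides the bijections we need, only between the spaces $\Conf_U(\cG,\cJ)$ and $\Conf_U(f_{\ast}\cG,\cJ^f_{\mathfrak{A}})$ (respectively between the corresponding solution spaces). The entire content of the corollary is therefore to observe that the two hypotheses $f\in\Iso(V,\cG)$ and $\cJ^f_{\mathfrak{A}}=\cJ$ are exactly the conditions that force the target pair $(f_{\ast}\cG,\cJ^f_{\mathfrak{A}})$ to coincide with the source pair $(\cG,\cJ)$, so that the bijections of Theorem \ref{thm:equivsolutions} become self-maps.

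First I would record the effect of the isometry hypothesis on the scalar metric. By definition $f\in\Iso(V,\cG)$ means $f^{\ast}\cG=\cG$; since $f$ is a diffeomorphism, its inverse $f^{-1}$ is then also an isometry, and using the identity $f_{\ast}\cG=(f^{-1})^{\ast}\cG$ we obtain $f_{\ast}\cG=\cG$. Combining this with the assumed equivariance condition \eqref{eq:localequivariancecJ}, namely $\cJ^f_{\mathfrak{A}}=\mathfrak{A}\,(\cJ\circ f^{-1})\,\mathfrak{A}^{-1}=\cJ$, we conclude that the two data determining the target supergravity agree with those of the source, so that
\begin{equation*}
\Conf_U(f_{\ast}\cG,\cJ^f_{\mathfrak{A}})=\Conf_U(\cG,\cJ)\, ,\qquad \Sol_U(f_{\ast}\cG,\cJ^f_{\mathfrak{A}})=\Sol_U(\cG,\cJ)\, .
\end{equation*}

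Second I would invoke Theorem \ref{thm:equivsolutions} for the pair $(f,\mathfrak{A})\in\Diff(V)\times\Sp(2n_v,\mathbb{R})$. The theorem yields a bijection $\mathbb{A}_{f,\mathfrak{A}}\colon\Conf_U(\cG,\cJ)\to\Conf_U(f_{\ast}\cG,\cJ^f_{\mathfrak{A}})$ restricting to a bijection $\Sol_U(\cG,\cJ)\to\Sol_U(f_{\ast}\cG,\cJ^f_{\mathfrak{A}})$; substituting the two identifications just established, both target spaces collapse onto the source, and $\mathbb{A}_{f,\mathfrak{A}}$ becomes a bijection of $\Conf_U(\cG,\cJ)$ onto itself carrying $\Sol_U(\cG,\cJ)$ to itself, which is the assertion.

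There is no genuine obstacle here, as the corollary is a specialization of the theorem to the fixed-point locus of the $\Diff(V)\times\Sp(2n_v,\mathbb{R})$-action on scalar-electromagnetic structures; the only piece of bookkeeping worth spelling out is the passage from $f^{\ast}\cG=\cG$ to $f_{\ast}\cG=\cG$, so that the scalar datum is genuinely preserved and not merely mapped to an isometric copy. I would therefore keep the write-up to the two short steps above rather than re-deriving any of the equivariance computations, which are already supplied by Lemma \ref{lemma:equivariancecT} and Theorem \ref{thm:equivsolutions}.
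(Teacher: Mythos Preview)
Your proposal is correct and matches the paper's approach: the paper states this result as a corollary of Theorem~\ref{thm:equivsolutions} with no separate proof, treating it as the immediate specialization you describe. Your explicit note that $f\in\Iso(V,\cG)$ gives $f_\ast\cG=\cG$ (not just $f^\ast\cG=\cG$) is a useful point of bookkeeping that the paper leaves implicit.
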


\noindent A pair $(f,\mathfrak{A})\in \Iso(V,\cG)\times \Sp(2n_v ,
\mathbb{R})$ satisfying equation \eqref{eq:localequivariancecJ} will
be called a \emph{duality transformation}. It is easy to see that two
duality transformations $(f_1, \mathfrak{U}_1)$ and $(f_2,
\mathfrak{U}_2)$ can be composed in the natural way:
\begin{equation*}
(f_1, \mathfrak{A}_1)\circ (f_2, \mathfrak{A}_2) = (f_1\circ f_2, \mathfrak{A}_1\circ \mathfrak{A}_2)\, , 
\end{equation*}

\noindent
whence the set of all duality transformation of the local bosonic supergravity associated to $(\cG,\cJ)$, which we denote by $\U(\cG,\cJ)$, becomes naturally a group. 

\begin{definition}
\label{def:Udualitygroup}
The {\bf electromagnetic U-duality group}, or {\bf U-duality group} for short, of the local bosonic supergravity associated to $(\cG,\cJ)$ is given by:
\begin{equation}
\label{eq:localUduality}
\U(\cG,\cJ) \eqdef \left\{ (f,\mathfrak{A})\in  \Iso(V,\cG)\times \Sp(2n_v,\mathbb{R})\,\, \vert \,\, \mathfrak{A}\, \cJ\, \mathfrak{A}^{-1} = \cJ \circ f \right\}\, .
\end{equation}
\end{definition}

\begin{remark}
By corollary \ref{cor:equivarianceccJlocal}, for every element
$(f,\mathfrak{A})\in \U(\cG,\cJ)$, the map
$\mathbb{A}_{f,\mathfrak{A}}\colon \Conf_U(\cG,\cJ)\to
\Conf_U(\cG,\cJ)$ restricts to a bijection
$\mathbb{A}_{f,\mathfrak{A}}\colon \Sol_U(\cG,\cJ)\to
\Sol_U(\cG,\cJ)$.
\end{remark}

\noindent
Denote by $\Stab_\Sp(\cJ) \subset \Sp(2n_v,\mathbb{R})$ the subgroup of $\Sp(2n_v,\mathbb{R})$ preserving the given taming map $\cJ$, that is:
\begin{equation*}
\Stab_\Sp(\cJ) \eqdef \left\{ \mathfrak{A} \in \Sp(2n_v , \mathbb{R}) \,\, \vert \,\, \mathfrak{A}\, \cJ\, \mathfrak{A}^{-1} = \cJ\right\}\, .
\end{equation*}

\noindent
Then, for every electromagnetic structure $(\cG,\cJ)$ we have the following short exact sequence:
\begin{equation}
\label{eq:shortUduality1}
1 \to \Stab_\Sp(\cJ) \to \U(\cG,\cJ) \to \Iso_{pr}(V,\cG)\to 1\, ,
\end{equation}

\noindent where $\Stab_\Sp(\cJ)$ embeds in $\U(\cG,\cJ)$ through the
map $\mathfrak{A}\mapsto (\mathrm{Id},\mathfrak{A})$ and
$\Iso_{pr}(V,\cG)\subset \Iso(V,\cG)$ is the subgroup of the isometry
group of $(V,\cG)$ that is obtained by projecting $\U(\cG,\cJ)$ onto
its first component in the presentation \eqref{eq:localUduality}. On
the other hand, by definition we have a canonical surjective map:
\begin{equation*}
\U(\cG,\cJ) \to \Sp_{pr}(2n_v,\mathbb{R})\subset \Sp(2n_v,\mathbb{R})\, , \qquad (f,\mathfrak{A})\mapsto \mathfrak{A}\, ,
\end{equation*}

\noindent
fitting in the following short exact sequence:
\begin{equation}
\label{eq:shortUduality2}
1 \to \Stab_\Iso(\cJ) \to \U(\cG,\cJ) \to \Sp_{pr}(2n_v,\mathbb{R}) \to 1\, ,
\end{equation}

\noindent
where:
\begin{equation*}
\Stab_\Iso(\cJ) \eqdef \left\{ f \in \Iso(V,\cG) \,\, \vert \,\, \cJ\circ f = \cJ \right\}\, ,
\end{equation*}

\noindent is the stabilizer of $\cJ$ in $\Iso(V,\cG)$ and
$\Sp_{pr}(2n_v,\mathbb{R})$ the subgroup of $\Sp(2n_v,\mathbb{R})$
that is obtained by projecting $\U(\cG,\cJ)$ onto its second component
in the presentation \eqref{eq:localUduality}. All together, we obtain the following
proposition.

\begin{prop}
\label{prop:localUduality} The electromagnetic U-duality group
$\U(\cG,\cJ)$ of the local supergravity theory associated to the
electromagnetic structure $(\cG,\cJ)$ canonically fits into the short
exact sequences \eqref{eq:shortUduality1} and
\eqref{eq:shortUduality2}.
\end{prop}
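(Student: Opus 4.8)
The plan is to verify directly that $\U(\cG,\cJ)$ sits in both displayed sequences, treating the two sequences symmetrically since they arise from the two coordinate projections of the ambient product group $\Diff(V)\times\Sp(2n_v,\mathbb{R})$. First I would record that the coordinate projections $p_1\colon(f,\mathfrak{A})\mapsto f$ and $p_2\colon(f,\mathfrak{A})\mapsto\mathfrak{A}$ are group homomorphisms on the ambient product, hence restrict to group homomorphisms on the subgroup $\U(\cG,\cJ)$, which is itself a group by the discussion preceding Definition \ref{def:Udualitygroup}. By construction, the images of these restrictions are exactly $\Iso_{pr}(V,\cG)$ and $\Sp_{pr}(2n_v,\mathbb{R})$, so both projections are surjective onto the stated quotients and, in particular, these images are subgroups. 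This immediately yields exactness at the right-hand terms of \eqref{eq:shortUduality1} and \eqref{eq:shortUduality2}.

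For the first sequence, I would check that $\mathfrak{A}\mapsto(\id,\mathfrak{A})$ maps $\Stab_\Sp(\cJ)$ into $\U(\cG,\cJ)$: if $\mathfrak{A}\,\cJ\,\mathfrak{A}^{-1}=\cJ$ then the defining condition of $\U(\cG,\cJ)$ holds with $f=\id$, since $\cJ\circ\id=\cJ$. This map is plainly an injective homomorphism, giving exactness at the left-hand term. The decisive point is exactness in the middle, namely that $\ker p_1$ coincides with this image: an element $(f,\mathfrak{A})\in\U(\cG,\cJ)$ lies in $\ker p_1$ precisely when $f=\id$, and then the defining equivariance relation $\mathfrak{A}\,\cJ\,\mathfrak{A}^{-1}=\cJ\circ\id=\cJ$ collapses exactly to the condition $\mathfrak{A}\in\Stab_\Sp(\cJ)$. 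Hence $\ker p_1=\{(\id,\mathfrak{A}):\mathfrak{A}\in\Stab_\Sp(\cJ)\}$ is the image of the inclusion, which proves \eqref{eq:shortUduality1}.

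The second sequence is entirely parallel, with the roles of the two factors interchanged. One checks that $f\mapsto(f,\id)$ carries $\Stab_\Iso(\cJ)$ into $\U(\cG,\cJ)$, since setting $\mathfrak{A}=\id$ turns the defining condition into $\cJ=\cJ\circ f$, which is the definition of $\Stab_\Iso(\cJ)$; this inclusion is again an injective homomorphism. Finally, $\ker p_2$ consists of those $(f,\mathfrak{A})$ with $\mathfrak{A}=\id$, for which the equivariance relation degenerates to $\cJ\circ f=\cJ$, identifying $\ker p_2$ with the image of $\Stab_\Iso(\cJ)$ and completing the proof of \eqref{eq:shortUduality2}. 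I expect no serious obstacle: the normality of the two left-hand subgroups is automatic because they are realized as kernels of the homomorphisms $p_1$ and $p_2$, and the only real content beyond formal bookkeeping is the observation that the \emph{single} defining relation of $\U(\cG,\cJ)$ specializes to the two separate stabilizer conditions under $f=\id$ and $\mathfrak{A}=\id$ respectively, which is precisely what forces exactness in the middle of both sequences.
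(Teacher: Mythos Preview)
Your proposal is correct and follows essentially the same approach as the paper: the paper does not give a formal proof but simply records the two sequences as arising from the two coordinate projections of $\U(\cG,\cJ)\subset\Iso(V,\cG)\times\Sp(2n_v,\mathbb{R})$, with the kernels identified by specializing the defining relation $\mathfrak{A}\,\cJ\,\mathfrak{A}^{-1}=\cJ\circ f$ to $f=\id$ and $\mathfrak{A}=\id$ respectively. Your write-up just makes the exactness checks at each node explicit.
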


\noindent The short exact sequences \eqref{eq:shortUduality1} and
\eqref{eq:shortUduality2} are very useful to compute the
electromagnetic U-duality group of a given local bosonic supergravity. In
particular, we obtain the following corollary.

\begin{cor} If $\Stab_\Sp(\cJ) = \mathrm{Id}$ then $\U(\cG,\cJ) =
\Iso_{pr}(V,\cG)\subset \Iso(V,\cG)$ canonically becomes a subgroup of
the isometry group of the scalar manifold $(V,\cG)$. If
$\Stab_\Iso(\cJ) = \mathrm{Id}$ is trivial then
\eqref{eq:shortUduality2} yields a canonical embedding
$\U(\cG,\cJ)\hookrightarrow \Sp(2n_v ,\mathbb{R})$ in the symplectic
group $\Sp(2n_v ,\mathbb{R})$. If both $\Stab_\Sp(\cJ) = \mathrm{Id}$
and $\Stab_\Iso(\cJ) = \mathrm{Id}$ then the U-duality group
$\U(\cG,\cJ)$ is canonically isomorphic to a subgroup of $\Iso(V,\cG)$ embedded
in $\Sp(2n_v,\mathbb{R})$.
\end{cor}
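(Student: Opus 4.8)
The plan is to read off all three assertions directly from the two short exact sequences \eqref{eq:shortUduality1} and \eqref{eq:shortUduality2} supplied by Proposition \ref{prop:localUduality}, using only the elementary fact that a short exact sequence $1\to K\to G\to Q\to 1$ with trivial kernel $K=\mathrm{Id}$ forces the quotient homomorphism $G\to Q$ to be injective, hence an isomorphism of $G$ onto $Q$. No genuine computation is involved; everything is a formal consequence of exactness together with the definitions of $\Iso_{pr}(V,\cG)$ and $\Sp_{pr}(2n_v,\mathbb{R})$ as the images of $\U(\cG,\cJ)$ under the two projections.

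First I would treat the hypothesis $\Stab_\Sp(\cJ)=\mathrm{Id}$. Inserting this into \eqref{eq:shortUduality1} collapses the kernel, so the projection onto the first factor $(f,\mathfrak{A})\mapsto f$ restricts to an isomorphism $\U(\cG,\cJ)\xrightarrow{\sim}\Iso_{pr}(V,\cG)$; since $\Iso_{pr}(V,\cG)$ is by construction a subgroup of $\Iso(V,\cG)$, this realizes $\U(\cG,\cJ)$ canonically as a subgroup of the isometry group of $(V,\cG)$, which is the first statement. Symmetrically, under $\Stab_\Iso(\cJ)=\mathrm{Id}$ the kernel in \eqref{eq:shortUduality2} collapses and the projection onto the second factor $(f,\mathfrak{A})\mapsto\mathfrak{A}$ becomes an isomorphism $\U(\cG,\cJ)\xrightarrow{\sim}\Sp_{pr}(2n_v,\mathbb{R})\subset\Sp(2n_v,\mathbb{R})$, yielding the asserted canonical embedding $\U(\cG,\cJ)\hookrightarrow\Sp(2n_v,\mathbb{R})$.

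For the third statement I would combine the two. When both stabilizers are trivial, both projections $(f,\mathfrak{A})\mapsto f$ and $(f,\mathfrak{A})\mapsto\mathfrak{A}$ are injective, so within $\U(\cG,\cJ)$ each of $f$ and $\mathfrak{A}$ determines the other; equivalently, $\U(\cG,\cJ)$ sits inside $\Iso(V,\cG)\times\Sp(2n_v,\mathbb{R})$ as the graph of the isomorphism $\Iso_{pr}(V,\cG)\xrightarrow{\sim}\Sp_{pr}(2n_v,\mathbb{R})$ obtained by composing the first isomorphism of the previous paragraph with the inverse of the second. Under this identification $\U(\cG,\cJ)$ is simultaneously a subgroup of $\Iso(V,\cG)$ and of $\Sp(2n_v,\mathbb{R})$, which is precisely the claim that it is a subgroup of $\Iso(V,\cG)$ embedded in $\Sp(2n_v,\mathbb{R})$.

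Since the argument is purely formal, there is no substantive obstacle. The only points requiring care are bookkeeping: tracking which stabilizer trivializes which sequence, and checking that in the third case the two isomorphisms are compatible so that the graph description is well posed. Both are immediate once one recalls that $\Iso_{pr}(V,\cG)$ and $\Sp_{pr}(2n_v,\mathbb{R})$ are defined as the respective projected images of the single group $\U(\cG,\cJ)$, so the two isomorphisms share the common source $\U(\cG,\cJ)$ and their composite is automatically an isomorphism onto the correct target.
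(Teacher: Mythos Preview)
Your proposal is correct and matches the paper's approach exactly: the corollary is stated immediately after Proposition \ref{prop:localUduality} without proof, as a direct formal consequence of the short exact sequences \eqref{eq:shortUduality1} and \eqref{eq:shortUduality2}. Your explicit unpacking of the exactness argument is precisely what the paper leaves implicit.
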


\noindent  
The previous corollary puts on firm grounds the validity of
a folklore statement made in the literature which states that the
\emph{U-duality group consists of a copy of the isometry group of the
scalar manifold into the symplectic group}. Before presenting some
examples it is useful to formulate local bosonic supergravity in terms
of the \emph{period matrix map}.
 

\subsection{The period matrix map}
\label{sec:periodmatrix}


For computational as well as conceptual purposes it is convenient to develop a local
formulation of the theory in terms of complexified field strength and
couplings, a formulation which gives rise to the concept of
\emph{period matrix} (whose name will be justified in a moment). We
define the \emph{complexified field strengths}:
\begin{equation*}
F^{+} \eqdef F - i\ast F\, , \qquad F^{-} \eqdef F + i\ast F\, ,
\end{equation*}

\noindent
in terms of which the gauge sector of the theory (associated to a given electromagnetic structure $(\cR,\cI)$) is conveniently written, using matrix notation, as follows:
\begin{equation*}
\mathrm{S}^v_l[g_U,\phi, A] \eqdef \frac{i}{4}\int_U\left\{ F^{+ T} \cN F^{+} - F^{- T} \cN^{\ast} F^{-} \right\} \nu_{g_U} \, , 
\end{equation*}

\noindent
where we have defined the \emph{period matrix map}:
\begin{equation*}
\cN \eqdef - \cR + i\cI \colon V \to \mathrm{Sym}(n_v,\mathbb{C})\, .
\end{equation*}

\noindent Since $(\cR,\cI)$ is a local electromagnetic structure, the
map $\cN$ is in fact a function on $V$ valued in Siegel upper space
$\mathbb{SH}(n_v)\subset \Mat(n_v,\mathbb{C})$ of square $n_v\times
n_v$ complex matrices with positive definite imaginary part. For ease
of notation, we define:
\begin{equation*}
\cN(\phi) = - \cR(\phi) + i\cI(\phi) \eqdef \cN \circ \phi \colon U \to \mathbb{SH}(n_v)\, ,
\end{equation*}

\noindent to which we will refer as the \emph{scalar} period matrix
map.

\begin{remark} The term \emph{period matrix} is motivated by the role
played by $\cN$ when the bosonic supergravity theory under
consideration corresponds to the effective theory of a Type-IIB
compactification on a Calabi-Yau three-fold $X$. In such situation,
$\cN$ establishes linear relations between the periods on the moduli
space of complex structures of $X$ with respect to a symplectic basis
of the third homology group of $X$ \cite{Fre:1995dw}.
\end{remark}

\noindent The twisted self-duality condition can be described in a
very natural manner by introducing complexified field
strengths. Recall that the complexification of the vector space of
real vector-valued two-forms $\Omega^2(U,\mathbb{R}^{2n_v})$ is given
by $\Omega^2(U,\mathbb{C}^{2n_v})$, the vector space of two-forms
taking values in the complex vector space $\mathbb{C}^{2n_v}$. We
define:
\begin{equation*}
G^{\pm} \eqdef\frac{1}{2} ( G \mp i \ast G)\in \Omega^2(U,\mathbb{C}^{n_v})\, , \qquad \cV^{\pm} \eqdef \frac{1}{2} (\cV \mp i \ast \cV) \in \Omega^2(U,\mathbb{C}^{2n_v})\, .
\end{equation*}

\noindent
Note that we have:
\begin{equation*}
\ast F^{\pm} = \pm i F^{\pm}\, .
\end{equation*}

\begin{prop}
\label{prop:cVcN}
A vector-valued two-form $\cV \in \Omega^2(U,\mathbb{R}^{2n_v})$ is twisted self-dual with respect to a scalar map $\phi$ and a taming map:
\begin{equation*}
\cJ= 
\begin{pmatrix} 
- \cI^{-1} \cR & \cI^{-1} \\
-\cI - \cR\cI^{-1}\cR & \cR \cI^{-1} 
\end{pmatrix} \colon V\to \Aut(\mathbb{R}^{2n_v}) \, ,
\end{equation*}

\noindent
if and only if:
\begin{equation}
\cV^{+} =
\begin{pmatrix} 
F^{+} \\
\cN^{\ast}(\phi) F^{+}
\end{pmatrix} 
\end{equation}
	\noindent
for a complex self-dual two-form $F^{+} = \frac{1}{2}(F - i\ast F)$, where $\cN= \cR+ i\cI\colon V \to \mathbb{SH}(n_v)$. 
\end{prop}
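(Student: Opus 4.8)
The plan is to combine the characterization of twisted self-dual two-forms from Lemma \ref{lemma:twistedselfdual} with a direct computation of the self-dual part, exploiting the fact that on two-forms in four Lorentzian dimensions the Hodge operator satisfies $\ast^2 = -1$, so that $\ast F^{+} = i F^{+}$ for any $F^{+} = \frac{1}{2}(F - i \ast F)$ (as already recorded in the text). By Lemma \ref{lemma:twistedselfdual}, a real two-form $\cV \in \Omega^2(U,\mathbb{R}^{2n_v})$ is twisted self-dual with respect to $\phi$ and $\cJ$ if and only if it has the form $\cV = (F, G)^{t}$ with $F \in \Omega^2(U,\mathbb{R}^{n_v})$ and $G = \cR(\phi) F - \cI(\phi) \ast F$. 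The whole proposition then reduces to translating this parametrization into the complexified language of $\cV^{+}$.

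For the forward direction I would assume $\cV = (F,G)^t$ twisted self-dual and simply compute the self-dual component $\cV^{+} = \frac{1}{2}(\cV - i \ast \cV)$ block by block. Its first block is $F^{+} = \frac{1}{2}(F - i\ast F)$ by definition, which is self-dual. For the second block, substituting $\ast G = \cR \ast F + \cI F$ (obtained from $G = \cR F - \cI \ast F$ using $\ast^2 = -1$) gives
\begin{equation*}
G^{+} = \tfrac{1}{2}(G - i \ast G) = \tfrac{1}{2}\big(\cR(F - i\ast F) - \cI(\ast F + iF)\big) = \tfrac{1}{2}(\cR - i\cI)(F - i\ast F) = \cN^{\ast}(\phi)\, F^{+},
\end{equation*}
since $\ast F + i F = i(F - i\ast F)$ and $\cN = \cR + i\cI$. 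This yields exactly $\cV^{+} = (F^{+}, \cN^{\ast}(\phi) F^{+})^{t}$.

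For the converse I would start from $\cV^{+} = (F^{+}, \cN^{\ast}(\phi)F^{+})^{t}$ with $\ast F^{+} = i F^{+}$ and reconstruct the real two-form $\cV \eqdef 2\,\Re\, \cV^{+} = \cV^{+} + \overline{\cV^{+}}$. Its first block is $F \eqdef 2\,\Re\, F^{+}$, and one checks $F^{+} = \frac{1}{2}(F - i\ast F)$ using $\ast \overline{F^{+}} = -i\overline{F^{+}}$. The second block is $G \eqdef 2\,\Re(\cN^{\ast}(\phi)F^{+}) = \cN^{\ast}F^{+} + \cN\,\overline{F^{+}}$, and expanding it with $F^{+} - \overline{F^{+}} = -i\ast F$ collapses it to $G = \cR(\phi)F - \cI(\phi)\ast F$. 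Invoking Lemma \ref{lemma:twistedselfdual} once more shows that $\cV$ is twisted self-dual, closing the equivalence.

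Conceptually, the statement reflects that $\cJ$ and $\ast$ are commuting complex structures (on the fibre $\mathbb{R}^{2n_v}$ and on two-forms respectively), each with $\pm i$ eigenspaces after complexification, the $(-i)$-eigenspace of $\cJ$ being precisely $\{(x, \cN^{\ast} x)^{t}\}$. The sole subtlety, and the point where I expect sign errors to creep in, is that the minus sign in the twisted self-duality relation $\ast \cV = -\cJ(\phi)\cV$ forces the $(+i)$-eigenform part $\cV^{+}$ of $\ast$ to lie in the $(-i)$-eigenspace of $\cJ$, which is exactly what produces $\cN^{\ast}$ rather than $\cN$ in the second block. Keeping this pairing of eigenvalues straight, together with a careful tracking of the convention $\cN = \cR + i\cI$, is the only real bookkeeping hazard in an otherwise routine computation.
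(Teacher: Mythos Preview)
Your proposal is correct and follows exactly the same strategy as the paper's own proof: invoke Lemma \ref{lemma:twistedselfdual} to reduce twisted self-duality to the explicit form $\cV = (F,\, \cR F - \cI \ast F)^t$, and then compute $\cV^{+} = \tfrac{1}{2}(\cV - i\ast\cV)$ block by block. The paper's version is simply terser, declaring the equivalence ``easily shown'' without writing out the line $G^{+} = (\cR - i\cI)F^{+} = \cN^{\ast}F^{+}$ or the reconstruction of the real $\cV$ from $\cV^{+}$ in the converse; you have filled in precisely those details, and your additional eigenspace remark is accurate and helpful.
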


\begin{proof}
By Lemma \ref{lemma:twistedselfdual}, $\cV$ is twisted self-dual with respect to $\cJ$ if and only if:
\begin{equation*}
\cV
=   
\begin{pmatrix} 
F  \\
\cR F - \cI  \ast F   
\end{pmatrix} \, ,
\end{equation*}

\noindent
for $F\in \Omega^2(U,\mathbb{R}^{n_v})$. This equation can be easily shown to be equivalent to:
\begin{equation}
\cV^{+} =
\begin{pmatrix} 
F^{+} \\
\cN^{\ast}(\phi) F^{+}
\end{pmatrix} 
\end{equation}

\noindent
by computing $\cV^{+} = \frac{1}{2} (\cV - i \ast \cV)$. 
\end{proof}

\noindent
With these provisos in mind, we obtain:  
\begin{equation*}
G^{+} = \cN^{\ast}(\phi) F^{+}\, , \qquad G^{-} = \cN(\phi) F^{-}\, , \qquad \cV^{+} = (F^{+} , \cN^{\ast}F^{+})^t\, ,
\end{equation*}

\noindent
where the superscript $\ast$ denotes complex conjugation. These conditions are equivalent with:
\begin{equation*}
\cV = \frac{1}{2} (\cV^{+} + \cV^{-})\, ,
\end{equation*}

\noindent
being twisted self-dual with respect to the corresponding $\cJ$. 

\begin{remark} Since a period matrix $\cN$ is equivalent to the data
$(\cR,\cI)$, which in turn is equivalent to its
associated taming map $\cJ$, we will sometimes denote the
electromagnetic structure $(\cR,\cI)$ simply by $\cN$.
\end{remark}

\noindent Due to the fact that the period matrix $\cN$ takes values in
the Siegel upper space, the real symplectic group
$\mathrm{Sp}(2n_v,\mathbb{R})$ acts on $\cN$ through the natural left
action of $\mathrm{Sp}(2n_v,\mathbb{R})$ on $\mathbb{SH}(n_v)$ via
\emph{fractional transformations}. Recall that the fractional
transformation of $\tau\in\mathbb{SH}(n_v)$ by a matrix $\mathfrak{A}\in
\Sp(2n_v,\mathbb{R})$ is, by definition, given by:
\begin{equation*}
\mathfrak{A}\cdot \tau = \frac{ c + d \tau}{a + b \tau} \eqdef (c + d\tau) (a+b\tau)^{-1}\, , \qquad \tau \in\mathbb{SH}(n_v)\, ,
\end{equation*}

\noindent
where we wrote $A$ in $n_v\times n_v$ blocks as follows:
\begin{equation*}
\mathfrak{A} = \begin{pmatrix} 
a & b \\
c & d 
\end{pmatrix} \, .
\end{equation*}

\noindent This is the natural generalization of the action of
$\mathrm{Sl}(2,\mathbb{R})$ on the upper-half plane. Hence, a
symplectic matrix $\mathfrak{A}\in \mathrm{Sp}(2n,\mathbb{R})$ acts
point-wise on the period matrix $\cN(\phi) \colon U \to
\mathbb{SH}(n_v)$:
\begin{equation*}
\mathfrak{A} \cdot \cN(\phi) = \frac{c +  d \cN(\phi)}{a + b \cN(\phi)}\, .
\end{equation*}

\begin{remark}
More generally, $\Diff(V)\times \Sp(2n_v,\mathbb{R})$ has a natural left action on the set of period matrix maps as follows:
\begin{equation*}
\cN\mapsto \mathfrak{A}\cdot \cN\circ f^{-1}\, ,
\end{equation*}

\noindent for every $(f,\mathfrak{A})\in \Diff(V)\times
\Sp(2n_v,\mathbb{R})$ and every period matrix map $\cN\colon V\to
\mathbb{SH}(n_v)$.
\end{remark}

\noindent Since a period matrix map $\cN\colon V\to \mathbb{SH})(n_v)$
is equivalent to a taming map $\cJ\colon V\to \Aut(\mathbb{R}^{2 n_v})$ we can
describe the electromagnetic U-duality group defined in
\ref{def:Udualitygroup} in terms of a period matrix map. For this, we
recall first that, as an immediate consequence of Proposition
\ref{prop:1to1electromagnetic}, there exists a bijection:
\begin{equation*}
\mu \colon  \mathfrak{J}_V(\mathbb{R}^{2n_v},\omega)\to C^{\infty}(V,\mathbb{SH}(n_v)) \, ,
\end{equation*}

\noindent which, to every compatible taming $\cJ \in
\mathfrak{J}_V(\mathbb{R}^{2n_v},\omega)$ assigns the following period
matrix:
\begin{equation*}
\mu(\cJ) = \cR + i\cI \colon V\to \mathbb{SH}(n_v)\, ,
\end{equation*}

\noindent where $\gamma^{-1}(\cR,\cI) = \cJ$ is the electromagnetic
structure associated to $\cJ$ by means of the bijection $\gamma$. The
inverse of $\mu$ maps every period matrix $\tau= \mathrm{Re}(\tau) +
i\mathrm{Im}(\tau) \colon V\to \mathbb{SH}(n_v)$ to the taming defined
explicitly by equation \eqref{eq:Jlocaltaming} by identifying $\cR=
\mathrm{Re}(\tau)$ and $\cI= \mathrm{Im}(\tau)$.  

\begin{lemma}
\label{lemma:equivarianceelectromagnetic} The map $\mu \colon
\mathfrak{J}_V(\mathbb{R}^{2n_v},\omega)\to
C^{\infty}(V,\mathbb{SH}(n_v))$ is equivariant with respect to the
natural action of $\Diff(V) \times \Sp(2n_v,\mathbb{R}$ on
$\mathfrak{J}_V(\mathbb{R}^{2n_v},\omega)$ and
$C^{\infty}(V,\mathbb{SH}(n_v)$, respectively. That is, the following
relation holds:
\begin{equation*}
\mu(\cJ^f_{\mathfrak{A}}) = \mathfrak{A}\cdot \mu(\cJ)\circ f^{-1}\, ,
\end{equation*}

\noindent
for every $(f,\mathfrak{A})\in \Diff(V)\times \Sp(2n_v,\mathbb{R})$.
\end{lemma}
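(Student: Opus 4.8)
The plan is to prove the equivariance pointwise on $V$ and to replace the brute-force manipulation of the four-block matrices defining $\cJ^f_{\mathfrak{A}}$ by a coordinate-free description of $\mu$ in terms of eigenspaces of $\cJ$ in the complexification $\mathbb{C}^{2n_v}$. Since $(\cJ^f_{\mathfrak{A}})_p = \mathfrak{A}\,\cJ_{f^{-1}(p)}\,\mathfrak{A}^{-1}$ for every $p\in V$, while the right-hand side $\mathfrak{A}\cdot\mu(\cJ)\circ f^{-1}$ is evaluated at $p$ as $\mathfrak{A}\cdot\mu(\cJ)_{f^{-1}(p)}$, the composition with $f^{-1}$ factors out trivially. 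Hence it suffices to establish, for a fixed compatible taming $\cJ$ of $(\mathbb{R}^{2n_v},\omega)$ and a fixed $\mathfrak{A}\in\Sp(2n_v,\mathbb{R})$, the purely linear-algebraic identity $\mu(\mathfrak{A}\,\cJ\,\mathfrak{A}^{-1}) = \mathfrak{A}\cdot\mu(\cJ)$.

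First I would give the eigenspace characterization of $\mu$. Extending $\cJ$ complex-linearly to $\mathbb{C}^{2n_v}$, let $L\eqdef\ker(\cJ - i\,\mathrm{Id})\subset\mathbb{C}^{2n_v}$ be its $(+i)$-eigenspace. Writing a vector as $(x,y)^t$ in the splitting induced by the canonical symplectic basis $\cE = (e_1,\dots,e_{n_v},f_1,\dots,f_{n_v})$ and using the explicit block form \eqref{eq:Jlocaltaming} of $\cJ$ in terms of $(\cR,\cI) = \gamma^{-1}(\cJ)$, the eigenvalue equation $\cJ(x,y)^t = i(x,y)^t$ reduces, via its first block row $-\cI^{-1}\cR x + \cI^{-1}y = ix$, to $y = (\cR + i\cI)x$. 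Therefore $L = \{(x,\tau x)^t : x\in\mathbb{C}^{n_v}\}$ is the graph of $\tau\eqdef\mu(\cJ) = \cR + i\cI$, and conversely $\tau$ is uniquely recovered from $L$ as the matrix whose graph is $L$. This is the intrinsic description of $\mu$ I will use.

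Next, conjugation acts transparently on eigenspaces: if $\cJ v = iv$ then $(\mathfrak{A}\cJ\mathfrak{A}^{-1})(\mathfrak{A}v) = i(\mathfrak{A}v)$, so the $(+i)$-eigenspace of $\cJ'\eqdef\mathfrak{A}\cJ\mathfrak{A}^{-1}$ is $L' = \mathfrak{A}L$. Writing $\mathfrak{A} = \begin{pmatrix} a & b \\ c & d\end{pmatrix}$ in $n_v\times n_v$ blocks and applying it to the generator $(x,\tau x)^t$ of $L$ gives $\mathfrak{A}(x,\tau x)^t = ((a+b\tau)x,\,(c+d\tau)x)^t$. Since $\cJ'$ is again a compatible taming (the $\Sp$-action preserves $\mathfrak{J}_V(\mathbb{R}^{2n_v},\omega)$), its image under $\mu$ lies in $\mathbb{SH}(n_v)$; in particular $a+b\tau$ is invertible, and setting $x'\eqdef(a+b\tau)x$ rewrites $L'$ as the graph $\{(x',\tau'x')^t\}$ with $\tau' = (c+d\tau)(a+b\tau)^{-1} = \mathfrak{A}\cdot\tau$. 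By the eigenspace characterization this says exactly $\mu(\mathfrak{A}\cJ\mathfrak{A}^{-1}) = \mathfrak{A}\cdot\mu(\cJ)$, and reinstating the base-point dependence through $f^{-1}$ yields $\mu(\cJ^f_{\mathfrak{A}}) = \mathfrak{A}\cdot\mu(\cJ)\circ f^{-1}$.

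I expect the only real friction to be the bookkeeping of conventions: matching the sign convention $\mu(\cJ) = \cR + i\cI$ against the period matrix $\cN = -\cR + i\cI$ used elsewhere, and confirming that with the chosen block form of $\omega$ the fractional transformation preserving $\mathbb{SH}(n_v)$ is precisely $(c+d\tau)(a+b\tau)^{-1}$ rather than one of its variants. The invertibility of $a+b\tau$ need not be checked by hand, since it follows from the already-established fact that $\mathfrak{A}\,(\cJ\circ f^{-1})\,\mathfrak{A}^{-1}$ is again a taming map on which $\mu$ is defined. A direct alternative --- substituting the blocks of $\cJ^f_{\mathfrak{A}}$ into \eqref{eq:Jlocaltaming} and reading off $\mathrm{Re}$ and $\mathrm{Im}$ --- is available but far more laborious, and the eigenspace argument is what I would write up.
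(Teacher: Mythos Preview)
The paper states this lemma without proof, so there is no argument to compare against. Your proof is correct and is in fact the standard conceptual approach: identifying $\mu(\cJ)$ with the graph description of the $(+i)$-eigenspace $L\subset\mathbb{C}^{2n_v}$ of $\cJ$, and then observing that conjugation by $\mathfrak{A}$ sends $L$ to $\mathfrak{A}L$, which---once re-expressed as a graph---yields precisely the fractional-linear action $\tau\mapsto (c+d\tau)(a+b\tau)^{-1}$ as defined in the paper. The reduction to a pointwise linear-algebraic statement via $(\cJ^f_{\mathfrak{A}})_p=\mathfrak{A}\,\cJ_{f^{-1}(p)}\,\mathfrak{A}^{-1}$ is also clean and correct.

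Your handling of the invertibility of $a+b\tau$ is fine: since $\mathfrak{A}\cJ\mathfrak{A}^{-1}$ is again a compatible taming (the $\Sp(2n_v,\mathbb{R})$-action preserves $\mathfrak{J}_V$), Proposition~\ref{prop:cNTaming} guarantees its $(+i)$-eigenspace is the graph of some $\tau'\in\mathbb{SH}(n_v)$, which forces $a+b\tau$ to be invertible. Your caveat about the sign convention $\mu(\cJ)=\cR+i\cI$ versus $\cN=-\cR+i\cI$ is well observed---the paper is not entirely consistent on this point---but your computation using the block form \eqref{eq:Jlocaltaming} gives $y=(\cR+i\cI)x$, which matches the paper's explicit definition of $\mu$, so the argument goes through as written.
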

 
\begin{prop} Let $\cJ$ be a taming map and set $\cN\eqdef
\mu(\cJ)$. The electromagnetic U-duality group $\U(\cG,\cJ)$ of the
local bosonic supergravity associated to $(\cG,\cJ)$ is canonically
isomorphic to:
\begin{equation}
\label{eq:UdualitycN}
\U(\cG,\cN) \eqdef \left\{ (f,\mathfrak{A})\in  \Iso(V,\cG)\times \Sp(2n_v,\mathbb{R})\,\, \vert \,\, \mathfrak{A}\cdot \cN = \cN\circ f \right\}\, , \quad \cN = \mu(\cJ)\, ,
\end{equation}

\noindent
through the identity map $\U(\cG,\cN) \ni (f,\mathfrak{A})\mapsto (f,\mathfrak{A})\in \U(\cG,\cJ)$.
\end{prop}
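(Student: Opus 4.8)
The plan is to show that the two subgroups $\U(\cG,\cJ)$ and $\U(\cG,\cN)$ of $\Iso(V,\cG)\times\Sp(2n_v,\R)$ literally coincide as sets of pairs $(f,\mathfrak{A})$. Since both carry the group structure induced by componentwise composition in $\Diff(V)\times\Sp(2n_v,\R)$, the identity map on pairs is then automatically a group isomorphism, and no separate check of multiplicativity is needed. Thus the entire content reduces to proving the equivalence of the two defining conditions, namely $\mathfrak{A}\,\cJ\,\mathfrak{A}^{-1} = \cJ\circ f$ from \eqref{eq:localUduality} on the one hand and $\mathfrak{A}\cdot\cN = \cN\circ f$ from \eqref{eq:UdualitycN} on the other, for a fixed $f\in\Iso(V,\cG)$.

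First I would rewrite the condition appearing in \eqref{eq:localUduality} in terms of the action $\cJ\mapsto\cJ^f_{\mathfrak{A}}$ introduced just before Lemma \ref{lemma:equivariancecT}. Evaluating $\cJ^f_{\mathfrak{A}} = \mathfrak{A}\,(\cJ\circ f^{-1})\,\mathfrak{A}^{-1}$ at the point $f(p)$ shows that $\mathfrak{A}\,\cJ|_p\,\mathfrak{A}^{-1} = \cJ|_{f(p)}$ holds for all $p\in V$ if and only if $\cJ^f_{\mathfrak{A}} = \cJ$ as taming maps, using that $f$ is a bijection of $V$. Hence $(f,\mathfrak{A})\in\U(\cG,\cJ)$ precisely when $f\in\Iso(V,\cG)$ and $\cJ^f_{\mathfrak{A}} = \cJ$.

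Next I would transport this fixed-point condition through the bijection $\mu$ supplied by Proposition \ref{prop:1to1electromagnetic}. Since $\mu\colon\mathfrak{J}_V(\R^{2n_v},\omega)\to C^{\infty}(V,\mathbb{SH}(n_v))$ is injective, the equality $\cJ^f_{\mathfrak{A}} = \cJ$ is equivalent to $\mu(\cJ^f_{\mathfrak{A}}) = \mu(\cJ) = \cN$. Applying the equivariance relation of Lemma \ref{lemma:equivarianceelectromagnetic}, $\mu(\cJ^f_{\mathfrak{A}}) = \mathfrak{A}\cdot\mu(\cJ)\circ f^{-1} = \mathfrak{A}\cdot\cN\circ f^{-1}$, the condition becomes $\mathfrak{A}\cdot\cN\circ f^{-1} = \cN$, which after composing on the right with $f$ reads exactly $\mathfrak{A}\cdot\cN = \cN\circ f$. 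This is the defining relation of $\U(\cG,\cN)$, so the two conditions are equivalent and the two subgroups coincide, whence the identity map is the desired canonical isomorphism.

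I do not expect a genuine obstacle: the argument is a formal chase combining the injectivity of $\mu$ with the single equivariance identity of Lemma \ref{lemma:equivarianceelectromagnetic}. The only points demanding care are purely bookkeeping, namely the correct placement of $f$ versus $f^{-1}$ when passing between the pointwise statement $\mathfrak{A}\,\cJ|_p\,\mathfrak{A}^{-1} = \cJ|_{f(p)}$ and the map-level statement $\cJ^f_{\mathfrak{A}} = \cJ$, and keeping the isometry constraint $f\in\Iso(V,\cG)$ present throughout, since it is imposed identically in both presentations and plays no role in the equivalence of the electromagnetic conditions.
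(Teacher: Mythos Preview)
Your proposal is correct and follows essentially the same approach as the paper: both arguments reduce the claim to the observation that $\cJ^f_{\mathfrak{A}}=\cJ$ is equivalent to $\mu(\cJ^f_{\mathfrak{A}})=\mu(\cJ)$ by injectivity of $\mu$, and then invoke the equivariance relation of Lemma~\ref{lemma:equivarianceelectromagnetic} to rewrite this as $\mathfrak{A}\cdot\cN=\cN\circ f$. Your version is slightly more explicit about the $f$ versus $f^{-1}$ bookkeeping when passing between the defining condition in \eqref{eq:localUduality} and the fixed-point form $\cJ^f_{\mathfrak{A}}=\cJ$, but the substance is identical.
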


\begin{proof} It is enough to note that $(f,\mathfrak{A})\in
\Diff(V)\times \Sp(2n_v,\mathbb{R})$ satisfies $\cJ^f_{\mathfrak{A}} =
\cJ$ if and only if $\mu(\cJ^f_{\mathfrak{A}}) = \mu(\cJ)$, which in
turn is equivalent to:
\begin{equation*}
\mathfrak{A}\cdot\mu(\cJ) = \mu(\cJ)\circ f \, ,
\end{equation*}

\noindent
by Lemma \ref{lemma:equivarianceelectromagnetic}. 
\end{proof}

\begin{remark} Equation \eqref{eq:UdualitycN}, which defines the duality
group in terms of the period matrix $\cN$, can be alternatively
obtained as follows, which is the way in which this group is usually
described in the literature. Consider:
\begin{equation*}
(\hat{g}, \hat{\phi},\hat{\cV}) = (g, f\circ \phi, \mathfrak{A} \cV)\, ,
\end{equation*} 

\noindent
for $(f,\mathfrak{A})\in \Diff(V,\cG)\times \Sp(2n_v,\mathbb{R})$ and $(g,\phi,\cV)\in \Sol_U(\cG,\cJ)$. Write:
\begin{equation*}
\mathfrak{A} =  
\begin{pmatrix} 
a & b \\
c & d 
\end{pmatrix} \, .
\end{equation*}

\noindent
By Proposition \ref{prop:cVcN}, we have $\cV^{+} = (F^{+} ,
\cN^{\ast}(\phi) F^{+})$ but in general for an arbitrary
$(f,\mathfrak{A})$ there will exist no period matrix $\hat{\cN}\colon
V\to \mathbb{SH}(n_v)$ such that $\hat{\cV}^{+} = (\hat{F}^{+} ,
\hat{\cN}^{\ast}(\hat{\phi}) \hat{F}^{+})$. Imposing that such period
matrix $\hat{\cN}$ exists we obtain:
\begin{equation*}
\hat{\cN} = \mathfrak{A}\cdot\cN\circ f^{-1}\, ,
\end{equation*}

\noindent
and the condition appearing in \eqref{eq:UdualitycN} follows now by imposing $\hat{\cN} = \cN$. 
\end{remark}

\noindent Using the previous canonical identification between
$\U(\cG,\cN)$ and $\U(\cG,\cJ)$ we obtain short exact sequences
for $\U(\cG,\cN)$ analogous to \eqref{eq:shortUduality1} and
\eqref{eq:shortUduality2}, namely:
\begin{eqnarray}
\label{eq:shortUdualitycN}
& 1 \to \Stab_\Sp(\cN) \to \U(\cG,\cN) \to \Iso_{pr}(V,\cG)\to 1\, , \nonumber\\ 
& 1 \to \Stab_\Iso(\cN) \to \U(\cG,\cN) \to \Sp_{pr}(2n_v,\mathbb{R}) \to 1\, ,
\end{eqnarray}
\noindent
where:
\begin{equation*} 
\Stab_\Sp(\cN) \eqdef \left\{ \mathfrak{A} \in
\Sp(2n_v , \mathbb{R}) \,\, \vert \,\, \mathfrak{A}\cdot \cN  = \cN\right\}\, ,\quad \Stab_\Iso(\cN) \eqdef
\left\{ f \in \Iso(V,\cG) \,\, \vert \,\, \cN\circ f = \cN \right\}
\end{equation*}

\noindent We consider now several examples of importance in
supergravity, which are more conveniently studied using the period
matrix map rather than the taming map.

\begin{example} Assume that $\cN\in \mathbb{SH}(n_v)$ is a constant
matrix. Then:
\begin{equation*} \cN\circ f = \mathfrak{A}\cdot\cN\, ,
\end{equation*}

\noindent for every $f\in \Iso(V,\cG)$ and $\mathfrak{A} \in
\Stab_{\Sp}(\cN)\subset \Sp(2n_v,\mathbb{R})$. Since the action of
$\Sp(2n_v,\mathbb{R})$ on $\mathbb{SH}(n_v)$ is transitive with
stabilizer isomorphic to the unitary group $\U(n_v) \subset \Sp(2n_v ,
\mathbb{R})$ and $\cN$ is assumed to be constant, the conjugacy class
of the stabilizer of $\cN$ in $\Sp(2n_v,\mathbb{R})$ is independent of
$\cN$. Therefore:
\begin{equation*} \U(\cG,\cN) \simeq \Iso(V,\cG)\times \U(n_v)\, ,
\end{equation*}

\noindent
is by Proposition \ref{prop:localUduality} the corresponding U-duality group.
\end{example}

\begin{example}
Set $n_v = 1$. We have $\mathbb{SH}(1) = \mathbb{H}$. Furthermore, assume $V = \mathbb{H}$ is equipped with its Poincar\'e metric $\cG$. Take $\cN\colon \mathbb{H} \to \mathbb{H}$ to be the identity map, that is, $\cN(\tau) = \tau$ where $\tau$ is the global coordinate on $\mathbb{H}$. Notice that this particular period matrix occurs in pure $N=4$ four-dimensional supergravity \cite{Bellorin:2005zc}. We have $\Iso(\mathbb{H},\cG) = \mathrm{PSl}(2,\mathbb{R})$ acting on $\mathbb{H}$ through fractional transformations. Hence:
\begin{equation*}
\cN\circ f(\tau) = f\cdot\tau = f\cdot \cN(\tau) = \hat{f}\cdot \cN(\tau)\, , \qquad \forall \,\, f\in\Iso(\mathbb{H},\cG)\, ,
\end{equation*}
	
\noindent
where $\hat{f}\in \mathrm{Sl}(2,\mathbb{R})$ denotes any lift of $f\in \mathrm{PSl}(2,\mathbb{R})$ to $\mathrm{Sl}(2,\mathbb{R})$. This implies that $\Iso_{pr}(\cM,\cG) = \Iso(\cM,\cG)$. On the other hand, a direct computation shows that:
\begin{equation*}
\Stab_{\Sp}(\cN) = \mathbb{Z}_2 = \left\{ \mathrm{Id}, - \mathrm{Id}\right\} \subset \mathrm{Sl}(2,\mathbb{R})\, .
\end{equation*}
	
\noindent
Therefore, by Proposition \ref{prop:localUduality} we have the following short exact sequence:
\begin{equation}
\label{eq:Z2extension}
1 \to \mathbb{Z}_2 \to \U(\cG,\cN) \to \mathrm{PSl}(2,\mathbb{R}) \to 1\, ,
\end{equation}
	
\noindent
which yields a central extension of $\U(\cG,\cN)$. Using now the fact that $\Stab_\Iso(\cN) = \mathrm{Id}$ we conclude that the electromagnetic U-duality group is $\U(\cG,\cN) = \mathrm{Sl}(2,\mathbb{R})$ and \eqref{eq:Z2extension} is indeed a non-trivial central extension of the isometry group of the scalar manifold. 
\end{example}

\begin{example}
Take $n_v =2$ and set $V = \mathbb{H}$ equipped with its Poincar\'e metric $\cG$. Consider the period matrix $\cN\colon \mathbb{H} \to \mathbb{SH}(2)$ defined as follows:
\begin{equation*}
\cN(\tau) = \begin{pmatrix} 
\tau & 0 \\
0 & - \frac{1}{\tau} 
\end{pmatrix} \, ,
\end{equation*}
	
\noindent
where $\tau$ is the global coordinate on $\mathbb{H}$. Clearly $\cN$ is symmetric. Furthermore, its imaginary part is positive definite:	
\begin{equation*}
\Im \cN(\tau) = \begin{pmatrix} 
\Im(\tau) & 0 \\
0 & \frac{\Im(\tau)}{\vert\tau\vert^2} 
\end{pmatrix} \, ,
\end{equation*}
	
\noindent whence $\cN$ is a well-defined period matrix. In fact,
$\cN$ is the period matrix occurring in the \emph{axio-dilaton model}
of $\cN=2$ supergravity, see for example \cite[Section
2]{Galli:2011fq} for more details. It is unambiguously fixed by
supersymmetry and in particular by the projective special K\"ahler
structure of the scalar manifold of the theory. As in the previous
example, we have:
\begin{equation*}
\Iso(V,\cG) = \mathrm{PSl}(2,\mathbb{R})\, , 
\end{equation*}
	
\noindent
acting through fractional transformations. Let $\mathfrak{A}\in\mathrm{Sp}(4n_v,\mathbb{R})$. A quick computation shows that:
\begin{equation*}
\mathfrak{U}\cdot \cN  = \cN\, , 
\end{equation*}	
	
\noindent
if and only if:
\begin{equation*}
\mathfrak{U} = (u,u) \in \SO(2) \times \SO(2) \hookrightarrow \Sp(4,\mathbb{R})\, ,
\end{equation*}
	
\noindent where the embedding is diagonal. Hence: $\Stab_\Sp(\cN) =
\U(1)$ diagonally embedded in $\Sp(4,\mathbb{R})$. On other hand, it
can be seen that $\Iso_{pr}(\mathbb{H},\cG) =
\mathrm{PSl}(2,\mathbb{R})$ and hence, the electromagnetic U-duality
group fits into the following short exact sequence:
\begin{equation*}
1 \to \U(1) \to \U(\cG,\cN) \to \mathrm{PSl}(2,\mathbb{R}) \to 1\, ,
\end{equation*} 
	
\noindent by Proposition \ref{prop:localUduality}. Moreover, it can be
easily verified that $\Stab_\Iso(\cN) = \mathrm{Id}$. Hence, the
U-duality group is canonically embedded as $\U(\cG,\cN) =
\Sp_{pr}(4,\mathbb{R})\hookrightarrow \Sp(4 , \mathbb{R})$, which
provides an explicit realization of the electromagnetic U-duality
group in $\Sp(4,\mathbb{R})$.
\end{example}

\begin{example}
Take $n_v =2$ and set $V = \mathbb{H}$ equipped with its Poincar\'e metric $\cG$. Consider the period matrix $\cN\colon \mathbb{H} \to \mathbb{SH}(2)$ defined as follows:
\begin{equation*}
\cN(\tau) = \begin{pmatrix} 
\frac{\tau^2}{2} (\tau + 3 \tau^{\ast})& -\frac{3}{2} \tau (\tau + \tau^{\ast}) \\
-\frac{3}{2} \tau (\tau + \tau^{\ast}) &  3 (\tau + \tau^{\ast}) + \frac{3}{2} (\tau - \tau^{\ast}) 
\end{pmatrix} \, ,
\end{equation*}
	
\noindent
where $\tau$ is the global coordinate on $\mathbb{H}$. Clearly $\cN$ is symmetric. Its imaginary part can be computed to be:	
\begin{equation*}
 \Im \cN(\tau) = \begin{pmatrix} 
\Im(\tau)^3 + 3 \Re(\tau)^2 \Im(\tau)  & -3 \Re(\tau) \Im(\tau) \\
- 3 \Re(\tau) \Im(\tau) & 3 \Im(\tau)
\end{pmatrix} \, ,
\end{equation*}
	
\noindent
It is easy to see that $ \Tr(\Im \cN(\tau)) > 0$ and $ \det(\Im \cN(\tau)) > 0$ whence $\Im(\cN)$ is positive definite and $\cN$ is well-defined as a period matrix. In fact, $\cN$ is the period matrix occurring in the \emph{$t^3$ model} of $\cN=2$ supergravity, see \cite[Section 7]{Ortin} for more details, which is a particularly important supergravity model in the context of Type-II compactifications on Calabi-Yau three-folds. It is unambiguously fixed by supersymmetry and in particular by the projective special K\"ahler structure of the scalar manifold of the theory. As in the previous example, we have:
\begin{equation*}
\Iso(V,\cG) = \mathrm{PSl}(2,\mathbb{R})\, , 
\end{equation*}
	
\noindent
acting through fractional transformations on $\tau$. We have $\cN\circ f = \cN$ for $f\in \Iso(V,\cG)$ if and only if $f$ is the identity, whence $\Stab_\Iso(\cN) = \mathrm{Id}$ and $\U(\cG,\cN)$ embeds in $\Sp(4,\mathbb{R})$. Let $\mathfrak{A} \in\mathrm{Sp}(4n_v,\mathbb{R})$. A tedious computation shows now that:
\begin{equation*}
\mathfrak{U}\cdot \cN  = \cN\, , 
\end{equation*}	
	
\noindent
if and only if $\mathfrak{A} = \mathrm{Id}$ as well as $\Iso_{pr}(V,\cG) = \Iso(V,\cG)$. Therefore, the U-duality group is isomorphic to $\mathrm{PSl}(2,\mathbb{R})$ and is canonically embedded in $\Sp(4,\mathbb{R})$. 
\end{example}

\begin{remark}
Definition \ref{def:Udualitygroup} and Proposition \ref{prop:localUduality} should be compared with the characterization of U-duality groups already existing in the supergravity literature, see for instance \cite{Aschieri:2008ns,Fre:1995dw}. The general approach considers a \emph{fixed embedding} of the isometry group in the symplectic group of the appropriate dimension in such a way that for each isometry of $(V,\cG)$ there exists a unique symplectic transformation satisfying Equation \eqref{eq:localequivariancecJ} and no isometry leaves $\cJ$ (or the period matrix) invariant. This immediately implies by assumption that $\Stab_{\Sp}(\cN) = \Stab_\Iso = \mathrm{Id}$ and hence such U-duality group is simply a copy of the isometry group of $(V,\cG)$ inside $\Sp(2n_v,\mathbb{R})$. This is in general not the case for the U-duality group introduced in Definition \ref{def:Udualitygroup}, which therefore differs from the one considered in the literature. This difference becomes more dramatic when considering the global formulation of the theory, see Section \ref{sec:globalautgroup}. 
\end{remark}


\section{Geometric bosonic supergravity}
\label{sec:geometricsugra}


In this section we describe the global geometric formulation of the
generic bosonic sector of supergravity on an oriented four-manifold
$M$, to which we will refer simply as \emph{geometric bosonic
supergravity}, or geometric supergravity for short, following the
terminology introduced in \cite{Cortes:2018lan}. The key points we
have considered when constructing geometric bosonic supergravity are
the following:

\

\begin{itemize}
	\item We have required geometric bosonic supergravity to be
defined in terms of global differential operators acting on the spaces
of sections of the appropriate fiber bundles. This is specially
important to study the global structure of supergravity solutions and the
associated moduli spaces.
	
	\
	
	\item We have required geometric supergravity to implement the
electromagnetic U-duality groups described in Section
\ref{sec:globalsymlocal}, in the sense that it must be possible to
understand the theory as being the result of \emph{gluing} the local
theories introduced in Section \ref{sec:localsugra} by means of a
\u{C}ech one cocycle valued in the symplectic group $\Sp(2n,\mathbb{R})$. 
This point is specially important for geometric supergravity to describe 
\emph{supergravity U-folds} in a differential-geometric context, as explained in
\cite{Lazaroiu:2016iav}, which is particularly relevant for string theory
applications.
\end{itemize}

\

\noindent Since we are mainly interested in the mathematical structure
of the gauge sector of the theory (which is responsible for the
existence of a \emph{symplectic duality structure}), we
will assume for simplicity that the theory is coupled to a standard
non-linear sigma model, instead of the more general notion of
\emph{section} sigma model considered in \cite{Lazaroiu:2017qyr}.

Instead of going through the process of constructing geometric bosonic
supergravity we present it in its final form and we discuss
its most interesting features. A geometric bosonic supergravity with metrically
trivial section sigma model is determined by the following data \cite{Lazaroiu:2016iav}:

\

\begin{itemize}
	\item An oriented and complete Riemannian manifold
$(\cM,\cG)$, the so-called \emph{scalar manifold} of the theory.
	
	\
	
	\item A triple $\Delta \eqdef (\cS,\omega,\cD)$ consisting of
a vector bundle $\cS$ over $\cM$ endowed with the symplectic pairing $\omega$ and
the flat symplectic connection $\cD$. We denote the complexification
of $\Delta = (\cS,\omega,\cD)$ by $\Delta_{\mathbb{C}} =
(\cS_{\C},\omega_{\C},\cD_{\C})$.
	
	\
	
	\item A compatible taming $\cJ$ on $(\cS,\omega,\cD)$, that
is, a complex structure on $\cS$ satisfying:
	\begin{equation*} \omega(\cJ s_1,\cJ s_2) = \omega(s_1 ,s_2)\,
, \qquad Q(s,s) \eqdef \omega(s , \cJ s) \geq 0\, ,
	\end{equation*}
	
	\noindent for all $s_1, s_2, s \in \cS$, with $Q(s,s) = 0$ if
and only if $s=0$. We will denote by $\Theta \eqdef (\Delta,\cJ)$
a pair consisting of a flat symplectic vector bundle $\Delta$ equipped
with a compatible taming $\cJ$.
\end{itemize}

\

\noindent Following the terminology of \cite{Lazaroiu:2016iav}, and given
a scalar manifold $(\cM,\cG)$, we will refer to $\Delta$ as a
\emph{duality structure} and to $\Theta = (\Delta,\cJ)$
as an \emph{electromagnetic structure}. The notion of morphism of
duality structures and electromagnetic structures is the natural one
given by a morphism of vector bundle preserving the relevant data
data, see \cite{Lazaroiu:2016iav} for more details. Finally, we will
refer to a scalar manifold $(\cM,\cG)$ together with a choice of
electromagnetic structure $\Theta$ as a scalar-electromagnetic
structure $\Phi$:
\begin{equation*} \Phi \eqdef (\cM,\cG,\Theta)\, .
\end{equation*}

\noindent A choice of duality structure $\Delta$ together with a
scalar manifold $(\cM,\cG)$ will be referred to as a
\emph{scalar-duality} structure, being denoted simply by
$(\cM,\cG,\Delta)$. Isomorphism classes of duality structures on a
fixed scalar manifold $\cM$ are in general not unique and depend on
the fundamental group of $\cM$. By the standard theory of flat vector
bundles, isomorphism classes of duality structures are in one to one
correspondence with the character variety:
\begin{equation*} \mathfrak{M}_d \eqdef \Hom(\pi_1(\cM),
\mathrm{Sp}(2n_v,\mathbb{R}))/\mathrm{Sp}(2n_v,\mathbb{R}) \, .
\end{equation*}

\begin{remark} The fact that character varieties yield in general
\emph{continuous} moduli spaces implies that one can construct an
uncountable infinity of inequivalent geometric bosonic supergravites,
all of which are however locally equivalent.
\end{remark}

\noindent A duality structure $\Delta$ can be \emph{trivial} in two
generally inequivalent senses. We say that $\Delta$ is
\emph{symplectically trivial} if $(\cS,\omega)\in \Delta$ is
symplectically trivial, that is, if it admits a global symplectic
frame. On the other hand, we will say that $\Delta$ is \emph{holonomy
trivial} if $\Delta$ is symplectically trivial and the holonomy of
$\cD$ is in addition trivial. Note that if $\cM$ is simply connected
every duality structure is symplectically trivial and holonomy
trivial.


\subsection{Geometric background}


Let $\Phi = (\cM,\cG,\Theta)$ be a scalar-electromagnetic
structure. Smooth maps from $M$ to $\cM$ will be called \emph{scalar
maps}. For every scalar map $\varphi\colon M\to \cM$ we use the
superscript $~^\varphi$ to denote bundle pull-back by $\varphi$. For
instance, $\Delta^{\varphi}$ will denote the pull-back of $\Delta$ by
$\varphi$, which defines a flat symplectic vector bundle over $M$, and
$\Theta^{\varphi}$ will denote the pull-back of $\Theta$ by $\varphi$,
respectively. For every Lorentzian metric $g$ on $M$ and scalar map
$\varphi$ we define an isomorphism of vector bundles:
\begin{equation*} \star_{g, \cJ^{\varphi}} \colon \Lambda T^{\ast}M
\otimes \cS^{\varphi} \to \Lambda T^{\ast}M \otimes \cS^{\varphi}\, ,
\end{equation*}

\noindent through the following equation:
\begin{equation*} \star_{g,\cJ^{\varphi}}(\alpha\otimes s) = \ast_g
\alpha\otimes \cJ^{\varphi}(s)\, , \qquad \alpha\in \Lambda
T^{\ast}M\, , \quad s\in \cS^{\varphi}
\end{equation*}

\noindent on homogeneous elements. Since the square of the Hodge
operator on two-forms is minus the identity, we obtain by restriction
an involutive isomorphism of vector bundles:
\begin{equation*} \star_{g,\cJ^{\varphi}} \colon \Lambda^2 T^{\ast}M
\otimes \cS^{\varphi} \to \Lambda^2 T^{\ast}M \otimes \cS^{\varphi}\,
,
\end{equation*}

\noindent that is, $\star_{g,\cJ^{\varphi}}^2 = 1$. Hence we can
split the bundle of two-forms taking values in $\cS^{\varphi}$ in
eigenbundles of $\star_{g,\cJ^{\varphi}}$:
\begin{equation*} \Lambda^2 T^{\ast}M \otimes \cS^{\varphi} =
(\Lambda^2 T^{\ast}M \otimes \cS^{\varphi})_{+} \oplus (\Lambda^2
T^{\ast}M \otimes \cS^{\varphi})_{-}\, ,
\end{equation*}

\noindent where the subscript denotes the corresponding
eigenvalue. The associated spaces of sections will be denoted
accordingly by:
\begin{equation*} \Omega^2(M,\cS^{\varphi}) =
\Omega^2_{+}(M,\cS^{\varphi}) \oplus \Omega^2_{-}(M,\cS^{\varphi})\, .
\end{equation*}

\begin{definition} Elements of $\Omega^2_{+}(M,\cS^{\varphi})$ will be
called {\bf twisted selfdual two-forms} and elements of
$\Omega^2_{-}(M,\cS^{\varphi})$ will be called {\bf twisted
anti-selfdual two-forms}.
\end{definition}

\noindent The flat symplectic connection $\cD^{\varphi}\in
\Delta^{\varphi}$ defines a canonical exterior covariant derivative
for forms on $M$ taking values in $\cS^{\varphi}$, which we denote by:
\begin{equation*} \dd_{\cD^{\varphi}}\colon \Omega^k(M,\cS^{\varphi})
\to \Omega^{k+1}(M,\cS^{\varphi})\, ,
\end{equation*}

\noindent where $k=0,\hdots 4$. Since $\cD^{\varphi}$ is flat, the
operator $\dd_{\cD^{\varphi}}$ is a coboundary operator on the complex
of forms taking values in $\cS^{\varphi}$. We denote the associated
cohomology groups by $H^{k}(M,\Delta^{\varphi})$ and the
corresponding total cohomology by $H(M,\Delta^{\varphi})$. Denote by:
\begin{equation*} \mathfrak{G}_{\Delta}(U) \eqdef \left\{ s \in
\Gamma(U,\cS^{\varphi}) \,\, \vert\,\, \cD^{\varphi} s = 0\right\}\, ,
\qquad U\subset M\, ,
\end{equation*}

\noindent the sheaf of smooth flat sections of $\Delta$. This is a
locally constant sheaf of symplectic vector spaces of rank $2n_{v}$,
whose stalk is isomorphic to the typical fiber of $\Delta$. There
exists a natural isomorphism of graded vector spaces:
\begin{equation*} H(M,\Delta^{\varphi}) \simeq
H(M,\mathfrak{G}_{\Delta})\, ,
\end{equation*}

\noindent where $H(M,\mathfrak{G}_{\Delta})$ denotes the sheaf
cohomology of $\mathfrak{G}_{\Delta}$.

Note that the definition of electromagnetic structure $\Theta =
(\Delta,\cJ)$ does not require $\cD\in \Delta$ to be compatible with
$\cJ$; the case when they are non-compatible is in fact crucial for the
correct description of geometric bosonic supergravity. The failure of
$\cD$ to be compatible with $\cJ$ is measured by the \emph{fundamental
form} of an electromagnetic structure.

\begin{definition} Let $\Phi =(\cM,\cG,\Theta)$ be a
  scalar-electromagnetic structure. The {
    \em fundamental form} $\Psi$
of $\Theta$ is the following one-form on $\cM$ taking
values in $\End(\cS)$:
\begin{equation*} \Psi \eqdef \cD\cJ \in
\Omega^1(\cM,\End(\cS))\, .
\end{equation*}
\end{definition}
 
\begin{remark}
It is not hard to see that $\Psi(X)\in \Gamma(\End(\cS))$, $X\in T\cM$, is an anti-linear self-adjoint endomorphism of the Hermitian vector bundle $(\cS,Q,\cJ)$. 
\end{remark}

\begin{definition}
An electromagnetic structure $\Theta$ is called {\em unitary} if $\Psi = 0$.
\end{definition}

\noindent To define geometric bosonic supergravity we need to
introduce three natural operations on tensors taking values in a
vector bundle.  These operations depend on the choice of
electromagnetic structure $\Theta$.

\begin{definition}
\label{def:tep} The {\em twisted exterior pairing}
$(\cdot,\cdot)_{g,Q^{\varphi}}$ is the unique pseudo-Euclidean scalar
product on $\Lambda T^{\ast}M\otimes\cS^{\varphi}$ satisfying:
\begin{equation*}
(\rho_1\otimes s_1,\rho_2\otimes s_2)_{g,Q^{\varphi}}=(\rho_1,\rho_2)_g Q^{\varphi}(s_1,s_2)\, ,
\end{equation*}
for any $\rho_1,\rho_2\in \Omega(M)$ and any $s_1,s_2\in \Gamma(\cS^{\varphi})$, where $(-, -)_g$ denotes the scalar product induced by $g$ on tensors over $M$. Recall that $Q(s_1,s_2) = \omega(s_1,\cJ s_2)$ and the superscript denotes pull-back by $\varphi$.
\end{definition}

\noindent For any vector bundle $W$ over $M$, we trivially extend the
twisted exterior pairing to a $W$-valued pairing (which for simplicity
we denote by the same symbol) between the bundles $W\otimes \Lambda
T^{\ast}M\otimes \cS^{\varphi}$ and $\Lambda T^{\ast}M\otimes
\cS^{\varphi}$:
\begin{equation*}
(w \otimes \eta_1,\eta_2)_{g,Q^{\varphi}} \eqdef  w \otimes (\eta_1,\eta_2)_{g,Q^{s}}\, , \quad\forall\, w\in \Gamma(W)\, , \quad \forall\, \eta_1,\eta_2\in \Lambda T^{\ast}M \otimes\cS^{\varphi}\, .
\end{equation*}

\begin{definition}
The {\em inner $g$-contraction of (2,0) tensors} is the bundle morphism $\oslash_g:(\otimes^2T^\ast M)^{\otimes 2}\rightarrow\otimes^2 T^\ast M$ uniquely determined by the condition:
\begin{equation*}
(\alpha_1\otimes\alpha_2)\oslash_g (\alpha_3\otimes \alpha_4)=(\alpha_2,\alpha_4)_g\alpha_1\otimes \alpha_3\, , \quad \forall\, \alpha_1, \alpha_2, \alpha_3, \alpha_4\in T^{\ast}M\, .
\end{equation*}

\noindent
We define the \emph{inner $g$-contraction of two-forms} to be the restriction of $\oslash_g$ to $\wedge^2 T^\ast M \otimes \wedge^2 T^\ast M\subset (\otimes^2T^\ast M)^{\otimes 2}$.
\end{definition}

\begin{definition}
We define the {\em twisted inner contraction} of $\cS^{\varphi}$-valued two-forms to be the unique morphism of vector bundles:
\begin{equation*}
\oslash_{Q}\colon\Lambda^2 T^{\ast}M\otimes\cS^{\varphi}\times_M \Lambda^2 T^{\ast}M\otimes\cS^{\varphi}\rightarrow\otimes^2(T^\ast M)
\end{equation*}

\noindent
satisfying:
\begin{equation*}
(\rho_1\otimes s_1)\oslash_Q (\rho_2\otimes s_2)= Q^{\varphi} (s_1,s_2) \rho_1 \oslash_g\rho_2\, ,
\end{equation*}
for all $\rho_1,\rho_2\in \Omega^2(M)$ and all $s_1,s_2\in \Gamma(\cS^{\varphi})$. 
\end{definition}


\subsection{Configuration space and equations of motion}


In this section, we define geometric bosonic supergravity through a
system of partial differential equations which yields a non-trivial
extension of local supergravity as described in Section
\ref{sec:localsugra}. We remark that geometric bosonic supergravity
is not expected to admit in general an action functional, which is consistent with the
fact that it implements U-duality non-trivially and therefore its
globally-defined solutions can be viewed as locally geometric supergravity
U-folds. We begin by introducing the configuration space of geometric
bosonic supergravity, which yields the space of variables of its
system of partial differential equations.

\begin{definition}
Let $\Phi$ be a scalar-electromagnetic structure on an oriented four-manifold $M$. The {\bf configuration space} of geometric bosonic supergravity on $(M,\Phi)$ is the set:
\begin{equation*}
\Conf_M(\Phi) \eqdef \left\{ (g,\varphi,\cV)  \,\, \vert \,\, g\in \mathrm{Lor}(M) \, , \,\, \varphi\in C^{\infty}(M,\cM)\, , \,\, \cV  \in \Omega^2_{+}(M,\cS^{\varphi}) \right\}\, ,
\end{equation*}
	
\noindent
where $\mathrm{Lor}(M)$ denotes the space of Lorentzian metrics on $M$.
\end{definition}

\begin{definition}
Let $\Phi$ be a scalar-electromagnetic structure on $M$. The geometric bosonic supergravity on $M$ associated to $\Phi$ is defined by the following system of partial differential equations:

\

\begin{itemize}
	\item The Einstein equations:
    \begin{equation}
    \label{eq:GlobalEinstein}
    \mathrm{Ric}^g - \frac{g}{2} R^g =\frac{g}{2}\, \mathrm{Tr}_g (\cG^{\varphi}) - \cG^{\varphi} + 2\cV\oslash_Q \cV\, .
    \end{equation}
	
	\
	
	\item The scalar equations:
	\begin{equation}
	\label{eq:GlobalScalar}
	\Tr_g(\nabla \dd\varphi) = \frac{1}{2} (\ast \cV , \Psi^{\varphi}\cV)_{g,Q^{\varphi}}\, .
	\end{equation}
	
	\noindent
	where $\nabla$ denotes the connection on $T^{\ast}M\otimes T\cM^{\varphi}$ defined as the tensor product of the Levi-Civita connection on $(M,g)$ and the pull-back by $\varphi$ of the Levi-Civita connection on $(\cM,\cG)$.
	
	\
	
	\item The Maxwell equations:
	\begin{equation}
	\label{eq:GlobalMaxwell}
	\dd_{\cD^{\varphi}} \cV = 0\, ,
	\end{equation}
\end{itemize}

\

\noindent
for triples $\Phi = (g,\varphi,\cV) \in \Conf_M(\Phi)$.
\end{definition}

\begin{remark} The configuration space $\Conf_M(\Phi) =
\Conf_M(\cG,\Delta,\cJ)$ of geometric bosonic supergravity contains as
variables the \emph{field strength} two-form instead of the
appropriate notion of gauge potential, which should be described
globally by an adequate notion of connection.  To identify the
geometrically correct notion of gauge potential we have to first
\emph{Dirac quantize} the theory, similarly to what is done with
standard Maxwell theory. In the latter theory, assuming that the field
strength has integral periods allows one to identify the gauge
potential as a connection on a certain principal $S^{1}$ bundle. The
complete Dirac quantization of four-dimensional supergravity and its
geometric interpretation has not been developed in the literature and
is currently work in progress \cite{LazaroiuShahbazi}.
\end{remark}

\begin{remark} The fact that geometric bosonic supergravity reduces
  locally to the standard formulation of local bosonic supergravity was
  proved in \cite{Lazaroiu:2016iav}, to which we refer the reader
for further details.
\end{remark}

\noindent In the following we will denote by $\mathrm{Sol}_M(\Phi) =
\Sol_M(\cG,\Delta,\cJ) \subset \Conf_M(\Phi)$ the solution set of the
geometric bosonic supergravity on $M$ associated to the
scalar-electromagnetic structure $\Phi$.


\section{The global duality group}
\label{sec:globalautgroup}


In this section we characterize the \emph{global} duality group of
geometric bosonic supergravity for a fixed scalar electromagnetic
structure $\Phi = (\cM,\cG,\Theta)$, which corresponds to the global
counterpart of the electromagnetic U-duality group of the local
theory, as discussed in Section \ref{sec:globalsymlocal}. Given a
duality structure $\Delta = (\cS,\omega,\cD)$, we denote by
$\Aut(\cS)$ the group of \emph{unbased} automorphisms of the vector
bundle $\cS\in \Delta$. Given $u\in \Aut(\cS)$ we will denote by $f_u
\colon \cM \to \cM$ the unique diffeomorphism covered by
$u$. Moreover, we denote by $\Aut(\Delta)$ the group of unbased
automorphisms of $\cS$ preserving both $\omega$ and $D$, that is:
\begin{equation*}
\Aut(\Delta) \eqdef \left\{ u\in \Aut(\cS)\,\,\vert\,\, \omega^u = \omega\, , \,\, \cD^u = \cD \right\}\, .
\end{equation*}

\noindent Given a duality structure $\Delta$ over $\cM$, the group
$\Aut(\Delta)$ has a natural left-action on $\mathrm{Lor}(M)\times
C^{\infty}(M,\cM)\times \Omega^2(\cS)$, given by:
\begin{eqnarray*}
& \mathbb{A}\colon \Aut(\Delta)\times  \mathrm{Lor}(M)\times C^{\infty}(M,\cM)\times \Omega^2(M,\cS)\to \mathrm{Lor}(M)\times C^{\infty}(M,\cM)\times \Omega^2(M,\cS)\, ,\\
& (u,g,\varphi,\cV) \mapsto (g, f_u\circ\varphi, u\cdot\cV)\, , 
\end{eqnarray*}

\noindent which gives the global counterpart of
\eqref{eq:Aaction}. For every $u\in \Aut(\Delta)$, we define:
\begin{equation*}
\mathbb{A}_{u} \colon  \mathrm{Lor}(M)\times C^{\infty}(M,\cM)\times \Omega^2(M,\cS)\to \mathrm{Lor}(M)\times C^{\infty}(M,\cM)\times \Omega^2(M,\cS)\, , \quad (g,\varphi,\cV) \mapsto (g, f_u\circ\varphi, u\cdot\cV)\, . 
\end{equation*}

\noindent This action does not preserve the configuration space
$\Conf_U(\cG,\cJ)$ of a given scalar-electromagnetic structure $\Phi =
(\cG,\cJ)$. Instead, we have the following result, which gives the
global counterpart of Theorem \ref{thm:equivsolutions}.

\begin{thm}\cite[Theorem 3.15]{Lazaroiu:2016iav}
\label{thm:equivsolutionsglobal}
For every $u\in \Aut(\Delta)$, the map $\mathbb{A}_{u}$ defines by restriction a bijection:
\begin{equation*}
\mathbb{A}_{u}\colon \Conf_M(\cG,\Delta,\cJ)\to \Conf_M(f_{u \ast}\cG,\Delta,\cJ_u)\, , 
\end{equation*}
	
\noindent
which induces a bijection between the corresponding spaces of solutions:
\begin{equation*}
\mathbb{A}_{u}\colon \Sol_M(\cG,\Delta,\cJ)\to \Sol_M(f_{u \ast}\cG,\Delta,\cJ_u)\, ,
\end{equation*}
	
\noindent
where $f_{u \ast}\cG$ is the push-forward of $\cG$ by $f_u\colon \cM \to \cM$ and $\cJ_u$ is the bundle push-forward of $\cJ$ by $u$.
\end{thm}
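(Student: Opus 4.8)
The plan is to mirror the proof of the local Theorem \ref{thm:equivsolutions}, with the essential simplification that the pair $(f,\mathfrak{A})\in\Diff(V)\times\Sp(2n_v,\mathbb{R})$ is now packaged into the single bundle automorphism $u\in\Aut(\Delta)$, which encodes the base diffeomorphism $f_u$ in its underlying map and the fibrewise symplectic transformation in its action on $\cS$. I would first establish that $\mathbb{A}_u$ restricts to a bijection of configuration spaces and then verify the three field equations one at a time. For the configuration space, $g$ is untouched and $f_u\circ\varphi$ is manifestly a scalar map, so the only point to check is that $u\cdot\cV$ is twisted selfdual for the pushed-forward taming, i.e.\ that $u\cdot\cV\in\Omega^2_+(M,\cS^{f_u\circ\varphi})$ relative to $\cJ_u$. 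Writing $\cJ_u=u\,\cJ\,u^{-1}$ for the bundle push-forward, a fibrewise computation on homogeneous elements $\rho\otimes s$ gives the intertwining relation
\begin{equation*}
\star_{g,\cJ_u^{f_u\circ\varphi}}\circ\,(\id\otimes u)=(\id\otimes u)\circ\star_{g,\cJ^{\varphi}}\, ,
\end{equation*}
since $\ast_g$ touches only the form factor and $u_{\varphi(x)}\cJ_{\varphi(x)}u_{\varphi(x)}^{-1}\cdot u_{\varphi(x)}=u_{\varphi(x)}\cJ_{\varphi(x)}$ on the $\cS$ factor. Hence $u$ carries the $(+1)$-eigenbundle of $\star_{g,\cJ^\varphi}$ isomorphically onto that of $\star_{g,\cJ_u^{f_u\circ\varphi}}$, so $\mathbb{A}_u$ lands in $\Conf_M(f_{u\ast}\cG,\Delta,\cJ_u)$ with inverse $\mathbb{A}_{u^{-1}}$.

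For the Maxwell equation \eqref{eq:GlobalMaxwell} I would use that $u\in\Aut(\Delta)$ preserves $\cD$, i.e.\ $\cD^u=\cD$, equivalently $u$ is $\cD$-parallel; this makes $u$ intertwine the exterior covariant derivatives, $\dd_{\cD^{f_u\circ\varphi}}\circ u=u\circ\dd_{\cD^{\varphi}}$, so $\dd_{\cD^{f_u\circ\varphi}}(u\cdot\cV)=u\cdot\dd_{\cD^{\varphi}}\cV$ vanishes exactly when $\dd_{\cD^{\varphi}}\cV$ does. For the Einstein equation \eqref{eq:GlobalEinstein} the left-hand side depends only on $g$, so it suffices to show the right-hand side is invariant. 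The scalar-metric terms are unchanged because $(f_{u\ast}\cG)^{f_u\circ\varphi}=(f_u\circ\varphi)^\ast(f_u^{-1})^\ast\cG=\varphi^\ast\cG=\cG^\varphi$, and the gauge term is unchanged because, using $\omega^u=\omega$ together with $\cJ_u=u\cJ u^{-1}$, the twisted metric obeys $Q_u(u s_1,u s_2)=\omega(u s_1,u\cJ s_2)=\omega(s_1,\cJ s_2)=Q(s_1,s_2)$; substituting into the defining relation for $\oslash$ then yields $(u\cdot\cV)\oslash_{Q_u}(u\cdot\cV)=\cV\oslash_Q\cV$. Thus \eqref{eq:GlobalEinstein} holds for the image iff it holds for $(g,\varphi,\cV)$.

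The main obstacle is the scalar equation \eqref{eq:GlobalScalar}, since both sides are tensor-valued over $\cM$ and must be shown to transform equivariantly by $df_u$. On the left, $f_u\colon(\cM,\cG)\to(\cM,f_{u\ast}\cG)$ is by construction an isometry, so the harmonic-map tension field obeys the standard naturality $\Tr_g(\nabla\dd(f_u\circ\varphi))=df_u\bigl(\Tr_g(\nabla\dd\varphi)\bigr)$, where the two connections $\nabla$ differ precisely by use of the Levi-Civita connections of $f_{u\ast}\cG$ and of $\cG$. On the right, I would first compute the fundamental form of the pushed-forward structure: since $u$ is $\cD$-parallel, $\Psi_u=\cD\cJ_u=\cD(u\cJ u^{-1})=u\,(\cD\cJ)\,u^{-1}=u\,\Psi\,u^{-1}$, with the cotangent index of $\Psi$ carried along by $df_u$. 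Feeding this, together with $Q_u(u\,\cdot\,,u\,\cdot\,)=Q$, into the twisted exterior pairing $(\ast\cV,\Psi^\varphi\cV)_{g,Q^\varphi}$ shows that the form- and $\cS$-contractions are insensitive to $u$ while the one surviving free index on $\cM$ is transported by $df_u$ (compatibly with $\cG$, which $f_u$ preserves); hence the right-hand side also picks up exactly one factor of $df_u$. Matching the two sides proves that solutions of \eqref{eq:GlobalScalar} are exchanged, and combining this with the Maxwell and Einstein steps shows that $\mathbb{A}_u$ and $\mathbb{A}_{u^{-1}}$ restrict to mutually inverse bijections of the solution spaces. The delicate bookkeeping is concentrated entirely in this last step, where one must keep the parallelism $\cD^u=\cD$ and the isometry property of $f_u$ in play simultaneously.
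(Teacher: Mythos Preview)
Your proposal is correct and follows exactly the strategy one would expect: it globalizes the proof of the local Theorem~\ref{thm:equivsolutions}. Note that the paper itself does not prove Theorem~\ref{thm:equivsolutionsglobal}; it is stated with a citation to \cite[Theorem 3.15]{Lazaroiu:2016iav} and no argument is given in the text. Your approach is precisely the natural one, and it parallels the local proof that the paper does provide: check the twisted self-duality condition to get the bijection on configuration spaces, then verify invariance of the Einstein, Maxwell and scalar equations in turn, using that $u$ preserves $\omega$ and $\cD$ and that $f_u$ is an isometry from $(\cM,\cG)$ to $(\cM,f_{u\ast}\cG)$.

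One small stylistic remark: calling $u$ ``$\cD$-parallel'' is slightly loose, since $u$ is an unbased bundle automorphism rather than a section; the precise statement is the intertwining relation $\cD^u=\cD$, which is exactly what you use. Likewise, in the computation of $\Psi_u$ your parenthetical ``with the cotangent index of $\Psi$ carried along by $df_u$'' is doing real work and deserves to be spelled out: the push-forward of the $\End(\cS)$-valued one-form $\Psi$ involves both conjugation by $u$ on the endomorphism factor and pull-back by $f_u^{-1}$ on the $T^\ast\cM$ factor, and it is the latter that produces the required $df_u$ on the free scalar-manifold index. With that made explicit, the scalar-equation step goes through cleanly.
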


\begin{remark} Since elements in $\Aut(\cS)$ may cover non-trivial
diffeomorphisms of $\cM$, the pull-back/push-forward conditions appearing above must
be dealt with care. More explicitly, define the following action of
$\Aut(\cS)$ on sections of $\cS$:
\begin{equation*}
u\cdot s = u\circ s \circ f_u^{-1} \colon M \to \cS \, , \qquad u\in \Aut(\cS)\, , \qquad s\in \Gamma(\cS)\, .
\end{equation*}
	
\noindent
This action defines an isomorphism of real vector spaces $u\colon \Gamma(\cS) \to \Gamma(\cS)$ for every element $u\in \Aut(\cS)$. We have $\omega^u = \omega$ if and only if:
\begin{equation*}
(\omega^u)(s_1,s_2) \eqdef \omega(u\cdot s_1 , u\cdot s_2) \circ f_u = \omega(s_1,s_2)\, , \quad \forall\,\, s_1 , s_2 \in \Gamma(\cS)\, .
\end{equation*}
	
\noindent
Likewise, $\cD^u = \cD$ if and only if:
\begin{equation*}
\cD^u_{X}(s) \eqdef  u^{-1}\cdot (\cD_{f^{\ast}_u X} (u\cdot s)) = \cD_X (s)\, , \qquad \forall\,\, s\in \Gamma(\cS)\, , \qquad \forall\,\, X\in \mathfrak{X}(\cM)\, ,
\end{equation*}

\noindent
where $f^{\ast}_u X\in \mathfrak{X}(\cM)$ is the pull-back of $X\in\mathfrak{X}(\cM)$ by $f_u\colon \cM\to \cM$. Moreover, the explicit push-forward of $\cJ$ by $u\in \Aut(\cS)$ is given as follows:
\begin{equation*}
\cJ_u(s) \eqdef u\cdot(\cJ(u^{-1}\cdot s)) = u\circ\cJ(u^{-1}\circ s)\, ,  
\end{equation*}

\noindent
for every $s\in \Gamma(S)$.
\end{remark}

\noindent Therefore, $\Aut(\Delta)$ yields the global counterpart of
the \emph{pseudo-duality group} considered in \cite{Hull:1995gk},
which is given by $\Diff(\cM)\times \Sp(2n_v,\mathbb{R})$, and therefore 
differs remarkably from the latter if $\Delta$ is non-trivial. 
Every $u\in \Aut(\Delta)$ maps the configuration and solutions 
spaces of the supergravities associated to $(\cG,\Delta,\cJ)$ to those associated to 
$(f_{u\ast}\cG,\Delta,\cJ_u)$. Denote by $\Aut_b(\Delta) \subset \Aut(\Delta)$ 
the subgroup consisting of automorphisms of $\Aut(\Delta)$ covering the identity. 
We have the short exact sequence:
\begin{equation*}
1 \to \Aut_b(\Delta) \to \Aut(\Delta) \to \Diff_{\Delta}(\cM) \to 1\, ,
\end{equation*}

\noindent
where $\Diff_{\Delta}(\cM)$ is the subgroup of the
orientation-preserving diffeomorphism group of $\cM$ that can be
covered by elements in $\Aut(\Delta)$, which necessarily contains the
identity component of $\Diff(\cM)$. The proof of the following important lemma can be found in \cite{Donaldson}.

\begin{lemma}
\label{lemma:Holfinite}
Let $\Delta$ be a duality structure and $m \in \cM$. We have a canonical isomorphism:
\begin{equation*}
\Aut_b(\Delta) = \mathrm{C}(\mathrm{Hol}_m(\cD), \Aut(S_m,\omega_m))\, ,
\end{equation*}
	
\noindent where $\mathrm{Hol}_m(\cD)$ denotes the holonomy group of
$\cD$ at $m \in \cM$, $\Aut(S_m,\omega_m) \simeq
\mathrm{Sp}(2n_v,\mathbb{R})$ is the automorphism group of the fiber
$(S_m, \omega_m) =(S,\omega)\vert_m$ and
$\mathrm{C}(\mathrm{Hol}_m(\cD), \Aut(S_m,\omega_m)))$ denotes the
centralizer of $\mathrm{Hol}_m(\cD)$ in $\Aut(S_m,\omega_m)$. In
particular, $\Aut_b(\Delta)$ is finite-dimensional.
\end{lemma}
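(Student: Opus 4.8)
The plan is to realize the isomorphism as the restriction of a based automorphism to a single fiber, and to construct its inverse by $\cD$-parallel transport; the entire argument rests on the flatness of $\cD$, and is the concrete form of the standard identification of flat bundles with representations of $\pi_1(\cM)$. Throughout I assume $\cM$ connected, as is implicit in the statement.

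First I would introduce the restriction-to-the-fiber homomorphism
\[
r_m \colon \Aut_b(\Delta) \to \Aut(S_m,\omega_m)\, , \qquad u \mapsto u\vert_{S_m}\, .
\]
Since every $u \in \Aut_b(\Delta)$ covers the identity and satisfies $\omega^u = \omega$, the restriction $u_m \eqdef u\vert_{S_m}$ is a symplectic automorphism of $(S_m,\omega_m)$, and $r_m(u \circ v) = u_m \circ v_m$, so $r_m$ is a group homomorphism. The condition $\cD^u = \cD$, unwound as in the preceding Remark, says precisely that $u$ is $\cD$-parallel, i.e. it commutes with parallel transport along every path: $P_\gamma \circ u_p = u_q \circ P_\gamma$ for every path $\gamma$ from $p$ to $q$. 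Restricting this identity to loops based at $m$ shows that $u_m$ commutes with every holonomy transformation $P_\gamma \in \mathrm{Hol}_m(\cD)$, so $r_m$ takes values in the centralizer $\mathrm{C}(\mathrm{Hol}_m(\cD),\Aut(S_m,\omega_m))$.

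Next I would establish bijectivity. Injectivity is immediate: if $u_m = \id$, then for any $p$ and any path $\gamma$ from $m$ to $p$ the parallel condition gives $u_p = P_\gamma \circ u_m \circ P_\gamma^{-1} = \id$, so $u = \id$ by connectedness. For surjectivity, given $a$ in the centralizer I would set $u_p \eqdef P_\gamma \circ a \circ P_\gamma^{-1}$ for a path $\gamma$ from $m$ to $p$. The crucial point — and the only place the centralizer hypothesis is used — is well-definedness: two such paths $\gamma,\gamma'$ differ by a loop whose holonomy $h \in \mathrm{Hol}_m(\cD)$ satisfies $P_\gamma = P_{\gamma'} \circ h$, whence $P_\gamma \circ a \circ P_\gamma^{-1} = P_{\gamma'} \circ (h \circ a \circ h^{-1}) \circ P_{\gamma'}^{-1} = P_{\gamma'} \circ a \circ P_{\gamma'}^{-1}$, the last equality holding exactly because $a$ commutes with $h$.

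It then remains to check that this $u$ is a genuine element of $\Aut_b(\Delta)$ with $r_m(u) = a$. Smoothness follows by writing $u$ in local $\cD$-flat frames, in which parallel transport is the identity and $u$ is locally constant; $u$ preserves $\omega$ because parallel transport of a symplectic connection is symplectic and $a$ preserves $\omega_m$; and $u$ is $\cD$-parallel, hence $\cD^u = \cD$, since the construction manifestly intertwines parallel transport along any path. Taking $\gamma$ to be the constant path gives $r_m(u) = a$, so $r_m$ is onto. Finally, the finite-dimensionality assertion follows because $\mathrm{C}(\mathrm{Hol}_m(\cD),\Aut(S_m,\omega_m))$ is an intersection of the closed conditions $g h = h g$, $h \in \mathrm{Hol}_m(\cD)$, inside the finite-dimensional Lie group $\Aut(S_m,\omega_m) \simeq \Sp(2n_v,\R)$, hence a closed — therefore finite-dimensional — Lie subgroup, even though $\mathrm{Hol}_m(\cD)$ itself need not be closed. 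The main obstacle is the surjectivity step: the well-definedness of the parallel-transport extension and the verification of its smoothness; everything else is formal.
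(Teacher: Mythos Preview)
Your argument is correct and is the standard parallel-transport proof of this fact. The paper does not actually supply its own proof of this lemma: it simply refers the reader to Donaldson--Kronheimer, \emph{The Geometry of Four-Manifolds}, where the same restriction-to-a-fiber map together with path-independence via the centralizer condition is used; so your approach coincides with the cited one.
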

 
\noindent We now introduce a global counterpart of the local electromagnetic U-duality group
which is traditionally studied in the supergravity literature and was
discussed in Section \ref{sec:localsugra}.

\begin{definition} Let $\Phi = (\cG,\Delta,\cJ)$ be a
scalar-electromagnetic structure on $\cM$. We define the {\bf
electromagnetic U-duality group} $\mathfrak{S}(\Phi)$ of $\Phi$, or
{\bf U-duality group} for short, as the subgroup of $\Aut(\Delta)$
which preserves both the metric $\cG$ and the taming $\cJ$. That is:
\begin{equation*}
\mathfrak{S}(\Phi) \eqdef \left\{ u\in \Aut(\Delta)\,\,\vert\,\, f_{u\ast}\cG=\cG \, , \, \, \cJ^{u} = \cJ \right\}\, ,
\end{equation*} 
	
\noindent
where $\cJ \in \Phi$.
\end{definition}

\begin{remark}
We have $\cJ^{u} = \cJ$ if and only if:
\begin{equation*}
\cJ(u\circ s) = u\circ \cJ(s)\, , \qquad \forall\,\, s\in \Gamma(s)\, , 
\end{equation*}
\noindent
where $\circ$ composition of maps.
\end{remark}

\noindent We denote by $\Aut_b(\Theta) \subset \Aut_b(\Delta) $ the
based automorphisms of $\Delta$, which are vector bundle isomorphisms
covering the identity and preserving both $\Delta$ and $\cJ$. The
U-duality group $\mathfrak{G}$ fits into the following short exact
sequence:
\begin{equation*}
1 \to \Aut_b(\Theta)\to \mathfrak{S}(\Phi)\to \Iso_{\Phi}(\cM,\cG) \to 1\, ,
\end{equation*}
 
\noindent where $\Iso_{\Phi}(\cM,\cG)\subset \Iso(\cM,\cG)$ is the
subgroup of the isometry group of $(\cM,\cG)$ that can be covered by
elements in $\mathfrak{S}(\Phi)$. Since $\Aut_b(\Theta)\subset
\Aut_b(\Delta)$, Lemma \ref{lemma:Holfinite} implies that
$\Aut_b(\Theta)$ is finite-dimensional. Moreover,
$\Iso_{\Phi}(\cM,\cG)$ is well-known to be a finite-dimensional
Lie group, which in turn implies that $\mathfrak{S}(\Phi)$ is
a finite-dimensional Lie group which yields the global counterpart of
the local electromagnetic U-duality group defined in
\eqref{eq:localUduality}. The U-duality group of a
supergravity theory maps solutions of that theory to solutions and thus it can
be used as a \emph{solution generating} mechanism, as the following
corollary of Theorem \ref{thm:equivsolutionsglobal} states.
 
\begin{cor} The U-duality group $\mathfrak{S}(\Phi)$ of the
supergravity theory associated to $\Phi$ preserves $\Sol_M(\Phi)$, that
is, it maps solutions to solutions. In particular, every $u\in
\mathfrak{S}_M(\Phi)$ defines a bijection from $\Sol_M(\Phi)$ to itself.
\end{cor}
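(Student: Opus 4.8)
The plan is to obtain the statement as an immediate specialization of Theorem \ref{thm:equivsolutionsglobal}. That theorem already shows, for an arbitrary $u\in\Aut(\Delta)$, that $\mathbb{A}_u$ restricts to a bijection $\Sol_M(\cG,\Delta,\cJ)\to\Sol_M(f_{u\ast}\cG,\Delta,\cJ_u)$. Hence the whole content of the corollary reduces to observing that when $u$ additionally lies in $\mathfrak{S}(\Phi)$, the target structure $(f_{u\ast}\cG,\Delta,\cJ_u)$ coincides with $\Phi=(\cG,\Delta,\cJ)$, so that $\mathbb{A}_u$ becomes a self-map of $\Sol_M(\Phi)$.

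First I would unwind the two defining conditions of $u\in\mathfrak{S}(\Phi)$. The condition $f_{u\ast}\cG=\cG$ is exactly the equality of scalar metrics needed to match source and target. For the tamings I would reconcile the pull-back condition $\cJ^u=\cJ$ appearing in the definition of $\mathfrak{S}(\Phi)$ with the push-forward $\cJ_u$ appearing in Theorem \ref{thm:equivsolutionsglobal}. Unwinding $\cJ_u(s)=u\cdot(\cJ(u^{-1}\cdot s))$ exhibits $\cJ_u$ as the conjugation $u\circ\cJ\circ u^{-1}$ of the bundle endomorphism $\cJ$ by $u$, whereas the remark after the definition of $\mathfrak{S}(\Phi)$ records that $\cJ^u=\cJ$ is equivalent to $\cJ\circ u=u\circ\cJ$. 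A one-line computation then shows that $\cJ^u=\cJ$, $\cJ_u=\cJ$, and the statement that $u$ commutes with $\cJ$ are all the same condition. Therefore, for $u\in\mathfrak{S}(\Phi)$ we have simultaneously $f_{u\ast}\cG=\cG$ and $\cJ_u=\cJ$, whence $(f_{u\ast}\cG,\Delta,\cJ_u)=\Phi$.

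With this identification in hand, Theorem \ref{thm:equivsolutionsglobal} directly furnishes a bijection $\mathbb{A}_u\colon\Sol_M(\Phi)\to\Sol_M(\Phi)$, proving that $\mathfrak{S}(\Phi)$ maps solutions to solutions. For the final \emph{in particular} clause I would invoke that $\mathbb{A}$ is a left action of $\Aut(\Delta)$ and that $\mathfrak{S}(\Phi)$ is a subgroup: closure under composition and inversion follows because $f_{uv}=f_u\circ f_v$ preserves $\cG$ and because conjugation by a product, or by an inverse, again commutes with $\cJ$. Consequently $u^{-1}\in\mathfrak{S}(\Phi)$, and the action property gives $\mathbb{A}_{u^{-1}}\circ\mathbb{A}_u=\mathbb{A}_{\id}=\id$, so $\mathbb{A}_{u^{-1}}$ is a two-sided inverse of $\mathbb{A}_u$ on $\Sol_M(\Phi)$ and $\mathbb{A}_u$ is a self-bijection.

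I expect no serious obstacle: the substantive analytic work lives entirely in Theorem \ref{thm:equivsolutionsglobal}, and the only point requiring a moment's care is the purely algebraic bookkeeping identifying the pull-back condition $\cJ^u=\cJ$ with the push-forward equality $\cJ_u=\cJ$, together with checking that $\mathfrak{S}(\Phi)$ is genuinely closed under inverses so that the self-map is invertible.
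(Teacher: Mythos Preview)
Your proposal is correct and follows exactly the approach the paper intends: the corollary is stated without proof precisely because it is the immediate specialization of Theorem \ref{thm:equivsolutionsglobal} to $u\in\mathfrak{S}(\Phi)$, and your bookkeeping showing that $\cJ^u=\cJ$, $\cJ_u=\cJ$, and $u\circ\cJ=\cJ\circ u$ are equivalent is the only point that needs to be checked. Your treatment is in fact more detailed than what the paper provides.
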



\subsection{Holonomy trivial duality structure}
\label{sec:holonomytrivial}


In this section we consider the U-duality group in the special case
when the duality structure $\Delta$ is holonomy trivial, that is, when
it admits a global flat symplectic frame. Fixing such a frame $\cE =
(e_1 , \hdots , e_{n_v} , f_1 , \hdots , f_{n_v})$, whose dual coframe
we denote by $\cE^{\ast} = (e^{\ast}_1 , \hdots , e^{\ast}_{n_v} ,
f^{\ast}_1 , \hdots , f^{\ast}_{n_v})$, we can canonically identify
$\Delta$ as follows:
\begin{equation*}
\cS=  \cM\times \mathbb{R}^{2n_v}\, , \qquad \omega = \sum_j f^{\ast}_j \wedge e^{\ast}_j\, , \qquad \cD = \dd \colon \Omega(\cM,\mathbb{R}^{2n_v}) \to \Omega(\cM,\mathbb{R}^{2n_v})\, ,
\end{equation*}

\noindent where $\dd$ denotes the standard exterior derivative acting
on forms taking values on $\mathbb{R}^{2n_v}$. A taming $\cJ\in
\Aut(\cS)$ of $\Delta$ is equivalent through this identification to a
unique smooth taming map:
\begin{equation*} \cJ\colon \cM\to \Aut(\mathbb{R}^{2n_v})\, .
\end{equation*}

\noindent Moreover, $\cE$ yields a canonical identification of the
unbased automorphism group of $\Aut(\cS)$:
\begin{equation*}
\Aut(S) = \Diff(\cM)\times C^{\infty}(\cM,\Aut(\mathbb{R}^{2n_v}))\, , 
\end{equation*}

\noindent
whose action is given by:
\begin{equation*}
(f,\mathfrak{U})(p,v) = (f(p) , \mathfrak{A}(p)(v))\, ,
\end{equation*}

\noindent for every $(p,v)\in \cM\times \mathbb{R}^{2n_v}$ and
$(f,\mathfrak{A}) \in \Aut(S) = \Diff(\cM)\times
C^{\infty}(\cM,\Aut(\mathbb{R}^{2n_v}))$. An element $(f,\mathfrak{A})
\in \Aut(S) $ preserves $\omega$ and $\cD$ if and only if
$\mathfrak{A}$ is constant and belongs to the symplectic group
$\Sp(2n_v,\mathbb{R})\subset \Aut(\mathbb{R}^{2n_v})$ defined as
the stabilizer of $\omega$ in $\Aut(\mathbb{R}^{2n_v})$. Therefore:
\begin{equation*}
\Aut(\Delta) = \Diff(\cM) \times \Sp(2n_v , \mathbb{R})\, ,
\end{equation*}

\noindent which corresponds to the group $\Diff(V)\times
\Sp(2n_v,\mathbb{R})$ considered in section \ref{sec:localsugra}. The
pullback $\cJ^u$ of $\cJ$ by $u = (f,\mathfrak{A})\in \Aut(\Delta)$
reads:
\begin{equation*}
\cJ^{u^{-1}}\vert_p (p,v) = (u\cdot \cJ)\vert_{f^{-1}(p)}(u^{-1}\vert_p\cdot (p,v)) = u\cdot \cJ\vert_{f^{-1}(p)}(f^{-1}(p), \mathfrak{A}^{-1}(v)) = (p, \mathfrak{A}\cJ_{f^{-1}(p)}\mathfrak{A}^{-1}(v))
\end{equation*}

\noindent
Thus an element $(f,\mathfrak{A}) \in \Aut(\Delta)$ preserves $\cJ\colon \cM\to \Aut(\mathbb{R}^{2n_v})$ if and only if:
\begin{equation*}
\mathfrak{A} (\cJ\circ f^{-1}) \mathfrak{A}^{-1} = \cJ\, ,
\end{equation*}

\noindent
which in turn implies that the electromagnetic U-duality group associated to a scalar electromagnetic structure $\Phi = (\cG,\Delta,\cJ)$ is given by:
\begin{equation}
\mathfrak{S}(\Phi) \eqdef \left\{ (f,\mathfrak{A})\in  \Aut(\Delta)\,\, \vert \,\, f_{\ast}\cG = \cG \, , \,\, \mathfrak{A}\, \cJ\, \mathfrak{A}^{-1} = \cJ \circ f \right\}\, ,
\end{equation}

\noindent
which recovers equation \eqref{eq:localUduality}.


\section{Supergravity Killing spinor equations}
\label{sec:KSE}


In the previous sections we discussed the generic bosonic sector of
four-dimensional supergravity, which a priori does not involve
supersymmetry and relies only on a consistent coupling of gravity,
scalars and abelian gauge fields in a manner compatible with electromagnetic
duality. In order to have the complete picture of geometric
supergravity and to showcase the power of supersymmetry, we need to discuss
the \emph{supergravity} Killing spinor equations, which arise from
imposing invariance under supersymmetry transformations on a purely
bosonic solution. For the moment, we denote by $\cB$ the bosonic
fields of a four-dimensional supergravity theory, which we know from Section
\ref{sec:geometricsugra} to consist of Lorentzian metrics, smooth
maps into the scalar manifold of the theory and twisted self-dual
two-forms taking values in the duality bundle, and let us denote by
$\cF$ the corresponding \emph{fermionic fields}. The latter depend
heavily on the specific supergravity theory under consideration. Given
a \emph{supersymmetry parameter} $\varepsilon$, which we can think of
as being a spinor on $M$\footnote{More precisely, it is a section of a
bundle of real or complex Clifford modules of certain type, which is
in general not associated to a spin structure but the more general
notion of \emph{Lipschitz structure} instead, see
\cite{Lazaroiu:2016vov,Lazaroiu:2016nbq} for more details.}, the
\emph{infinitesimal} supersymmetry transformations of $\cB$ and $\cF$
in the direction $\varepsilon$ correspond schematically to an infinitesimal
transformation of the form:
\begin{equation*}
\delta_{\varepsilon} \cB= \cF(\varepsilon)\, , \qquad \delta_{\varepsilon} \cF= \cB(\varepsilon)
\end{equation*}

\noindent where the right hand side depends linearly on
$\varepsilon$. A solution $(\cB,\cF)$ of four-dimensional supergravity
is said to be \emph{supersymmetric} if it is invariant under such an infinitesimal transformation, i.e.:
\begin{equation}
\label{eq:susysch1}
\delta_{\varepsilon} \cB= \cF(\varepsilon) = 0\, , \qquad \delta_{\varepsilon} \cF= \cB(\varepsilon) = 0\, .
\end{equation}

\noindent To the best of our knowledge, there is no fully general and
mathematically rigorous formulation of these transformations which
could serve to give the basis of a mathematical theory of supergravity
including its complete fermionic sector and supersymmetry
transformations. As disappointing as this may seem, what is important
to us is that if we restrict the previous transformations to a purely
bosonic background, that is, if we set $\cF=0$, \eqref{eq:susysch1}
reduces to an expression of the form:
\begin{equation}
\label{eq:susysch2}
\delta_{\varepsilon} \cF= \cB(\varepsilon) = 0\, ,
\end{equation}

\noindent which is expected to admit a rigorous mathematical
formulation using the tools of mathematical gauge theory and global
differential geometry and analysis. In the case of four-dimensional ungauged supergravity, equation \eqref{eq:susysch2} yields a system of partial differential equations for a metric $g$, a
scalar map $\varphi$ and a twisted self-dual two-form $\cV$ coupled to
a spinor $\varepsilon$. In agreement with the terminology introduced
earlier, a bosonic solution $\cB$ is then said to be supersymmetric if
Equation \eqref{eq:susysch2} holds. The spinorial equations arising
from $\delta_{\varepsilon} \cF = 0$ are always of the type:
\begin{equation*}
\mathfrak{D}_{\cB}\varepsilon = 0\, , \qquad \mathfrak{Q}_{\cB}(\varepsilon) = 0\, ,
\end{equation*}

\noindent where $\varepsilon\in \Gamma(S)$ is a section of an
appropriate bundle of real or complex Clifford modules over the
underlying manifold $M$, $\mathfrak{D}_{\cB}$ is a connection on $S$
depending on $\cB$ and $\mathfrak{Q}_{\cB}\in \Gamma(End(S))$ is an
endomorphism of $S$ depending also on $\cB$. We note that the
mathematical theory of supergravity Killing spinor equations is far
from being established, so in the following we will content ourselves
with presenting some particular examples where such mathematical
formulation does exist, see \cite{Cortes:2018lan} for more
details. The main difficulty in developing the mathematical theory of
supergravity Killing spinor equations resides in giving global
mathematical sense to the local formulas available in the supergravity
physics literature for $\mathfrak{D}_{\cB}$ and
$\mathfrak{Q}_{\cB}(\varepsilon)$, which involve state of the art
geometric structures subtly coupled through supersymmetry.
 

\subsection{Pure (AdS) $\cN=1$ supergravity}


We fix an oriented Lorentzian spin four-manifold, which for simplicity
in the exposition we will assume to satisfy $H^1(M,\mathbb{Z}_2) = 0$
(so the spin structure is unique up to isomorphism). For
every Lorentzian metric $g$ on $M$, we denote by $S_g$ the unique
(modulo isomorphism) bundle of irreducible real Clifford modules over
the bundle of Clifford algebras $\Cl(M,g)$ of $(M,g)$. Pure (AdS)
$\cN=1$ supergravity is the simplest four-dimensional supergravity
theory. The scalar manifold consists of a point and the duality
structure is trivial of zero rank. The scalar potential is
constant. The bosonic \emph{matter content} of the theory, that is,
its configuration space, consists therefore simply of a Lorentzian
metric and the theory admits the following action functional
\cite[Chapter 5]{Ortin}:
\begin{equation*}
\mathfrak{S}[g]=\int_U \left[\mathrm{R}_{g} + 6 \lambda^2\right] \vol_g\, .
\end{equation*}

\noindent
The partial differential equations associated to the variational problem of the previous functional are:
\begin{equation*}
\mathrm{Ric}(g) =  - 3 \lambda^2\, g\, .
\end{equation*}

\noindent Therefore, bosonic pure AdS $\cN=1$ supergravity is given by
Einstein's theory of gravity coupled to a non positive cosmological
constant. The Killing spinor equations of the theory read
\cite[Chapter 5]{Ortin}:
\begin{equation}
\label{eq:KSEPureAdS}
\nabla^g_v \varepsilon = \frac{\lambda}{2}v\cdot \varepsilon\, , \qquad \forall\,\, v\in \mathfrak{X}(M)\, ,
\end{equation}

\noindent for a real spinor $\varepsilon\in
\Gamma(S_g)$. Consequently, an Einstein metric $g$ with Einstein
constant $-3\lambda^2$ is a supersymmetric solution of $\cN=1$ pure
AdS supergravity if and only if $(M,g)$ admits a spinor $\varepsilon$
satisfying \eqref{eq:KSEPureAdS}. Hence, the set of supersymmetric solutions of pure (AdS)
$\cN=1$ supergravity is given by:
\begin{equation*}
\mathrm{Sol}_{S}(M,\lambda) = \left\{ (g,\epsilon)\,\, \vert \,\, \mathrm{Ric}(g) =  - 3 \lambda^2\, g\, , \,\, \,\, \nabla^g_v \varepsilon = \frac{\lambda}{2}v\cdot \varepsilon\, , \,\, \forall v\in \mathfrak{X}(M) \right\}\, .
\end{equation*}

\noindent Equation \eqref{eq:KSEPureAdS} is a particular case of a
\emph{real} Killing spinor equation on a Lorentzian four-manifold, and
has been studied in \cite{Leitner,Cortes:2019xmk}. Reference
\cite{Leitner} proves that an oriented and spin Lorentzian
four-manifold carrying a solution of \eqref{eq:KSEPureAdS} such that
$\lambda \neq 0$ is locally conformally a Brinkmann space-time. On the
other hand, Reference \cite{Cortes:2019xmk} proves the following
global result.

\begin{thm}\cite[Theorem 5.3]{Cortes:2019xmk}
\label{thm:RealKSE} $(M,g)$ admits a nontrivial real Killing spinor
with Killing constant $\frac{\lambda}{2}$ if and only if it admits a
pair of orthogonal one-forms $u,l\in \Omega^1(M)$ with $u$ lightlike
and $l$ of positive unit norm satisfying:
\begin{equation*}
\nabla^g u = \lambda\, u\wedge l\, , \qquad  \nabla^g l = \kappa\otimes u + \lambda(l\otimes l - g)\, ,
\end{equation*}

\noindent for some $\kappa\in \Omega^1(M)$. In this case,
$u^{\sharp}\in \mathfrak{X}(M)$ is a Killing vector field with
geodesic integral curves.
\end{thm}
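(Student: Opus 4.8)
The plan is to prove both implications through the dictionary between the real Killing spinor $\varepsilon$ and its \emph{spinor bilinears}, viewing $u$ and $l$ as the low-degree components of the polyform associated to $\varepsilon\otimes\varepsilon$ via the admissible pairing $\mathcal{B}$ on the bundle $S_g$ of irreducible real Clifford modules in the given Lorentzian signature. The forward direction is then a bilinear computation, while the converse amounts to reconstructing a spinor from its square.

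For the forward direction I would first define, with no differential input, the degree-one bilinear $u(X)=\mathcal{B}(\varepsilon,X\cdot\varepsilon)$ and the degree-two bilinear $\Omega(X,Y)=\mathcal{B}(\varepsilon,(X\wedge Y)\cdot\varepsilon)$. The key algebraic input is the family of Fierz identities expressing that $\varepsilon\otimes\varepsilon$ is a rank-one endomorphism: these force $u$ to be lightlike and $\Omega$ to be decomposable, so that $\Omega=u\wedge l$ for a spacelike one-form $l$ orthogonal to $u$, which can be normalized to $g(l,l)=1$. This produces the pair $(u,l)$ with exactly the stated pointwise properties.

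Next I would differentiate. Using metric compatibility of $\nabla^g$, parallelism of Clifford multiplication and the Killing spinor equation \eqref{eq:KSEPureAdS}, one finds $(\nabla^g_X u)(Y)=\frac{\lambda}{2}\left[\mathcal{B}(X\cdot\varepsilon,Y\cdot\varepsilon)+\mathcal{B}(\varepsilon,Y\cdot X\cdot\varepsilon)\right]$, which collapses through the Clifford relation $X\cdot Y+Y\cdot X=-2g(X,Y)$ and the symmetry type of $\mathcal{B}$ to a multiple of $\Omega(X,Y)=(u\wedge l)(X,Y)$, giving $\nabla^g u=\lambda\, u\wedge l$ after fixing conventions. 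Differentiating $\Omega$ in the same way and isolating $l$ produces $\nabla^g l=\kappa\otimes u+\lambda(l\otimes l-g)$; here $\kappa$ is not canonical, reflecting that $l$ is determined by $\varepsilon$ only modulo addition of a multiple of $u$ (the conditions $\Omega=u\wedge l$, $g(l,l)=1$, $g(u,l)=0$ are all invariant under $l\mapsto l+c\,u$ because $u$ is null). The Killing and geodesic claims are then immediate: skew-symmetry of $u\wedge l$ shows $\nabla^g u$ is antisymmetric, so $u^{\sharp}$ is Killing, and contracting the first equation with $u^{\sharp}$ gives $\nabla^g_{u^{\sharp}}u=\lambda(u(u^{\sharp})\,l-l(u^{\sharp})\,u)=0$, since $u(u^{\sharp})=g(u^{\sharp},u^{\sharp})=0$ and $l(u^{\sharp})=0$, whence the integral curves are geodesic.

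For the converse I would run the correspondence backwards: assemble the polyform whose prescribed bilinears have degree-one part $u$ and degree-two part $u\wedge l$, and show that, precisely because $u$ is null, $l$ is spacelike of unit norm and $u\perp l$, this polyform satisfies the algebraic \emph{spinor-square} (Fierz) constraints characterizing elements of the form $\varepsilon\otimes\varepsilon$; reconstructing a real spinor from its square then yields $\varepsilon\in\Gamma(S_g)$, unique up to sign, and one checks that the displayed differential system is equivalent to \eqref{eq:KSEPureAdS} for this $\varepsilon$. I expect the main obstacle to be exactly this converse: verifying that the tensorial conditions on $(u,l)$ are not merely necessary but \emph{sufficient}, i.e. that they force \emph{all} the rank-one Fierz constraints (including those in higher degree, not only the ones visible in degrees one and two) and that the two differential equations reassemble into the single spinorial equation with no leftover integrability condition. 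Careful bookkeeping of the signature-dependent signs entering $\mathcal{B}$ and the Clifford relations, in both directions, will be the most delicate part of the argument.
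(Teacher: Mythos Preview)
The paper does not contain a proof of this theorem: it is stated with an explicit citation to \cite[Theorem 5.3]{Cortes:2019xmk} and no argument is given in the text beyond the subsequent remark. There is therefore no ``paper's own proof'' to compare against here.

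That said, your strategy is precisely the methodology of the cited reference \cite{Cortes:2019xmk}, whose very title (\emph{Spinors of real type as polyforms and the generalized Killing equation}) advertises the polyform/bilinear dictionary you describe: one identifies the square $\varepsilon\otimes\varepsilon$ with a polyform via the admissible pairing, extracts $u$ and $\Omega=u\wedge l$ as its low-degree components, uses the Fierz (rank-one) constraints for the algebraic properties, and differentiates through the Killing spinor equation for the displayed system. Your identification of the converse as the delicate step---checking that the tensorial data on $(u,l)$ exhaust the spinor-square constraints so that a real spinor can be reconstructed---is also the correct emphasis, and is exactly the kind of result that \cite{Cortes:2019xmk} establishes in generality before specializing to this four-dimensional Lorentzian case.
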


\begin{remark} Theorem \ref{thm:RealKSE} immediately implies that
$\kappa$ is closed if and only if Leitner's result holds with respect
to $u$, that is, if and only if every such $(M,g)$ is locally
conformally Brinkmann with respect to $u$. We have not been able to
prove that $\kappa$ is necessarily closed.
\end{remark}
 
\noindent Of course, when $\lambda = 0$ equation \eqref{eq:KSEPureAdS}
reduces to the condition of $\varepsilon$ being a parallel spinor,
which has been extensively studied both in the mathematics and physics
literature, see for example \cite{Leistner} and references therein. To
the best knowledge of the authors, the differential topology of
globally hyperbolic Lorentzian manifolds carrying a solution of
\eqref{eq:KSEPureAdS} has not been investigated in the literature. We
believe that the global characterization provided by Theorem
\ref{thm:RealKSE} is a convenient starting point for such a study.


\subsection{Chiral $\cN=1$ supergravity with constant scalar map and superpotential}


We fix an oriented and spin Lorentzian four-manifold, which we 
assume to satisfy $H^1(M,\mathbb{Z}_2) = 0$ for the same reasons as in
the previous section. For every Lorentzian metric $g$ on $M$, we
denote by $\$_g$ the unique (modulo isomorphism) bundle of irreducible
complex Clifford modules over $\Cl(M,g)$. The Lorentzian volume form
of $(M,g)$ is denoted by $\nu$, while the complex volume form is
denote by $\nu_{\C} = i \nu$. We have:
\begin{eqnarray}
\nu_{\C}^2 = 1\, , 
\end{eqnarray}

\noindent
Therefore, the complex spinor bundle splits as a sum of chiral bundles:
\begin{equation*}
\$_g = \$^{-}_g \oplus \$^{+}_g\, ,
\end{equation*}

\noindent
where the superscript denotes the chirality. To describe chiral
$\cN=1$ supergravity with constant scalar map and superpotential (see
\cite{FreedmanProeyen,Ortin} for more details) we take the scalar
manifold to be a point and the duality structure to be trivial of rank
zero. Under these assumptions, it can be shown that the theory admits
an action functional given by:
\begin{equation*}
\mathfrak{S}[g]=\int_U \left[\mathrm{R}_{g} + 24 \vert w\vert^2\right] \vol_g\, ,
\end{equation*}

\noindent where $w\in \mathbb{C}$ is a complex constant corresponding
to the superpotential and $U\subset M$ is any relatively compact open
set. The equations of motion associated to the previous functional
read:
\begin{equation*}
\mathrm{Ric}(g) = - 12 \vert w\vert^2 g \, ,
\end{equation*}

\noindent which are the standard Einstein equations coupled to a
non positive cosmological constant. Using the chiral
splitting $\$_g = \$^{-}_g \oplus \$^{+}_g$, we define for every $w\in
\mathbb{C}$ the following morphism of complex vector bundles:
\begin{equation*}
T_w\colon \Omega^{0}(\$_g) \to \Omega^1(\$_g) \, , \qquad T_w(\epsilon_1\oplus \epsilon_{2})(v) = \gamma(v)(w \epsilon_1\oplus \bar{w} \epsilon_{2})\, ,
\end{equation*}

\noindent
where $\gamma\colon \Cl(M,g)\to \End(\$_g)$ denotes Clifford
multiplication. In terms of $T_w\colon \Omega^{0}(\$_g) \to
\Omega^1(\$_g)$ the Killing spinor equations of the theory are given
by:
\begin{equation}
\label{eq:complexKSE1}
\nabla^{g}\varepsilon = T_{w}(\varepsilon)\, , \qquad \mathfrak{c}(\varepsilon) = \varepsilon\, ,
\end{equation}

\noindent
where $\nabla^g$ denotes the lift of the Levi-Civita connection to the spinor bundle and:
\begin{equation*}
\mathfrak{c}\colon \$_g \to \$_g\, 
\end{equation*}

\noindent denotes the canonical complex-conjugate and spin-equivariant
automorphism of the complex spinor bundle $\$_g$ (see
\cite{Cortes:2018lan} for more details). Therefore, the set
$\mathrm{Sol}_{S}(M)$ of supersymmetric solutions on $M$ consists on
pairs $(g,\epsilon)$, with $g$ a Lorentzian metric and $\epsilon$
chiral spinor, such that:
\begin{equation*}
\mathrm{Sol}_{S}(M) = \left\{ (g,\epsilon)\,\, \vert \,\, \mathrm{Ric}(g) =  - 12 \vert w\vert^2\, g\, , \,\, \,\, \nabla^g \epsilon = T_w(\epsilon) \, , \,\,\,\, \mathfrak{c}(\varepsilon) = \varepsilon\, , \,\, \forall v\in \mathfrak{X}(M) \right\}\,\, .
\end{equation*}

\noindent
It is important to point out that Equation \eqref{eq:complexKSE1} does
not correspond to a \emph{standard} Killing spinor equation (for
neither real nor imaginary Killing spinors) unless $w$ is real, due to
the fact that the endomorphism $T_w$ involves the complex conjugate of
$w$. This in turn implies that the number that occurs as the Einstein constant of the corresponding
integrability condition is actually $|w|^2$. This allows $w$ to be any complex
number instead of only real or purely imaginary. To the best of our
knowledge, the globally hyperbolic Lorentzian four-manifolds that
admit supersymmetric solutions to this supergravity theory have not
been investigated in the literature.


\subsection{Chiral $\cN=1$ supergravity with vanishing superpotential}


We fix an oriented Lorentzian $\Spin^c(3,1)$ four-manifold, which for
simplicity of exposition we will assume to satisfy
$H^2(M,\mathbb{Z}) = 0$ (so that isomorphism classes of spin$^c$ structures
on $M$ are unique). For every Lorentzian metric $g$ on $M$, we denote
by $\$_g$ the unique (modulo isomorphism) bundle of irreducible
complex Clifford modules over $\Cl(M,g)$.  As before, the complex
spinor bundle splits as a sum of chiral bundles:
\begin{equation*}
\$_g = \$^{-}_g \oplus \$^{+}_g\, ,
\end{equation*}

\noindent where the superscript denotes chirality. The scalar manifold
of $\cN=1$ supergravity with vanishing superpotential and with trivial
duality structure of rank zero is a complex manifold $\cM$ equipped
with a negative Hermitian holomorphic line bundle $(\cL,\cH)$ with
Hermitian structure $\cH$. The Riemannian metric $\cG$ occurring in
the non-linear sigma model of four-dimensional supergravity is given
by the metric induced by the curvature of the Chern connection of
$(\cL,\cH)$, see \cite{Bagger:1983tt,Cortes:2018lan} for more
details. Such $\cN=1$ supergravity admits a Lagrangian formulation
with Lagrangian given by:
\begin{equation*}
\mathfrak{Lag}[g,\varphi]= \mathrm{R}_{g} - \vert\dd \varphi \vert^2_{\cG, g} \, , 
\end{equation*}

\noindent
for pairs $(g,\varphi)$ consisting of Lorentzian metrics $g$ and scalar maps $\varphi \colon M \to \cM$. As it is standard in the theory of harmonic maps (or wave maps), we consider:
\begin{equation*}
\dd\varphi \in \ \Omega^1(M,T\cM^{\varphi})\, ,
\end{equation*}

\noindent as a one-form on $M$ taking values in the pullback of
$T\cM^{\varphi}$ by $\varphi$. Therefore, the theory reduces to
Einstein gravity coupled to a non-linear sigma model with target space
given by the complex manifold $\cM$ equipped with the K\"ahler metric
defined by the curvature of the Chern connection of $(\cL,\cH)$. The
Killing spinor equations are given by:
\begin{equation*}
\nabla^\varphi \epsilon = 0\, , \qquad \dd \varphi^{0,1}\cdot\epsilon = 0\, ,
\end{equation*}
 
\noindent where $\nabla^\varphi\colon \Gamma(\$_g) \to \Gamma(\$_g)$
is the canonical lift of the Levi-Civita connection on $(M,g)$
together with the pull-back of the Chern connection
on $(\cL,\cH)$ by $\varphi$. Therefore, the set
$\mathrm{Conf}_{S}(M,\cM,\cL,\cH)$ of supersymmetric configurations on
$M$ consists on triples $(g,\varphi,\epsilon)$, with $g$ a Lorentzian
metric, $\varphi \colon M\to \cM$ a scalar map and $\epsilon$ a chiral
spinor, such that:
\begin{equation*}
\mathrm{Conf}_{S}(M,\cM,\cL,\cH) = \left\{ (g,\varphi,\epsilon)\,\, \vert \,\, \nabla^\varphi \epsilon = 0\, , \,\,  \dd \varphi^{0,1}\cdot\epsilon = 0 \right\}\, .
\end{equation*}

\noindent
Lorentzian manifolds $(M,g)$ admitting a solution
$(g,\varphi,\epsilon)$ to the Killing spinor equations stated above are
particular instances of Lorentzian $\Spin^c(3,1)$ manifolds admitting
parallel spinors. Simply connected and geodesically complete
Lorentzian manifolds admitting spin-c parallel spinors have been studied and classified in
the literature, see reference \cite{Moroianu} for the Riemannian case
and Reference \cite{Ikemakhen} for the pseudo-Riemannian case. It
cannot be expected a priori that every $\Spin^{c}(3,1)$ Lorentzian
four-manifold which admits a parallel spinor also admits a solution to
the above Killing spinor equations. Adapting the main Theorem of
\cite{Ikemakhen} to our situation we obtain the following result.

\begin{prop}
	\label{prop:candidates} Let $M$ be a simply-connected and
geodesically complete Lorentzian four-manifold admitting a
supersymmetric solution $(g,\varphi,\epsilon)$ of $\cN=1$ chiral
supergravity with vanishing superpotential. Then, one of the following
holds:
	
	\
	
	\begin{enumerate}
		\item $(M,g)$ is isometric to four-dimensional flat
Minkowski space.
		
		\
		
		\item $(M,g)$ is isometric to $(M,g) \simeq
(\R^2\times X, \eta_{1,1}\times h)$, where $\eta_{1,1}$ is the flat
two-dimensional Minkowski metric and $X$ is a Riemann surface equipped
with a K\"ahler metric $h$.
		
		\
		
		\item The holonomy group $H$ of $(M,g)$ is a subgroup
of $\SO(2)\ltimes \mathbb{R}^2 \subset \SO_0(3,1)$, where $\SO(2)\ltimes \mathbb{R}^2$ is the stabilizer of a null vector in $\mathbb{R}^4$.
	\end{enumerate}
\end{prop}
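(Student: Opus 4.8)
The plan is to reduce the statement to the holonomy classification of Lorentzian $\Spin^c(3,1)$ four-manifolds carrying a parallel spinor, and then to translate each admissible holonomy reduction into one of the three geometric normal forms. The starting observation is that the first Killing spinor equation $\nabla^{\varphi}\epsilon=0$ exhibits $\epsilon$ as a nowhere-vanishing parallel section of the irreducible complex Clifford module bundle $\$_g$ for the connection $\nabla^{\varphi}$, which is the tensor product of the spinorial lift of the Levi-Civita connection of $(M,g)$ with the pull-back $\varphi^{\ast}$ of the Chern connection of $(\cL,\cH)$. Hence $(M,g)$, equipped with the $\Spin^c(3,1)$ structure determined by $\varphi^{\ast}\cL$, is a Lorentzian four-manifold admitting a parallel spinor, and the problem becomes the determination of its holonomy.

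First I would use that $M$ is simply connected to conclude that $\Hol(g)\subset \SO_0(3,1)$ is connected and agrees with the restricted holonomy group. Since $(M,g)$ is moreover geodesically complete, the de Rham--Wu decomposition theorem applies and splits $(M,g)$ isometrically as a product of indecomposable factors corresponding to the decomposition of the holonomy representation on a tangent space $T_mM$. The parallel spinor $\epsilon$ enters through the integrability condition for $\nabla^{\varphi}$, which forces the Riemann curvature of $g$ together with the curvature of $\varphi^{\ast}\cL$, acting by Clifford multiplication, to annihilate $\epsilon$; this is exactly the situation governed by Ikemakhen's classification \cite{Ikemakhen}. It is also the device that excludes the irreducible alternative: the full group $\SO_0(3,1)\simeq \SL(2,\mathbb{C})/\mathbb{Z}_2$ preserves no ray in the Dirac module, so no parallel spinor can exist when the holonomy acts irreducibly, even after the $\U(1)$ twist coming from $\varphi^{\ast}\cL$.

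The case analysis then follows the three possibilities for the holonomy representation. If $\Hol(g)$ is trivial then $(M,g)$ is flat, and being simply connected and complete it is isometric to Minkowski space, which is alternative (1). If the representation is reducible with a non-degenerate invariant subspace, the Wu splitting yields a product in which the indecomposable factors have dimension at most three; the two-dimensional Lorentzian factors are flat, because a parallel spinor forces their $\SO_0(1,1)$ holonomy to be trivial, while a two-dimensional Riemannian factor is an oriented surface $X$ whose induced metric $h$ is automatically K\"ahler (its K\"ahler form is a top-degree form, hence closed). The non-flat decomposable case thus reads $(M,g)\simeq(\R^2\times X,\eta_{1,1}\times h)$, alternative (2), while any remaining one- or three-dimensional indecomposable Lorentzian factor collapses into alternatives (1) or (3). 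Finally, if the holonomy is indecomposable but not irreducible it must leave invariant a null line, which by the structure of Lorentzian holonomy in dimension four pins it down to a subgroup of $\SO(2)\ltimes\mathbb{R}^2$, the stabilizer of a null vector, giving alternative (3).

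The hard part will be the representation-theoretic input borrowed from \cite{Ikemakhen}: one must verify that in the $\Spin^c(3,1)$ setting the only connected subgroups of $\SO_0(3,1)$ admitting a fixed spinor in the twisted Dirac module are precisely those appearing in this trichotomy, and in particular that the line-bundle twist $\varphi^{\ast}\cL$ does not enlarge the list of Lorentzian holonomies beyond the classical spin case. Once this is in place, reading off the stated geometric normal forms from the de Rham--Wu splitting is routine.
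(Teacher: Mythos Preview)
Your proposal is correct and matches the paper's approach: the paper does not spell out a proof but simply states that the proposition is obtained by ``adapting the main Theorem of \cite{Ikemakhen} to our situation,'' which is precisely your strategy of recognizing $\nabla^{\varphi}\epsilon=0$ as a parallel $\Spin^c$ spinor condition and invoking Ikemakhen's holonomy classification together with the de~Rham--Wu splitting. In fact your sketch is more detailed than what the paper provides, since you make explicit the trichotomy (trivial holonomy, non-degenerate reducible, indecomposable with invariant null line) and the exclusion of the irreducible case via the absence of fixed rays in the Dirac module.
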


\noindent Therefore, every geodesically complete and simply connected
supersymmetric solution must be of the form described by the previous
proposition. However, the converse need not be true, since
a supersymmetric solution requires $(M,g)$ to admit a parallel spinor
with respect to the specific connection $\nabla^{\varphi}$, which is
coupled to the scalar map $\varphi$, which is in turn required to
satisfy its corresponding Killing spinor equation. To the best of our
knowledge, the problem of classifying globally hyperbolic Lorentzian
four-manifolds carrying supersymmetric solutions of this supergravity
theory is currently open.





\begin{ack}
The work of C. I. L. was supported by grant IBS-R003-S1. The work of C.S.S. is supported by the Deutsche Excellence Strategy – EXC 2121 “Quantum Universe” - 390833306.
\end{ack}


\appendix


\end{document}